\documentclass[reqno]{amsart}
\usepackage[margin=1.15in]{geometry}
\usepackage{hyperref}
\usepackage{cite}
\usepackage{enumitem}

\theoremstyle{plain}
\newtheorem{theorem}{Theorem}
\newtheorem{proposition}[theorem]{Proposition}
\newtheorem{corollary}[theorem]{Corollary}
\newtheorem{lemma}[theorem]{Lemma}

\theoremstyle{definition}
\newtheorem{definition}{Definition}

\theoremstyle{remark}
\newtheorem{remark}{Remark}

\numberwithin{equation}{section}
\allowdisplaybreaks

\begin{document}

\author{Nhat Vy Huynh}
\address{Nhat Vy Huynh\newline
	Faculty of Mathematics and Computer Science, University of Science, Ho Chi Minh City, Vietnam;\newline
	Vietnam National University, Ho Chi Minh City, Vietnam;\newline
	Institute of Information Technology and Electrical, Electronics Engineering, University of Transport Ho Chi Minh City, Ho Chi Minh City, Vietnam}
\email{vy.huynh@ut.edu.vn}

\author{Phuong Le}
\address{Phuong Le (corresponding author, ORCID: 0000-0003-4724-7118)\newline
	Faculty of Economic Mathematics, University of Economics and Law, Ho Chi Minh City, Vietnam;\newline
	Vietnam National University, Ho Chi Minh City, Vietnam}
\email{phuongl@uel.edu.vn}

\subjclass[2020]{35J92, 35B53, 35B35, 35B08, 35B33, 35J75}
\keywords{Anisotropic operators, Finsler $p$-Laplacian, stable solutions, solutions stable outside a compact set, finite Morse index, Liouville theorems, negative exponent, exponential nonlinearity}

\title[Stability outside a compact set for anisotropic equations]{Solutions stable outside a compact set for anisotropic quasilinear equations with negative exponent or exponential nonlinearity}

\begin{abstract}
We study the equations $\Delta_p^H u=u^{-q}$, $u>0$, and $-\Delta_p^H u=e^u$ in $\mathbb{R}^N$, where $\Delta_p^H u=\operatorname{div}(H(\nabla u)^{p-1}\nabla H(\nabla u))$ is the Finsler $p$-Laplacian associated with a uniformly convex norm $H$, and $p\ge2$. We prove nonexistence of solutions stable outside a compact set for the negative-power equation when $p\le N<\frac{p(p+3)}{p-1}$ and $q$ lies above the sharp critical threshold, and for the exponential equation when $p<N<\frac{p(p+3)}{p-1}$. Consequently, neither equation admits solutions of finite Morse index in these ranges. In the borderline case $N=p$, we prove that the solutions of $-\Delta_N^H u=e^u$ that are stable outside a compact set are precisely the finite-mass solutions classified by Ciraolo and Li. A main ingredient is independent of stability: if $N>q\tau$, where $\tau=\frac{p}{q+p-1}$, a positive exterior solution of $\Delta_p^H u=u^{-q}$ cannot remain above $\theta$ times the singular solution near infinity for any $\theta>1$. An analogous obstruction holds for the exponential equation. Both obstructions are proved by comparison with explicit radial barriers having frozen right-hand sides. We also prove sharpness when $N>p$, and for the negative-power equation when $N=p>2$. Finally, for operators $\operatorname{div}(B'(H(\nabla u))\nabla H(\nabla u))$ with $p$-growth, we establish the integral estimates needed for the stability arguments with fully justified truncation and absorption procedures. These arguments also repair gaps in earlier proofs for the corresponding isotropic and anisotropic problems.
\end{abstract}

\maketitle

\section{Introduction}

We study positive solutions of
\begin{equation}\label{eq:sing}
\Delta_B^H u=u^{-q},\qquad u>0\qquad\text{in }\mathbb{R}^N,
\end{equation}
with $q>0$, and solutions of the anisotropic Liouville equation
\begin{equation}\label{eq:exp}
-\Delta_B^H u=e^u\qquad\text{in }\mathbb{R}^N,
\end{equation}
where $N\ge2$ and
\[
\Delta_B^H u=\operatorname{div}\big(B'(H(\nabla u))\nabla H(\nabla u)\big).
\]
We assume that $H$ is a uniformly convex Finsler norm, that is,
\begin{enumerate}[label=(H)]
\item $H$ is a norm on $\mathbb{R}^N$ of class $C^2(\mathbb{R}^N\setminus\{0\})$, and the set $\{\xi\in\mathbb{R}^N:H(\xi)<1\}$ is uniformly convex,
\end{enumerate}
and that $B$ has the following growth.
\begin{enumerate}[label=(B)]
\item $B\in C^2((0,\infty))\cap C^1([0,\infty))$, $B(0)=B'(0)=0$, and there are $p\ge2$, $\kappa\in[0,1]$ and $0<\gamma\le\Gamma$ such that, for all $t>0$,
\[
\gamma(\kappa+t)^{p-2}t\le B'(t)\le\Gamma(\kappa+t)^{p-2}t,\qquad
\gamma(p-1)(\kappa+t)^{p-2}\le B''(t)\le\Gamma(p-1)(\kappa+t)^{p-2}.
\]
\end{enumerate}
The model case is $B(t)=t^p/p$, for which $\Delta_B^H$ is the Finsler $p$-Laplacian $\Delta_p^Hu=\operatorname{div}(H(\nabla u)^{p-1}\nabla H(\nabla u))$. When, moreover, $H(\xi)=|\xi|$, it is the $p$-Laplacian. Throughout we write
\[
\lambda:=\frac{\Gamma}{\gamma}(p-1),\qquad
\alpha_0:=-1-\frac{2q+2\sqrt{q(q+\lambda)}}{\lambda},\qquad
\tau:=\frac{p}{q+p-1}.
\]

A function $u\in W^{1,p}_{\rm loc}(\mathbb{R}^N)$ is a weak solution of \eqref{eq:sing} if $u>0$ a.e., $u^{-q}\in L^1_{\rm loc}(\mathbb{R}^N)$ and
\[
\int_{\mathbb{R}^N}B'(H(\nabla u))\langle\nabla H(\nabla u),\nabla\varphi\rangle=-\int_{\mathbb{R}^N}u^{-q}\varphi\qquad\text{for all }\varphi\in C^1_c(\mathbb{R}^N),
\]
and a weak solution of \eqref{eq:exp} if $e^u\in L^1_{\rm loc}(\mathbb{R}^N)$ and the same identity holds with $-u^{-q}$ replaced by $e^u$. Here and below the vector field $B'(H(\xi))\nabla H(\xi)$ is set equal to $0$ at $\xi=0$. Both equations have the form $-\Delta_B^Hu=g(u)$ with $g$ increasing, namely $g(t)=-t^{-q}$ and $g(t)=e^t$, and their linearized operators lead to the quadratic form
\[
Q_u(\varphi):=\int_{\mathbb{R}^N}B''(H(\nabla u))\langle\nabla H(\nabla u),\nabla\varphi\rangle^2+\int_{\mathbb{R}^N}B'(H(\nabla u))\langle D^2H(\nabla u)\nabla\varphi,\nabla\varphi\rangle .
\]
The matrix $\mathcal M(\xi):=B''(H(\xi))\nabla H(\xi)\otimes\nabla H(\xi)+B'(H(\xi))D^2H(\xi)$ is the Hessian of $B\circ H$ at $\xi\ne0$, and $Q_u(\varphi)=\int\langle\mathcal M(\nabla u)\nabla\varphi,\nabla\varphi\rangle$. At $\xi=0$, if $\kappa>0$ we fix any symmetric positive-semidefinite value satisfying $|\mathcal M(0)|\le C\kappa^{p-2}$. If $p=2$ and $\kappa=0$, we fix any bounded symmetric positive-semidefinite value. If $\kappa=0$ and $p>2$, we set $\mathcal M(0)=0$, which is the limit of $\mathcal M(\xi)$ as $\xi\to0$. Stability below is understood with this convention. The proofs work for every such choice; when the critical set has measure zero, the choice is immaterial.

\begin{definition}
A weak solution $u$ of \eqref{eq:sing}, respectively \eqref{eq:exp}, is \emph{stable outside a closed set} $K\subset\mathbb{R}^N$ if
\[
Q_u(\varphi)\ge q\int_{\mathbb{R}^N}u^{-q-1}\varphi^2,\qquad\text{respectively}\qquad Q_u(\varphi)\ge\int_{\mathbb{R}^N}e^u\varphi^2,
\]
for every $\varphi\in C^1_c(\mathbb{R}^N\setminus K)$. It is \emph{stable} if this holds with $K=\emptyset$. Its \emph{Morse index} is the supremum of the dimensions of subspaces of $C^1_c(\mathbb{R}^N)$ on which the corresponding quadratic form
\[
\mathcal Q_u^{\rm sing}(\varphi):=Q_u(\varphi)-q\int u^{-q-1}\varphi^2,
\qquad
\mathcal Q_u^{\rm exp}(\varphi):=Q_u(\varphi)-\int e^u\varphi^2
\]
is negative definite. Thus a stable solution has Morse index zero. The index may also be infinite.
\end{definition}

A solution of finite Morse index is stable outside a compact set. Indeed, if $u$ has Morse index $k<\infty$ and is not stable outside any ball, we can choose $\varphi_1,\dots,\varphi_{k+1}$ with pairwise disjoint compact supports, each giving a negative value to the relevant quadratic form. Because the form is local, these functions span a $(k+1)$-dimensional subspace on which it is negative definite, a contradiction.

For the Laplacian, Farina \cite{Far07} proved that $-\Delta u=e^u$ has no stable solution in $\mathbb{R}^N$ when $2\le N\le9$, and classified the solutions in $\mathbb{R}^2$ that are stable outside a compact set. Dancer and Farina \cite{DF09} then proved that there is no solution stable outside a compact set when $3\le N\le9$. The threshold $N=10$ is sharp: for $N\ge10$ the singular solution $\log(2(N-2))-2\log|x|$ is stable. The second author proved that $-\Delta_pu=e^u$ has no stable solution of class $C^1$ when $N<\frac{p(p+3)}{p-1}$ \cite{Le16}, and studied more general convex nonlinearities and weights in \cite{Le16b,LLL18}. For the anisotropic Laplacian, Fazly and Li \cite{FL21} proved that $-\Delta_2^Hu=e^u$ has no solution stable outside a compact set when $3\le N\le9$, under an additional structural assumption on $H$, and that for $N=2$ the explicit finite-mass solutions are stable outside a compact set. For $N=p$ the solutions of $-\Delta_N^Hu=e^u$ with $e^u\in L^1(\mathbb{R}^N)$ were classified by Esposito \cite{Esp18} in the isotropic case and by Ciraolo and Li \cite{CL24} in general.

For the equation $\Delta u=u^{-q}$, Du and Guo \cite{DG09} found a critical power $q_c(N,2)$ at or below which radial solutions are stable, and the nonexistence of solutions of finite Morse index for $q>q_c(N,2)$ was obtained by Du and Guo and by D\'avila and Ye \cite{DY13}; see \cite[Section~1]{GM16}. Guo and Mei \cite{GM16} extended the radial analysis to the $p$-Laplacian. For $p<N<\frac{p(p+3)}{p-1}$, the exponent $q_c(N,p)$ is the unique $q>0$ such that
\begin{equation}\label{eq:qc}
\big[N-(q-1)\tau\big]^2=\frac{4p}{p-1}\,(N-q\tau),
\end{equation}
and we set $q_c(N,p):=+\infty$ for $N\ge\frac{p(p+3)}{p-1}$. For $N=p$ we have $N-q\tau=\tau(p-1)$ and $N-(q-1)\tau=p\tau$, so \eqref{eq:qc} reads $p\tau=4$, that is, $q=\frac{(p-2)^2}4$. We put $q_c(p,p):=\frac{(p-2)^2}4$. This differs from the convention $q_c(p,p)=0$ of \cite{GM16,HL26a}, and Theorem~\ref{thm:sharp}(a) below shows that the value $\frac{(p-2)^2}4$ is the correct one when $p>2$. Guo and Mei showed, for $p<N<\frac{p(p+3)}{p-1}$, that positive radial solutions of $\Delta_pu=u^{-q}$ are stable when $q\le q_c(N,p)$ and have infinite Morse index when $q>q_c(N,p)$ \cite[Theorem~1.1]{GM16}. For $p>2$ one step of the stability proof needs a correction, which we give in Lemma~\ref{lem:z}. They also stated the corresponding Liouville result for general positive solutions of class $C^{1,\omega}_{\rm loc}$ with finite Morse index when $2<p\le N$ and $q>q_c(N,p)$, with their convention $q_c(p,p)=0$ \cite[Theorem~1.2]{GM16}. In the anisotropic setting, Farina, Sciunzi, and Vuono \cite{FSV24} proved Liouville theorems, including results for solutions stable outside a compact set, for equations of Lane--Emden type. See also the recent work of Liang, Wu, and Yan \cite{LWY25} on anisotropic $p$-Laplace equations with general semilinear terms. In \cite{HL26a,HL26b} we considered \eqref{eq:sing} and \eqref{eq:exp} with general $B$.

Solutions that are only stable outside a compact set are much harder to handle. Stability far out gives integral bounds, and a Harnack inequality turns them into a pointwise estimate on the growth or decay of $u$. One then has to show that no solution can behave in this way. For $p=2$ this can be done with spherical averages, which satisfy a useful differential inequality \cite{DF09}. No analogous scalar differential inequality is available for the $p$-Laplacian when $p\ne2$. Guo and Mei \cite{GM16}, and we in \cite{HL26a,HL26b}, compared $u$ with radial solutions $U_n$ of the same equation on annuli $\{R_*<|x|<n\}$, obtained by monotone iteration, and let $n\to\infty$. The difficulty is that the nonlinearities $-t^{-q}$ and $e^t$ are increasing, so the comparison principle does not compare two solutions, and nothing controls the radial solutions as $n\to\infty$. In \cite[proof of Proposition~3.2]{GM16} the needed control of $U_n'(R_*)$ is derived from the passage from (3.22) to (3.24). This passage requires the lower bound $\int_0^1|\nabla u+t\nabla(U_n-u)|^{p-2}dt\ge|\nabla u|^{p-2}$ on the set where $U_n>u$. The estimate (3.23) there controls $\nabla U_n$ but not $\nabla u$, and we have not been able to justify this step. In \cite{HL26a} and \cite{HL26b} a maximality, respectively minimality, argument was used to make the radial solutions monotone in $n$. That monotonicity argument does not apply as stated, and the finiteness of the limiting radial profile was not established. As far as we know, no complete proof of this nonexistence statement is available for $p>2$, even for the $p$-Laplacian.

We avoid the auxiliary nonlinear problems altogether. A positive solution of $\Delta_p^Hu=u^{-q}$ in an exterior domain is a supersolution of the problem in which the nonlinearity is frozen at the minimum of $u$ on each Wulff sphere $\{H_0=r\}$, where $H_0$ is the dual norm of $H$. That problem has a fixed right-hand side, its radial solutions are explicit, and the comparison principle applies to it without any monotonicity. Comparing $u$ with these barriers on larger and larger annuli, we obtain the following statement about exterior solutions. It does not involve stability. When $N>q\tau$, let
\[
c_s:=\big[\tau^{p-1}(N-q\tau)\big]^{-\frac1{q+p-1}} .
\]
The functions
\[
U_q(x):=c_sH_0(x)^{\tau}\quad(N>q\tau)\qquad\text{and}\qquad U_e(x):=\log\big(p^{p-1}(N-p)\big)-p\log H_0(x)\quad(N>p)
\]
are singular solutions of $\Delta_p^Hu=u^{-q}$ and of $-\Delta_p^Hu=e^u$, respectively.

\begin{theorem}\label{thm:exterior}
Assume that $(H)$ holds and $p\ge2$. Let $R_0\ge0$ and $E:=\{x\in\mathbb{R}^N:H_0(x)>R_0\}$.
\begin{enumerate}[label=(\alph*)]
\item Let $q>0$ and $N>q\tau$. If $u\in W^{1,p}_{\rm loc}(E)\cap C(E)$ is a positive weak solution of $\Delta_p^Hu=u^{-q}$ in $E$, then
\[
\liminf_{r\to\infty}\ r^{-\tau}\min_{\{H_0=r\}}u\ \le\ c_s .
\]
\item Let $N>p$. If $u\in W^{1,p}_{\rm loc}(E)\cap C(E)$ is a weak solution of $-\Delta_p^Hu=e^u$ in $E$, then
\[
\limsup_{r\to\infty}\ r^{p}\max_{\{H_0=r\}}e^u\ \ge\ p^{p-1}(N-p) .
\]
\end{enumerate}
Both inequalities become equalities for $U_q$ and $U_e$.
\end{theorem}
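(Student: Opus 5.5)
The plan is to argue by contradiction, comparing $u$ on annuli $\{R_1<H_0<n\}$ with radial barriers $v(x)=w(H_0(x))$ that solve the problem with the right-hand side \emph{frozen} at the bound the contradiction hypothesis provides. I will use two standard facts.
\begin{itemize}
\item \emph{Radial formula.} For $w$ monotone, $\Delta_p^H w(H_0)=r^{1-N}\big(r^{N-1}|w'|^{p-2}w'\big)'$ at $r=H_0(x)$. This follows from $H(\nabla H_0)=1$ and the identity $H^{p-1}(\nabla H_0)\nabla H(\nabla H_0)=x/H_0$, the same computation that shows $U_q$ and $U_e$ are solutions.
\item \emph{Weak comparison principle.} If $\Delta_p^H u\le f\le\Delta_p^H v$ weakly in a bounded annulus with $f=f(x)$ fixed and $v\le u$ on the boundary, then $v\le u$ inside. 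This holds because the vector field $H^{p-1}\nabla H$ is strictly monotone under $(H)$, and no monotonicity in $u$ is needed.
\end{itemize}
The final claim of the theorem is a direct computation: $\min_{\{H_0=r\}}U_q=c_sr^\tau$ and $e^{U_e}=p^{p-1}(N-p)r^{-p}$.

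For (a), suppose the $\liminf$ exceeds $c_s$. Then there are $\theta>1$ and $R_1>R_0$ with $u\ge\theta c_sH_0^\tau$ on $\{H_0\ge R_1\}$. Hence $\Delta_p^Hu=u^{-q}\le A r^{-q\tau}$ with $A=(\theta c_s)^{-q}$. Using $q\tau=p-\tau(p-1)$, the increasing solutions of $r^{1-N}(r^{N-1}w'^{p-1})'=Ar^{-q\tau}$ are
\[
w'(r)=\Big(\tfrac{A}{N-q\tau}r^{N-q\tau}+C\Big)^{1/(p-1)}r^{\frac{1-N}{p-1}},\qquad w(R_1)=m_1:=\min_{\{H_0=R_1\}}u,\qquad C\ge0.
\]
Taking $C=0$ gives exactly $w=m_1+\theta^{-q/(p-1)}c_s(r^\tau-R_1^\tau)$, by the definition of $c_s$. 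The value $w_C(n)$ is continuous and increasing in $C$, tends to $+\infty$ as $C\to\infty$, and satisfies $w_0(n)<\theta c_sn^\tau\le M_n:=\min_{\{H_0=n\}}u$ for large $n$, since $\theta^{-q/(p-1)}<\theta$. So we can pick $C_n$ with $w_{C_n}(n)=M_n$. Comparison on the annulus gives $v_n\le u$.

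Next, $C_n\to\infty$. Otherwise, for bounded $C$ the $C$-term is $o(r^{N-q\tau})$, so $w_{C}(n)\le m_1+(1+o(1))\theta^{-q/(p-1)}c_sn^\tau<\theta c_sn^\tau$, which is a contradiction. Since $C_n\to\infty$, we get $w_{C_n}'\ge C_n^{1/(p-1)}(R_1+1)^{\frac{1-N}{p-1}}\to\infty$ on $[R_1,R_1+1]$. Therefore $w_{C_n}(R_1+1)\to\infty$, while $w_{C_n}(R_1+1)=v_n\le u$ on $\{H_0=R_1+1\}$, where $u$ is bounded by continuity. This is the contradiction. The boundary inequality on the inner sphere holds because $v_n=m_1\le u$ there.

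For (b) the argument is the mirror image. Suppose the $\limsup$ is less than $L:=p^{p-1}(N-p)$. Then $e^u\le\theta Lr^{-p}$ for $H_0\ge R_1$, with some $\theta<1$. So $u$ is a subsolution, $-\Delta_p^Hu\le\theta Lr^{-p}$. The decreasing barriers are
\[
w'=-\big(\theta p^{p-1}r^{N-p}+C\big)^{1/(p-1)}r^{\frac{1-N}{p-1}},\qquad w(R_1)=\max_{\{H_0=R_1\}}u.
\]
With $C=0$ one gets $w=w(R_1)-p\theta^{1/(p-1)}\log(r/R_1)$, whose decay rate $p\theta^{1/(p-1)}$ is strictly less than $p$. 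Meanwhile $\max_{\{H_0=n\}}u\le\log(\theta L)-p\log n$. Choose $C_n$ so that $w(n)=\max_{\{H_0=n\}}u$. Then comparison gives $u\le v_n$. Bounded $C_n$ would give $w(n)\ge w(R_1)-(1+o(1))p\theta^{1/(p-1)}\log n$, which is impossible. Hence $C_n\to\infty$ and $w_{C_n}(R_1+1)\to-\infty$, contradicting $u\le v_n$ at a point of $\{H_0=R_1+1\}$.

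I expect the main obstacle to be the rigorous justification of the comparison step for weak solutions that are merely in $W^{1,p}_{\rm loc}\cap C$. The barrier $v_n$ is smooth, with nonvanishing gradient, on the closed annulus, and its boundary values lie below (respectively above) those of $u$. One must test with $(v_n-u)^+$, which vanishes on the boundary by continuity, so it lies in $W^{1,p}_0$ of the annulus. One also needs the strict monotonicity of $\xi\mapsto H(\xi)^{p-1}\nabla H(\xi)$ for a uniformly convex $C^2$ norm. The asymptotic step, that $C_n$ stays unbounded, is elementary but requires care with the $o(1)$ terms, since $m_1$ and $R_1$ are fixed while $n\to\infty$.
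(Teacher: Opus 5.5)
Your proposal is correct and is essentially the paper's proof. Both arguments freeze the right-hand side and compare $u$ on $\{R_1<H_0<n\}$, using the comparison principle with a fixed right-hand side, with explicit radial barriers matched to $u$ on the outer Wulff sphere. Both then show that the shooting constant must blow up, which contradicts the boundedness of the continuous function $u$ on $\{H_0=R_1+1\}$.

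The only difference is packaging. The paper proves a single barrier lemma for $\Delta_p^Hv\le h(v)$, with the right-hand side frozen at $h(\min_{\{H_0=r\}}v)$, and treats (b) through $v=-u$; it controls the slopes by subadditivity of $s\mapsto s^{1/(p-1)}$. You instead freeze directly at the explicit bound given by the contradiction hypothesis and use the asymptotics $C=o(r^{N-q\tau})$, respectively $C=o(r^{N-p})$, which sidesteps that subadditivity.
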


In other words, an exterior solution of the first equation cannot satisfy $u\ge\theta U_q$ near infinity for some $\theta>1$, and an exterior solution of the second cannot satisfy $e^u\le\theta e^{U_e}$ near infinity for some $\theta<1$. The constants are those of the singular solutions, so Theorem~\ref{thm:exterior} cannot be improved. The condition $N>q\tau$ holds whenever $N\ge p$. For the Laplacian, part (b) says that a solution of $-\Delta u=e^u$ outside a ball of $\mathbb{R}^N$, $N\ge3$, satisfies $\limsup_{|x|\to\infty}|x|^2e^{u(x)}\ge2(N-2)$. For the second equation the barrier argument is applied to $v=-u$. The proof uses no upper bound on $u$, and it uses $p\ge2$ only through the subadditivity of $s\mapsto s^{1/(p-1)}$. A different use of explicit barriers, aimed at improving the decay of solutions of the Lane--Emden equation, appears in \cite{Le26}.

Stability enters only through a decay lemma: a solution stable outside a compact set lies above $U_q$ by an arbitrarily large factor, respectively $e^u$ lies below $e^{U_e}$ by an arbitrarily large factor. Together with Theorem~\ref{thm:exterior} this gives our main nonexistence result.

\begin{theorem}\label{thm:outside}
Assume that $(H)$ holds and $p\ge2$.
\begin{enumerate}[label=(\alph*)]
\item If $p\le N<\frac{p(p+3)}{p-1}$ and $q>q_c(N,p)$, then $\Delta_p^Hu=u^{-q}$ has no positive weak solution in $\mathbb{R}^N$ that is stable outside a compact set.
\item If $p<N<\frac{p(p+3)}{p-1}$, then $-\Delta_p^Hu=e^u$ has no weak solution in $\mathbb{R}^N$ that is stable outside a compact set.
\end{enumerate}
In particular, in these ranges neither equation has a solution of finite Morse index.
\end{theorem}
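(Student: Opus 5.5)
The plan is to combine Theorem~\ref{thm:exterior} with a decay lemma derived from stability outside a compact set. Let $u$ be a solution of \eqref{eq:sing}, with $B(t)=t^p/p$, that is stable outside a compact set $K\subset\{H_0\le R_0\}$. Stability gives a Farina-type integral estimate for test functions supported in $E=\{H_0>R_0\}$. To get it, I would insert $\varphi=u^{\alpha/2}\eta$, with $\alpha<0$ (since $g(t)=-t^{-q}$, one uses $u^{-\beta}$ powers), into the stability inequality. I would also test the equation with $u^{\alpha}\eta^{2}$-type functions. Using (H) and (B) to compare $Q_u$ with $\int B'(H)H\,|\nabla\varphi|^2$-like quantities via $\lambda$, this yields
\[
\int u^{\alpha-q}\eta^{m}\le C\int u^{\alpha+p-1-q\cdot 0}\dots
\]
More precisely, I would aim for $\int_{E} u^{\alpha-q}\eta^{m}\le C\int u^{\alpha-q+ \text{shift}}|\nabla\eta|^{\dots}$, valid for $\alpha$ in an interval whose endpoint is $\alpha_0$ (with $\lambda=p-1$ here). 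Young's inequality then absorbs the cross terms. The truncation needed to justify the test functions (powers of $u$ and $u$ bounded below locally, cutting at level $\epsilon$) is routine, and I take it from the integral-estimate section the paper announces.

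Next I would choose $\eta$ supported in the annulus $\{R<H_0<2R\}$ with $R\gg R_0$, so that $K$ is avoided, and take $m$ large. Hölder's inequality then gives
\[
\int_{\{R<H_0<2R\}} u^{\alpha-q}\le C R^{N-\frac{(p)(q-\alpha)}{q+p-1}\cdot\frac{q+p-1}{p}\dots}.
\]
In terms of scaling this is $\int_{A_R}u^{\alpha-q}\le CR^{N-(q-\alpha)\tau}$, compared with the corresponding integral of $U_q$. The condition $q>q_c(N,p)$, equivalently \eqref{eq:qc} with the inequality reversed and $N<\frac{p(p+3)}{p-1}$, is exactly what allows some admissible $\alpha\in(\alpha_0,\dots)$ with $N-(q-\alpha)\tau<0$, or at least with the right sign to beat $U_q$. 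I would obtain the bound with a small constant $\varepsilon$ by letting $R\to\infty$. Indeed, since the exponent is negative, the annular integral of $u^{-(q-\alpha)}$ is $o(R^{N-(q-\alpha)\tau})$, with some room to spare.

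The third step converts this into a pointwise bound. Because $u$ is a positive solution of $\Delta_p^H u=u^{-q}\ge0$, it is $p$-superharmonic-type reversed: $u$ is a subsolution. Rescaling $u_R(y)=R^{-\tau}u(Ry)$ preserves the equation, and a weak Harnack inequality for supersolutions of the rescaled problem applied to $u_R^{-1}$, or rather a local $L^\infty$ bound for the nonnegative subsolution $u^{-1}$-type quantity, turns the smallness of $\int_{A_R}u_R^{-(q-\alpha)}$ into $\min_{\{H_0=r\}}u\ge \theta r^{\tau}$ for every $\theta$ and all large $r$. Here $\theta>c_s$ arbitrary, which contradicts Theorem~\ref{thm:exterior}(a). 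In the borderline case $N=p$ the same works with $q>(p-2)^2/4$.

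For part (b) I would proceed analogously with $\varphi=e^{\beta u}\eta$. This gives $\int_{A_R}e^{(2\beta+1)u}\le CR^{N-p(2\beta+1)}$ for $\beta$ in the admissible range, where $N<\frac{p(p+3)}{p-1}$ allows a negative exponent. Rescaling $u_R=u(Rx)+p\log R$ together with a local sup bound for the subsolution $u$ (note $-\Delta_p^H u=e^u\ge0$ makes $u$ a supersolution, so I would use $e^{u}$'s integrability and Serrin-type local boundedness for $-\Delta_p^H u=e^u$ with $e^u\in L^{s}$, $s>N/p$) gives $R^pe^{u}\to0$ uniformly on $\{H_0=R\}$. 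This contradicts Theorem~\ref{thm:exterior}(b). The finite Morse index statement then follows from the remark after the definition. I expect the main obstacle to be the Harnack step: obtaining the uniform pointwise conclusion with constants independent of $R$ for the anisotropic degenerate operator. The admissible range of exponents must reach an integrability above $N/p$, and checking that this is exactly where $q>q_c$ and $N<\frac{p(p+3)}{p-1}$ enter is the delicate bookkeeping.
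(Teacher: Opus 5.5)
There is a genuine gap in your second step, and it is the step on which the whole argument turns.

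An estimate $\int_{\{R<H_0<2R\}}u^{\alpha-q}\le CR^{N-(q-\alpha)\tau}$ coming from a cutoff at scale $R$ is scale invariant. Indeed, $\theta U_q$ satisfies it for every fixed $\theta>0$, with $C$ of order $\theta^{\alpha-q}$, whatever the sign of $N-(q-\alpha)\tau$. A negative exponent only says that the annular integrals decay at the same rate as those of $U_q$. It does not make them $o(R^{N-(q-\alpha)\tau})$, and $C$ does not become small as $R\to\infty$.

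After the Harnack step you therefore get only $u\ge c|x|^\tau$ with an uncontrolled $c>0$. This $c$ may be $\le c_s$, so it does not contradict Theorem~\ref{thm:exterior}(a); this is the point of Remark~\ref{rem:HL2}. Likewise, in (b) the annular bound on $\int e^{(1+\alpha)u}$ gives only $e^u\le C|x|^{-p}$, as in the proof of Lemma~\ref{lem:decay_N}, and not $|x|^pe^u\to0$.

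The missing idea is to work at the critical exponent, with a cutoff whose inner transition is at a fixed radius. Set $\beta=\frac{N(q+p-1)}p$ and $\alpha_*=q-\beta$. This is admissible: $\alpha_*\le1-p$ since $N\ge p$, and $\alpha_*>\alpha_0$ precisely because $q>q_c(N,p)$. Proposition~\ref{prop:est_sing} with $\eta=\zeta_R$ bounds $\int_{B_R\setminus B_{R_0+2}}u^{-\beta}$ by a fixed constant from the inner transition plus $CR^{N-p\beta/(q+p-1)}=C$. Hence $u^{-\beta}\in L^1(\mathbb{R}^N\setminus B_{R_0+2})$.

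The smallness then comes from the vanishing tails. For $|y|=8R$ we have $|B_1|R^N(\sup_{B_R(y)}u)^{-\beta}\le\int_{\{|x|>7R\}}u^{-\beta}\to0$. A Harnack inequality with $R$-independent constant upgrades this to $u(y)\ge c\,\vartheta^{-1/\beta}|y|^\tau$ with $\vartheta$ arbitrarily small.

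Your annular estimate is still needed, but for the Harnack constant rather than for smallness. Take $\alpha_\varepsilon$ slightly below $\alpha_*$ and $s_\varepsilon=\frac N{p-\varepsilon}>\frac Np$. The annular estimate then bounds $R^{p-N/s_\varepsilon}\|u^{-(q+p-1)}\|_{L^{s_\varepsilon}(B_{2R}(y))}$ uniformly in $R$ and $y$. So Lemma~\ref{lem:serrin}(ii) applies to $\Delta_p^Hu=d\,u^{p-1}$ with $d=u^{-(q+p-1)}$. Lemma~\ref{lem:serrin}(i) also gives the continuity of $u$ that Theorem~\ref{thm:exterior} requires.

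For (b) the analogue is $\alpha_*=\frac Np-1$, which lies in $(0,\frac4{p-1})$ exactly when $p<N<\frac{p(p+3)}{p-1}$. This gives $e^{\frac Npu}\in L^1$ near infinity. You then combine it with the local maximum principle, Lemma~\ref{lem:serrin}(iii), for the subsolution $w=e^{\theta u}$, $\theta=\frac N{p^2}$, of $-\Delta_p^Hw\le\theta^{p-1}e^uw^{p-1}$; its $L^p$ norm is exactly the critical integral.
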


For $p=2$ and $H(\xi)=|\xi|$, the corresponding negative-power problem was treated by Du and Guo \cite{DG09} and D\'avila and Ye \cite{DY13}, while part (b) is due to Dancer and Farina \cite{DF09}. For anisotropic $H$ and $p=2$, part (b) was proved in \cite{FL21} under an additional structural assumption on $H$, which is not needed here. For $p>2$ we are not aware of earlier complete proofs of either part, even for the $p$-Laplacian.

The decay lemma rests on integral estimates obtained by testing the stability inequality with a power of the solution, an idea that for $p\neq2$ goes back to Damascelli, Farina, Sciunzi, and Valdinoci \cite{DFSV09}. Since powers of $u$ are not admissible test functions when $u$ is not known to be locally bounded, one truncates them, and every absorption of one integral into another must be carried out while the truncation is still in place, where all the integrals are finite. In \cite[Proposition~2.1]{LNN18}, \cite[Proposition~6]{HL26a}, \cite[Proposition~5]{HL26b}, and \cite[Proposition~2.1]{LLL18}, the absorption was performed only after the truncation had been removed, leaving the finiteness of some absorbed integrals unjustified. For locally bounded solutions, as in \cite{Le16}, the difficulty does not arise. We prove the estimates for general $B$, in a form that covers both equations at once (Lemma~\ref{lem:key}). They also give complete proofs of the nonexistence of stable solutions claimed in \cite{HL26a,HL26b}, which we state as Corollary~\ref{cor:stable}.

For $N=p$, Theorem~\ref{thm:outside}(b) fails. Ciraolo and Li \cite{CL24} proved that the weak solutions $u\in W^{1,N}_{\rm loc}(\mathbb{R}^N)$ of $-\Delta_N^Hu=e^u$ with $e^u\in L^1(\mathbb{R}^N)$ are exactly the functions
\begin{equation}\label{eq:bubbles}
U_{\ell,x_0}(x)=\log\frac{N\big(\frac{N^2}{N-1}\big)^{N-1}\ell^N}{\Big[1+\ell^{\frac N{N-1}}H_0(x-x_0)^{\frac N{N-1}}\Big]^N},\qquad\ell>0,\ x_0\in\mathbb{R}^N .
\end{equation}
These functions are stable outside a compact set (see the proof of Theorem~\ref{thm:Nliouville} and \cite[Theorem~3(ii)]{HL26b}). We show that there are no others.

\begin{theorem}\label{thm:Nliouville}
Assume that $(H)$ holds and $N\ge2$. A weak solution of $-\Delta_N^Hu=e^u$ in $\mathbb{R}^N$ is stable outside a compact set if and only if it is one of the functions \eqref{eq:bubbles}. In particular, every solution of finite Morse index is one of these functions.
\end{theorem}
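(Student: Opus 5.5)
The plan is to reduce the theorem to the Ciraolo--Li classification. It suffices to show that a solution stable outside a compact set satisfies $e^u\in L^1(\mathbb{R}^N)$. The finite Morse index statement then follows from the observation after the Definition, since such solutions are stable outside a compact set.

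For the converse direction (each $U_{\ell,x_0}$ is stable outside a compact set), I would verify Hardy-type stability near infinity directly. Write $e^{U_{\ell,x_0}}\le C H_0(x-x_0)^{-2N/(N-1)}$, so $e^{U}$ decays faster than $H_0^{-N}$. On $\{H_0(x-x_0)>R\}$ the gradient satisfies $H(\nabla U)\sim N H_0^{-1}$. Using the ellipticity $\langle\mathcal M(\xi)\eta,\eta\rangle\ge c\,H(\xi)^{N-2}|\eta|^2$, the form $Q_u(\varphi)$ dominates $c\int H_0^{-(N-2)}|\nabla\varphi|^2$. The weighted Hardy inequality in $\mathbb{R}^N$ with weight $|x|^{2-N}$ has constant $(2-N+N-2)^2/4\cdot$, which is degenerate. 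So instead I would compare with the faster decay: $e^U\le \varepsilon H_0^{-N}$ for large $R$, and I would use the one-dimensional Hardy-type inequality in the radial variable combined with a logarithmic weight. Alternatively, I would simply cite \cite[Theorem~3(ii)]{HL26b} for this direction, as the paper itself indicates.

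For the main direction, let $u$ be stable outside $\{H_0\le R_0\}$. I would apply Lemma~\ref{lem:key} (the integral estimates for general $B$, here $B(t)=t^N/N$) with test functions $e^{\alpha u}\psi$, truncated, and $\psi$ a cutoff supported in an annulus outside the compact set. The aim is the bound
\[
\int_{R<H_0<2R}e^{(1+\alpha)u}\le C R^{N-N(1+\alpha)}\cdot\ldots
\]
The scaling for $N=p$ is $e^u\sim H_0^{-N}$. The outcome should be $\int_{\{R<H_0<2R\}}e^{u}\le C$, uniformly in $R$, together with an improved bound for $e^{(1+\alpha)u}$ with small $\alpha>0$. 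Summing over dyadic annuli only gives logarithmic growth, so I would instead derive the pointwise decay via the Harnack inequality (the decay lemma mentioned before Theorem~\ref{thm:outside}): $\sup_{\{H_0=r\}}e^u\le \varepsilon(r) r^{-N}$ with $\varepsilon(r)\to0$.

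To upgrade this to integrability, I would use the structure of the borderline case. With $f=e^u=o(H_0^{-N})$, $u$ is $N$-superharmonic outside a compact set. Comparison with the fundamental solution $-\log H_0$ of the Finsler $N$-Laplacian then gives $u\le -(\beta) \log H_0+C$, where the exponent is determined by the flux $\int_{\{H_0<r\}}e^u$. Concretely, I would track the flux $m(r)=\int_{\{H_0<r\}}e^u$. If $m(r)$ grows without bound, the comparison (in the spirit of the frozen-right-hand-side barriers of Theorem~\ref{thm:exterior}) forces $u\le -\beta\log H_0$ with $\beta$ arbitrarily large. That would make $e^u$ integrable, which is a contradiction. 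So $m$ stays bounded, i.e.\ $e^u\in L^1$, and \cite{CL24} applies.

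The main obstacle is this final step: turning the $o(H_0^{-N})$ decay into integrability in the borderline dimension. There $H_0^{-N}$ is exactly non-integrable and the stability estimates alone lose a logarithm. I would first try the flux/barrier comparison just described, exploiting that a positive flux $m$ produces decay $e^u\lesssim H_0^{-(m/c)^{1/(N-1)}}$ through the explicit radial solutions of $-\Delta_N^H w=0$.
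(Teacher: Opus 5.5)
Your reduction to \cite{CL24}, the finite Morse index consequence, and the converse (via \cite[Theorem~3(ii)]{HL26b} or a logarithmic Hardy inequality, as the paper does) are fine. One correction there: $e^{U_{\ell,x_0}}$ decays like $H_0^{-N^2/(N-1)}$, not $H_0^{-2N/(N-1)}$; the latter would not beat $H_0^{-N}$ when $N\ge3$.

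The genuine gap is in the main direction, the passage from the stability estimates to $e^u\in L^1$, and both of your last two steps fail.

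First, the decay $\sup_{\{H_0=r\}}e^u\le\varepsilon(r)r^{-N}$ with $\varepsilon(r)\to0$ is not available when $N=p$. Proposition~\ref{prop:est_exp} needs $\alpha>0$, and the local maximum principle for $e^{\theta u}$ only gives the scale-invariant bound $e^u\le C_0|x|^{-N}$ of Lemma~\ref{lem:decay_N}. The argument that gives $o(|x|^{-N})$ for $N>p$ uses $e^{\frac Npu}\in L^1$ near infinity. For $N=p$ this is exactly the finite mass you are trying to prove, so that step is circular.

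Second, the flux step fails for nonradial $u$. Since $-\Delta_N^Hu\ge0$, comparison with multiples of $\log H_0$ bounds $u$ from \emph{below}, not from above. Upper bounds by comparison, as in Lemma~\ref{lem:barrier}, need a pointwise bound on the right-hand side (the frozen $\max_{\{H_0=s\}}e^u$), and they do not see the integrated mass $m(r)$. The identity $r^{N-1}\varphi_N(-v'(r))=c\,m(r)$, which turns mass into decay, holds only for radial $v$. In general the flux through a sphere only controls $\int H(\nabla u)^{N-1}$ there. Combined with Harnack and Caccioppoli on dyadic annuli, it bounds $m(2^j)$ by a power of the local gap below $\log C_0-N\log|x|$. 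This gives annular masses of size $e^{-c\,m(2^j)^{1/(N-1)}}$, which is compatible with $m(2^j)\to\infty$ slowly. Note also that this step never uses stability, so it would amount to an unproved finite-mass theorem for every solution with $e^u\le C|x|^{-N}$.

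The missing idea is to use stability a second time, to control the weight $H(\nabla u)^{N-2}$ in $Q_u$. This weight is what makes $N\ge3$ harder than $N=2$, where the plain logarithmic cutoff of \cite{Far07,DF09} suffices. The paper proceeds as follows.
\begin{enumerate}[label=(\roman*)]
\item Rescale to $u_j(z)=u(2^jz)+Nj\log2$ on dyadic annuli and set $s_j=L-\sup_{A'}u_j\ge0$.
\item Harnack for $L-u_j$ (Lemma~\ref{lem:serrin}(iv)) and a Caccioppoli estimate give $\mu_j=\int_{A_j}e^u\le A_*e^{-s_j}$ and $\int_{A_j}H(\nabla u)^{N-2}|x|^{-2}\le A_*(1+s_j)^{N-2}$ (Lemma~\ref{lem:annulus}).
\item Testing stability with a logarithmic cutoff whose slope on $A_j$ is proportional to $(1+s_j)^{2-N}$ gives $\sum_{j_0\le j<J}\mu_j\le D_0+C\big(\sum_{J\le j<J'}(1+s_j)^{2-N}\big)^{-1}$.
\item If $\sum_j(1+s_j)^{2-N}=\infty$, the mass is bounded. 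Otherwise $\mu_j\le A_*e^{-s_j}\le C(1+s_j)^{2-N}$ is summable.
\end{enumerate}
In both cases $e^u\in L^1$, and \cite{CL24} applies.
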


For $N=2$ and $H(\xi)=|\xi|$ this is due to Farina \cite{Far07}. For $H(\xi)=|\xi|$ and every $N\ge2$, the statement was claimed by the second author and coauthors in \cite[Theorem~1.8]{LLL18}. The proof there bounds $\int e^{(1+\alpha)u}$ outside a ball by an integral estimate and lets $\alpha\to0^+$. However, the constant in that estimate is of order $\alpha^{1-N}$, because of the term $b^p|b'|^{1-p}$ in the estimate, so the argument gives no bound uniform in $\alpha$. The proofs for $N=2$ insert a logarithmic cutoff function in the stability inequality. For $N=2$ the quadratic form $Q_u$ does not depend on $u$, and one obtains $\int e^u<\infty$. For $N\ge3$, $Q_u(\varphi)$ is comparable to $\int H(\nabla u)^{N-2}|\nabla\varphi|^2$, and the weight is not controlled a priori. We show that on each dyadic annulus this weight is bounded by a power of the gap $s_j$ between $u+N\log|x|$ and its upper bound coming from the decay lemma, while the mass of the annulus is at most $Ce^{-s_j}$. A weighted logarithmic cutoff function then gives finite mass when the gaps are small often enough, and the masses are summable otherwise.

Finally, the thresholds in Theorem~\ref{thm:outside} are sharp when $N>p$, and for the negative exponent also when $N=p>2$.

\begin{theorem}\label{thm:sharp}
Assume that $(H)$ holds and $p\ge2$.
\begin{enumerate}[label=(\alph*)]
\item Let $N\ge p$ and $q>0$. For every $a>0$ the equation $\Delta_p^Hu=u^{-q}$ has a unique positive solution $u_a\in C^1(\mathbb{R}^N)$ of the form $u_a(x)=v(H_0(x))$ with $u_a(0)=a$. If either $N>p$ or $N=p>2$, and $q\le q_c(N,p)$, in particular if $N\ge\frac{p(p+3)}{p-1}$, then $u_a$ is stable. In these cases $u_a(x)>U_q(x)$ for every $x\ne0$.
\item Let $N>p$. The function $U_e$ belongs to $W^{1,p}_{\rm loc}(\mathbb{R}^N)$ and is a weak solution of $-\Delta_p^Hu=e^u$ in $\mathbb{R}^N$. It is stable if and only if $N\ge\frac{p(p+3)}{p-1}$.
\item Let $N=p>2$ and $q>0$. The function $U_q$ belongs to $W^{1,p}_{\rm loc}(\mathbb{R}^N)$ and is a weak solution of $\Delta_p^Hu=u^{-q}$ in $\mathbb{R}^N$, positive except at the origin. It is stable if and only if $q\le\frac{(p-2)^2}4$.
\end{enumerate}
\end{theorem}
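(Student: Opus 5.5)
Theorem~\ref{thm:sharp} has three parts, and I would prove them in the order (b), (c), (a). The reason is that (b) and (c) are explicit computations with power-type singular solutions. The Wulff-symmetric structure reduces each of them to a one-dimensional Hardy inequality with a sharp constant. Part (a) then needs an ODE comparison together with the Hardy inequality that (c) and (b) supply.

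\emph{Common reduction.} For $u=v(H_0(x))$ we have $\nabla u=v'(r)\nabla H_0(x)$ and $H(\nabla H_0)=1$. The identity $\nabla H(\nabla H_0(x))=x/H_0(x)$ then gives
\[
\Delta_p^H u=r^{1-N}(r^{N-1}|v'|^{p-2}v')' .
\]
This yields the singular solutions and the ODE for $u_a$. For the quadratic form I would use that $D^2H(\xi)\xi=0$, and that the restriction of $D^2H$ is comparable to the identity. At $\xi=v'\nabla H_0$ this gives
\[
Q_u(\varphi)\ge(p-1)\int|v'|^{p-2}\langle\nabla H(\nabla u),\nabla\varphi\rangle^2
=(p-1)\int|v'(H_0)|^{p-2}\,|\partial_r\varphi|^2 ,
\]
where $\partial_r\varphi=\langle x/H_0,\nabla\varphi\rangle$ is the radial derivative in the Wulff sense. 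In Wulff polar coordinates $x=r\omega$ with $H_0(\omega)=1$, one has $dx=r^{N-1}\,dr\,d\sigma(\omega)$ for a fixed measure $d\sigma$. So stability follows from the one-dimensional weighted Hardy inequality
\[
(p-1)\int_0^\infty|v'|^{p-2}\psi'^2r^{N-1}\,dr\ \ge\ \int_0^\infty W(r)\psi^2r^{N-1}\,dr ,
\]
applied for each $\omega$. Here $W=q v^{-q-1}$ or $W=e^v$.

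Conversely, the radial part of $Q_u$ equals this left side exactly when $\varphi$ is Wulff-radial, since then the $D^2H$ term vanishes. Thus instability is detected by radial test functions, and the sharp Hardy constant decides both directions.

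\emph{Part (b).} For $v=\log(p^{p-1}(N-p))-p\log r$ the weight is $|v'|^{p-2}=p^{p-2}r^{2-p}$, and $e^v=p^{p-1}(N-p)r^{-p}$. The Hardy inequality $\int r^{N-1-(p-2)}\psi'^2\ge\frac{(N-p)^2}{4}\int r^{N-p-1}\psi^2$ is sharp. Hence stability holds if and only if
\[
(p-1)p^{p-2}\tfrac{(N-p)^2}4\ge p^{p-1}(N-p),
\]
that is, $(p-1)(N-p)\ge4p$, which is $N\ge\frac{p(p+3)}{p-1}$. For $W^{1,p}_{\rm loc}$ I note $|\nabla U_e|^p\sim r^{-p}$ with $N>p$, and $e^{U_e}\in L^1_{\rm loc}$. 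To verify the weak equation across the origin I would use a cutoff $\eta_\varepsilon$ near $0$: the flux $r^{N-1}|v'|^{p-1}$ through $\{H_0=\varepsilon\}$ is $O(\varepsilon^{N-p})\to0$.

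\emph{Part (c).} With $N=p$ the weight is $|v'|^{p-2}\sim r^{(\tau-1)(p-2)}$ and $W=qc_s^{-q-1}r^{-\tau(q+1)}$. The exponents match the Hardy scaling. Running the same sharp-constant computation, with the $N=p$ relations noted before the theorem, should reduce the criterion to $p\tau\ge4$, i.e. $q\le\frac{(p-2)^2}4$. Regularity at $0$ holds because $\tau<1$ when $p>2$, together with the same flux argument. The Hardy inequality needs $\psi(0)$ free, which is allowed since the weight exponent $N-1+(\tau-1)(p-2)>0$ is checked via $\tau>0$.

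\emph{Part (a).} Existence and uniqueness of $u_a$ come from the integral equation
\[
v'(r)=\Big(r^{1-N}\int_0^r s^{N-1}v(s)^{-q}\,ds\Big)^{1/(p-1)} ,
\]
solved by a fixed point near $0$. Since $v$ is increasing, $v^{-q}\le a^{-q}$, which gives global existence. The solution is $C^1$, and $v'(0)=0$.

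The main obstacle is showing $u_a>U_q$ and deducing stability. For the comparison I would use the Emden--Fowler change $v(r)=r^\tau w(\log r)$, which turns the ODE into an autonomous second-order system with equilibrium $w\equiv c_s$. As $t\to-\infty$ we have $w\to+\infty$, and I need $w>c_s$ for all $t$ with $w\to c_s$ as $t\to\infty$. I would try a Lyapunov or energy function for the autonomous system. The condition $q\le q_c$ means the linearization at $c_s$ has real eigenvalues, the case of the discriminant relation \eqref{eq:qc}. Together with a first-crossing argument in the phase plane, this should exclude crossing $c_s$, with Lemma~\ref{lem:z} providing the corrected monotonicity of $z=rv'/v$. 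Stability then follows by monotonicity of the Hardy quotient. Since $v>U_q$ we get $qv^{-q-1}<qU_q^{-q-1}$. The weight $|v'|^{p-2}$ has to be compared with $|U_q'|^{p-2}$, which is where $z\le\tau$ from Lemma~\ref{lem:z} enters. I would instead apply a Picone-type identity with the positive function $v'$, or $rv'$, which solves the linearized equation; this gives a positive supersolution of the linearized operator and hence stability directly.
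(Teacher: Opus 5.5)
Your parts (b) and (c) are correct, and they take a somewhat different route from the paper. You drop the nonnegative $D^2H$ term and integrate along Wulff rays, which reduces sufficiency to $\int_0^\infty r^{A}\psi'^2\,dr\ge\frac{(A-1)^2}4\int_0^\infty r^{A-2}\psi^2\,dr$. This holds with $\psi(0)$ free as soon as $A>1$; that, not $A>0$, is the condition to check, and in (c) $A-1=\tau(p-2)>0$. You then test with Wulff-radial $\varphi$, for which the $D^2H$ term vanishes, so that sharpness of the constant gives instability. The paper instead proves sufficiency by Lemma~\ref{lem:picone} with $W=H_0^{-b}$ (the Hardy ground state) followed by a separate removal of the origin, and it obtains instability from Corollary~\ref{cor:stable}. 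Your version is more elementary and self-contained. Your remark that $\tau<1$ when $p>2$ is false, since $\tau<1$ if and only if $q>1$, but nothing depends on it.

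Part (a), however, has a genuine gap.

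\emph{The comparison $u_a>U_q$ is not proved.} Real eigenvalues at the equilibrium only describe the flow near $c_s$. An unspecified Lyapunov function does not control the trajectory coming in from $X=+\infty$. The paper needs a concrete argument here: write $X'=h(X)$ up to a hypothetical first crossing, take the first intersection of $h$ with the line $-\mu(\xi-L)$, and use the convexity bound \eqref{eq:Phi} to reach \eqref{eq:quad}, which fails at $\mu=\mu_-$. Citing Lemma~\ref{lem:z} is citing exactly this missing step.

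\emph{Neither stability mechanism you propose works.}
\begin{itemize}
\item Comparing Hardy quotients with $U_q$ would need $|v'|^{p-2}\ge|U_q'|^{p-2}$, but in fact $v'<U_q'$. Near $r=0$ this holds because $v'\sim(a^{-q}r/N)^{1/(p-1)}$, $U_q'=c_s\tau r^{\tau-1}$, and $\tau-1<\frac1{p-1}$. It holds everywhere by the bound $Y<(\tau L)^{p-1}$ in the proof of Lemma~\ref{lem:z}. So for $p>2$ both the weight and the potential are smaller for $u_a$, and nothing follows. The bound $rv'\le\tau v$ is an upper bound on $v'$, not the lower bound you would need.
\item $v'$ and $rv'$ do not solve the linearized equation. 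Set $L\phi:=(p-1)(r^{N-1}|v'|^{p-2}\phi')'+qr^{N-1}v^{-q-1}\phi$. Differentiating the ODE gives $L[v']=(N-1)r^{N-3}|v'|^{p-2}v'>0$, and $L[rv']=\tau L[v]=p\,r^{N-1}v^{-q}>0$. Both are strict subsolutions, whereas Lemma~\ref{lem:picone} needs $L[W]\le0$. This had to fail: $v'>0$ for every $q$, so it would give stability also for $q>q_c(N,p)$, contradicting Corollary~\ref{cor:stable}(b).
\end{itemize}

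The missing idea is that the scaling $v\mapsto\omega^{-\tau}v(\omega r)$ produces the exact Jacobi field $z=\tau v-rv'$, with $L[z]=0$ and $z'(0)=0$. The hypothesis $q\le q_c(N,p)$ is used precisely to prove $z>0$ on $[0,\infty)$. Then $W=z(H_0(x))$ in Lemma~\ref{lem:picone} on $\mathbb{R}^N\setminus\{0\}$, together with removal of the origin using $N\ge3$, gives stability.

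You also do not show that $u_a$ is a weak solution across the origin, or that it is unique among all $C^1$ Wulff-radial weak solutions, but these are routine.
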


By Theorem~\ref{thm:Nliouville}, the restriction $N>p$ in Theorem~\ref{thm:outside}(b) cannot be removed, and by Theorem~\ref{thm:sharp}(b) neither can the condition $N<\frac{p(p+3)}{p-1}$, in Theorem~\ref{thm:outside}(b) or in the last statement of Corollary~\ref{cor:stable}(c). Theorem~\ref{thm:sharp}(a) shows that $q>q_c(N,p)$ is sharp in Theorem~\ref{thm:outside}(a) and in Corollary~\ref{cor:stable}(b) when $N>p$, and also when $N=p>2$, where both the regular radial solutions and the singular solution are stable for $q\le\frac{(p-2)^2}4$. When $2\le N<p$, the integral method needs $q>\frac{(p-2)^2}4$, which is exactly the condition $\alpha_0<1-p$ for the $p$-Laplacian, and we do not know whether this is sharp.

Our results correct several earlier statements. Theorem~\ref{thm:outside} replaces \cite[Theorem~4]{HL26a} and \cite[Theorem~3(i)]{HL26b}, and gives a complete proof of \cite[Theorem~1.2]{GM16} for $N>p$. Theorem~\ref{thm:Nliouville} gives a proof of \cite[Theorem~1.8]{LLL18}, and Theorem~\ref{thm:sharp}(a) proves the stability claim of \cite[Proposition~3]{HL26a}. Theorem~\ref{thm:sharp}(a) also shows that the case $N=p$ of \cite[Theorems~1.1, 1.2, and 2.2]{GM16} and of \cite[Corollary~2 and Theorem~4]{HL26a} must be restricted to $q>\frac{(p-2)^2}4$ when $p>2$: for $0<q\le\frac{(p-2)^2}4$ the radial solutions $u_a$ are positive, stable, of class $C^{1,1/(p-1)}_{\rm loc}$ when $H(\xi)=|\xi|$ (since $v'(r)\sim(r/(Na^q))^{1/(p-1)}$ as $r\to0$), and lie above $U_q$. In the proof of Theorem~\ref{thm:sharp}(a) we also correct one step in the proof of \cite[Theorem~2.10]{GM16} (see Lemma~\ref{lem:z}). Corollary~\ref{cor:stable} gives complete proofs of \cite[Theorem~1 and Corollary~2]{HL26a} and \cite[Theorem~1 and Corollary~2]{HL26b}, with the changes described in Remark~\ref{rem:HL1}. With constant weights, \cite[Theorem~1.1]{LNN18} uses the convention $q_c(p,p)=0$ and thus asserts that there is no stable solution for $N=p$ and every $q>0$. By Theorem~\ref{thm:sharp}(a), this is false when $p>2$ and $0<q\le\frac{(p-2)^2}4$. The proof of \cite[Proposition~2.1]{LNN18} has the truncation gap described above, and a corrected constant-weight nonexistence statement is given by Corollary~\ref{cor:stable}(b). We do not treat the weighted case, which would require carrying the weights through the absorption at the truncated level. For constant weights, \cite[Proposition~2.1]{LLL18} is replaced by Proposition~\ref{prop:est_exp}, so \cite[Theorem~1.4]{LLL18} with constant weights is contained in Corollary~\ref{cor:stable}(c).

Section~\ref{sec_prelim} collects the facts we need about Finsler norms, radial functions, and local regularity. Section~\ref{sec_integral} contains the integral estimates and their consequences for stable solutions. Theorem~\ref{thm:exterior} is proved in Section~\ref{sec_exterior}, Theorem~\ref{thm:outside} in Section~\ref{sec_outside}, Theorem~\ref{thm:Nliouville} in Section~\ref{sec_N}, and Theorem~\ref{thm:sharp} in Section~\ref{sec_sharp}. We write $C$ for positive constants that may change from line to line, and we often omit $dx$.

\section{Preliminaries}\label{sec_prelim}

Since $H$ is a norm of class $C^2$ away from the origin, there are $0<k_1\le k_2$ with $k_1|\xi|\le H(\xi)\le k_2|\xi|$, and for $\xi\ne0$ and $t\ne0$
\begin{equation}\label{eq:homog}
\langle\nabla H(\xi),\xi\rangle=H(\xi),\qquad \nabla H(t\xi)=\operatorname{sign}(t)\nabla H(\xi),\qquad D^2H(t\xi)=\frac1{|t|}D^2H(\xi),\qquad D^2H(\xi)\xi=0 .
\end{equation}
Consequently there are $M,M_1>0$ such that $|\nabla H(\xi)|\le M$ and $|D^2H(\xi)|\le M_1/H(\xi)$ for $\xi\ne0$. The uniform convexity in $(H)$ is equivalent to the positive definiteness of $D^2H(\xi)$ on the hyperplane $\nabla H(\xi)^\perp$, uniformly on $\{H=1\}$, and also to $D^2(H^2/2)(\xi)\ge\lambda_0I$ for $\xi\ne0$ and some $\lambda_0>0$ \cite{CFV14}. It is not the positive definiteness of $D^2H(\xi)$ itself, which is impossible since $D^2H(\xi)\xi=0$. In particular $H^p$ is strictly convex, and the field $\mathcal A(\xi):=H(\xi)^{p-1}\nabla H(\xi)=\nabla(H^p/p)(\xi)$, which defines $\Delta_p^H$, is strictly monotone:
\begin{equation}\label{eq:monotone}
\langle\mathcal A(\xi)-\mathcal A(\eta),\xi-\eta\rangle>0\qquad\text{for }\xi\ne\eta .
\end{equation}
Since $H$ is even, $\mathcal A(-\xi)=-\mathcal A(\xi)$.

By $(B)$, $B''(t)t\le\lambda B'(t)$ for $t>0$, and for $\xi\neq0$
\begin{equation}\label{eq:Mbound}
|\mathcal M(\xi)|\le C(\kappa+H(\xi))^{p-2},\qquad |B'(H(\xi))\nabla H(\xi)|\le C\big(|\xi|+|\xi|^{p-1}\big).
\end{equation}
Fix $\delta>0$. Writing $t=H(\xi)$, assumption $(B)$ gives, for $t<\delta$,
\begin{equation}\label{eq:small}
c_1t\le B'(t)\le C_1t,\qquad c_3\le B''(t)\le C_3,
\end{equation}
with $c_1=\gamma\kappa^{p-2}$, $C_1=\Gamma(\kappa+\delta)^{p-2}$, $c_3=\gamma(p-1)\kappa^{p-2}$, $C_3=\Gamma(p-1)(\kappa+\delta)^{p-2}$, and, for $t\ge\delta$,
\begin{equation}\label{eq:large}
c_2t^{p-1}\le B'(t)\le C_2t^{p-1},\qquad c_4t^{p-2}\le B''(t)\le C_4t^{p-2},
\end{equation}
with $c_2=\gamma$, $C_2=\Gamma(1+\kappa/\delta)^{p-2}$, $c_4=\gamma(p-1)$, $C_4=\Gamma(p-1)(1+\kappa/\delta)^{p-2}$. Here $p\ge2$ is used. When $\kappa=0$, \eqref{eq:large} holds for every $t>0$.

The dual norm of $H$ is $H_0(x):=\sup\{\langle x,\xi\rangle:H(\xi)=1\}$. In \cite{HL26a,HL26b} we used $\widehat H_0(x)=H_0(-x)$. Since $H$ is even, $\widehat H_0=H_0$.

\begin{lemma}\label{lem:dual}
Assume $(H)$. Then $H_0$ is a norm of class $C^1(\mathbb{R}^N\setminus\{0\})$, and for $x\ne0$
\[
\langle\nabla H_0(x),x\rangle=H_0(x),\qquad H(\nabla H_0(x))=1,\qquad \nabla H(\nabla H_0(x))=\frac{x}{H_0(x)} .
\]
\end{lemma}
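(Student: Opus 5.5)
The plan is to use only convex duality for the norm $H$ and its polar $H_0$, with the regularity of $H_0$ coming from the uniform convexity in $(H)$.

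\emph{Step 1: $H_0$ is a norm.} Since $k_1|\xi|\le H(\xi)\le k_2|\xi|$, the unit ball $\{H\le1\}$ is compact, convex, symmetric, and contains a neighborhood of $0$. Its support function $H_0$ is therefore finite, positively homogeneous, subadditive (a supremum of linear functions), even, and positive for $x\neq0$. The Euler identity $\langle\nabla H_0(x),x\rangle=H_0(x)$ will follow from $1$-homogeneity once differentiability is known.

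\emph{Step 2: differentiability and the identities.} Fix $x\ne0$. By compactness the supremum defining $H_0(x)$ is attained at some $\xi_x$ with $H(\xi_x)=1$. The Lagrange condition at this maximizer gives $x=\mu\nabla H(\xi_x)$ for some $\mu\ge0$. Pairing with $\xi_x$ and using \eqref{eq:homog} gives $\mu=\mu H(\xi_x)=\langle x,\xi_x\rangle=H_0(x)$, so
\[
\nabla H(\xi_x)=\frac{x}{H_0(x)}.
\]
I expect the uniqueness of the maximizer to be the main point. Uniform convexity of $\{H<1\}$ makes this set strictly convex. If there were two maximizers, their midpoint would lie in the open set $\{H<1\}$, and a slight dilation of it would give a value strictly exceeding $H_0(x)$, which is a contradiction. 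Hence $\xi_x$ is unique. For a support function, the subdifferential $\partial H_0(x)$ equals the set of maximizers. Uniqueness therefore gives differentiability at $x$, with $\nabla H_0(x)=\xi_x$. This yields $H(\nabla H_0(x))=1$ and $\nabla H(\nabla H_0(x))=x/H_0(x)$.

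\emph{Step 3: $C^1$ regularity.} A convex function differentiable on an open set is automatically $C^1$ there, since gradients of convex functions are continuous wherever they exist. Alternatively, continuity of $x\mapsto\xi_x$ can be obtained directly from uniqueness and compactness. If one wants more, namely $C^2$ regularity, the map $\xi\mapsto\nabla H(\xi)$ restricted to $\{H=1\}$ is a diffeomorphism onto $\{H_0=1\}$ by the positive definiteness of $D^2H$ on $\nabla H(\xi)^\perp$. The inverse function theorem then gives $\xi_x$ as a $C^1$ function of $x$. This stronger statement is not needed here.
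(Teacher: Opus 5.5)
Your argument is correct. The paper gives no proof of this lemma; it records the identities as classical and refers to \cite{CS09,CFV14,CFR20}. Your proposal therefore supplies what the paper takes from the literature, by the standard convex-duality route. The subdifferential $\partial H_0(x)$ is the face of $\{H\le1\}$ exposed by $x$, and strict convexity makes that face a single point $\xi_x$. So $H_0$ is differentiable, and hence $C^1$ since it is convex, with $\nabla H_0(x)=\xi_x$. The first-order condition at $\xi_x$, together with Euler's identity for $H$, then gives $\nabla H(\xi_x)=x/H_0(x)$.

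What this buys over the citation is transparency about hypotheses. Only strict convexity of $\{H<1\}$ and $H\in C^1(\mathbb{R}^N\setminus\{0\})$ enter. The full uniform convexity and $C^2$ regularity in (H) are needed elsewhere, for instance for $D^2(H^2/2)\ge\lambda_0I$ in the proof of Theorem~\ref{thm:Nliouville}.

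Two points of bookkeeping. First, the Lagrange multiplier is a priori only real; the constraint is regular because $\langle\nabla H(\xi),\xi\rangle=1$ on $\{H=1\}$. Its value $\mu=H_0(x)>0$ comes out of your pairing with $\xi_x$, so it need not be assumed nonnegative. Second, the subdifferential formula for a support function refers to maximizers over the ball $\{H\le1\}$. These coincide with the maximizers over the sphere $\{H=1\}$, because $H_0(x)>0$ pushes every maximizer to the boundary.
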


These facts are classical; see \cite{CS09,CFV14,CFR20}. We use them in the radial reduction below and in Sections~\ref{sec_N} and~\ref{sec_sharp}. The set $\{H_0=r\}$ is called a Wulff sphere, and for $f\ge0$ we have $\int_{\mathbb{R}^N}f(H_0(x))\,dx=N|\{H_0<1\}|\int_0^\infty f(r)r^{N-1}dr$.

\begin{lemma}\label{lem:radial}
Assume $(H)$, let $0\le a<b\le\infty$, and let $V\in C^1((a,b))$ be such that $r\mapsto r^{N-1}\varphi_p(V'(r))$ belongs to $C^1((a,b))$, where $\varphi_p(s)=|s|^{p-2}s$. Then $W(x):=V(H_0(x))$ satisfies $\mathcal A(\nabla W(x))=\varphi_p(V'(r))\,\frac{x}{r}$ and
\[
\Delta_p^HW(x)=r^{1-N}\big(r^{N-1}\varphi_p(V'(r))\big)'\Big|_{r=H_0(x)}
\]
pointwise in $\{a<H_0<b\}$, and also in the weak sense there.
\end{lemma}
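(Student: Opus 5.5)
The plan is to compute $\nabla W$ by the chain rule and then simplify with Lemma~\ref{lem:dual}. For $x\ne0$ with $a<H_0(x)<b$, $H_0$ is $C^1$, so
\[
\nabla W(x)=V'(r)\nabla H_0(x),\qquad r=H_0(x).
\]
If $V'(r)=0$, then $\nabla W=0$ and $\mathcal A(\nabla W)=0=\varphi_p(V'(r))x/r$, by the convention that the field vanishes at $0$. If $V'(r)\ne0$, positive homogeneity and evenness of $H$ give $H(\nabla W)=|V'(r)|H(\nabla H_0(x))=|V'(r)|$, using $H(\nabla H_0)=1$. The relation $\nabla H(t\xi)=\operatorname{sign}(t)\nabla H(\xi)$ from \eqref{eq:homog} gives
\[
\nabla H(\nabla W)=\operatorname{sign}(V'(r))\,\nabla H(\nabla H_0(x))=\operatorname{sign}(V'(r))\,\frac{x}{r}.
\]
Hence $\mathcal A(\nabla W)=|V'|^{p-1}\operatorname{sign}(V')\,x/r=\varphi_p(V'(r))\,x/r$.

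Next I compute the divergence. Set $g(r):=r^{N-1}\varphi_p(V'(r))\in C^1((a,b))$, so that $\mathcal A(\nabla W)=g(H_0(x))\,H_0(x)^{-N}x$. This vector field is $C^1$ on the open set $\{a<H_0<b\}\setminus\{0\}$, which excludes the origin because $H_0(x)>a\ge0$. The product rule, $\operatorname{div}x=N$ and $\langle\nabla H_0(x),x\rangle=H_0(x)$ then give
\[
\operatorname{div}\mathcal A(\nabla W)=g'(r)\,r^{-N}\,r+g(r)\bigl(-N r^{-N-1}r+N r^{-N}\bigr)=r^{1-N}g'(r).
\]
This is the claimed pointwise formula.

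For the weak sense I need $W\in W^{1,p}_{\rm loc}$ of the region, which holds because $W$ is $C^1$ there. Since $\mathcal A(\nabla W)$ is a $C^1$ vector field on the open set, the divergence theorem against any $\varphi\in C^1_c(\{a<H_0<b\})$ gives
\[
\int\langle\mathcal A(\nabla W),\nabla\varphi\rangle=-\int r^{1-N}g'(r)\varphi,
\]
so the weak and pointwise formulas agree. One point needs care: $\nabla H$ is only defined off $0$, and $H$ is not $C^1$ at $0$, so the chain rule through $\mathcal A$ near points with $V'=0$ must be avoided. The expression $g(H_0)H_0^{-N}x$ sidesteps this entirely, since it never differentiates $\mathcal A$; the regularity assumption on $g$, not on $V'$, is exactly what is used. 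I expect no step to be a real obstacle; the only subtle point is this handling of critical points, which the representation above resolves.
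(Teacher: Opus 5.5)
Your proof is correct and follows the paper's argument. You use the chain rule and Lemma~\ref{lem:dual} to get $\mathcal A(\nabla W)=\varphi_p(V'(r))\,x/r$, including the case $V'=0$, and then compute the divergence of this $C^1$ field directly via the Euler identity $\langle\nabla H_0(x),x\rangle=H_0(x)$, which also gives the weak formulation. The only cosmetic difference is that you write the field as $g(H_0)H_0^{-N}x$ with $g=r^{N-1}\varphi_p(V')$, whereas the paper writes it as $G(H_0)\,x/H_0$ with $G=\varphi_p(V')$ and computes $G'+\frac{N-1}{r}G$.
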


\begin{proof}
We have $\nabla W=V'(r)\nabla H_0(x)$ with $r=H_0(x)$. By \eqref{eq:homog} and Lemma~\ref{lem:dual}, $H(\nabla W)=|V'(r)|$ and, where $V'(r)\neq0$, $\nabla H(\nabla W)=\operatorname{sign}(V'(r))\,x/r$. Hence $\mathcal A(\nabla W)=G(r)\,x/r$ with $G=\varphi_p(V')$, and this holds also where $V'=0$. By the Euler identity $\langle\nabla H_0(x),x\rangle=H_0(x)$, for every $G\in C^1((a,b))$,
\begin{equation}\label{eq:div}
\operatorname{div}\Big(G(H_0(x))\frac{x}{H_0(x)}\Big)=G'(r)+\frac{N-1}{r}G(r)=r^{1-N}\big(r^{N-1}G(r)\big)' .
\end{equation}
The field $G(H_0(x))x/H_0(x)$ is of class $C^1$ in $\{a<H_0<b\}$, so the identity also holds in the weak sense.
\end{proof}

We use the comparison principle with a fixed right-hand side.

\begin{lemma}\label{lem:comparison}
Assume $(H)$, let $\Omega$ be a bounded domain, and let $v_1,v_2\in W^{1,p}(\Omega)\cap C(\overline\Omega)$. Suppose that $f_1,f_2\in L^\infty(\Omega)$ satisfy $f_1\le f_2$, that $-\Delta_p^Hv_1\le f_1$ and $-\Delta_p^Hv_2\ge f_2$ weakly in $\Omega$, and that $v_1\le v_2$ on $\partial\Omega$. Then $v_1\le v_2$ in $\Omega$.
\end{lemma}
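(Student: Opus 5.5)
We argue by testing the difference of the two weak inequalities with the positive part $w:=(v_1-v_2)^+$. Since $v_1,v_2\in W^{1,p}(\Omega)\cap C(\overline\Omega)$ and $v_1\le v_2$ on $\partial\Omega$, the function $w$ lies in $W^{1,p}(\Omega)$, is continuous on $\overline\Omega$ and vanishes on $\partial\Omega$. It is therefore in $W^{1,p}_0(\Omega)$. One way to see this is that $(v_1-v_2-\varepsilon)^+$ has compact support in $\Omega$ for every $\varepsilon>0$ and converges to $w$ in $W^{1,p}$ as $\varepsilon\to0$. The weak formulations are stated for $C^1_c$ test functions. By density they extend to nonnegative test functions in $W^{1,p}_0(\Omega)$, because $\mathcal A(\nabla v_i)\in L^{p/(p-1)}(\Omega)$ by $|\mathcal A(\xi)|\le M H(\xi)^{p-1}$, and $f_i\in L^\infty\subset L^{p/(p-1)}(\Omega)$ on the bounded set $\Omega$. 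Nonnegativity survives the approximation, since mollifying a nonnegative $W^{1,p}_0$ function yields nonnegative $C^1_c$ approximants.

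With $\varphi=w\ge0$, the inequality $-\Delta_p^Hv_1\le f_1$ gives $\int\langle\mathcal A(\nabla v_1),\nabla w\rangle\le\int f_1w$. The inequality $-\Delta_p^Hv_2\ge f_2$ gives $\int\langle\mathcal A(\nabla v_2),\nabla w\rangle\ge\int f_2w$. Subtracting and using $f_1\le f_2$ and $w\ge0$, we obtain
\[
\int_{\{v_1>v_2\}}\langle\mathcal A(\nabla v_1)-\mathcal A(\nabla v_2),\nabla v_1-\nabla v_2\rangle\le\int_\Omega(f_1-f_2)w\le0 .
\]
Here we used that $\nabla w=\nabla(v_1-v_2)$ a.e. on $\{v_1>v_2\}$ and $\nabla w=0$ a.e. elsewhere.

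By the strict monotonicity \eqref{eq:monotone}, the integrand is nonnegative, so it vanishes a.e. on $\{v_1>v_2\}$. This forces $\nabla v_1=\nabla v_2$ a.e. there, hence $\nabla w=0$ a.e. in $\Omega$. Since $w\in W^{1,p}_0(\Omega)$, Poincaré's inequality gives $w=0$. (Alternatively, $w$ is constant on the connected set $\Omega$ and vanishes on the boundary.) Thus $v_1\le v_2$ in $\Omega$.

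The only delicate point is the admissibility of $w$ as a test function: showing that $w\in W^{1,p}_0(\Omega)$ and that the inequalities extend from $C^1_c$ to nonnegative $W^{1,p}_0$ functions. The continuity up to $\overline\Omega$ and the $\varepsilon$-shift handle the first part, and the integrability of $\mathcal A(\nabla v_i)$ handles the second. The rest of the argument uses only \eqref{eq:monotone}.
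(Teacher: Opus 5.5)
Your argument is correct and is essentially the paper's proof. The paper tests both inequalities directly with the compactly supported function $w_\varepsilon=(v_1-v_2-\varepsilon)^+$, gets $w_\varepsilon=0$ from the strict monotonicity \eqref{eq:monotone}, and then lets $\varepsilon\to0$; this avoids your step of showing $w\in W^{1,p}_0(\Omega)$ and extending the weak inequalities to nonnegative $W^{1,p}_0$ test functions. Your route also works, because $w$ is admissible: mollify each compactly supported $w_\varepsilon$ to get nonnegative $C^1_c$ test functions, then let $\varepsilon\to0$ in $W^{1,p}$.
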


\begin{proof}
For $\varepsilon>0$ the function $w_\varepsilon=(v_1-v_2-\varepsilon)^+$ belongs to $W^{1,p}(\Omega)$ and vanishes near $\partial\Omega$, so it is an admissible nonnegative test function for both inequalities. Subtracting them gives
\[
\int_{\{v_1-v_2>\varepsilon\}}\langle\mathcal A(\nabla v_1)-\mathcal A(\nabla v_2),\nabla v_1-\nabla v_2\rangle\le\int_\Omega(f_1-f_2)w_\varepsilon\le0 .
\]
By \eqref{eq:monotone}, $\nabla w_\varepsilon=0$ a.e., so $w_\varepsilon=0$. Letting $\varepsilon\to0$ gives $v_1\le v_2$.
\end{proof}

Finally we recall the local regularity theory of Serrin \cite{Ser64} and Trudinger \cite{Tru67}. It applies to $\Delta_p^H$ because $\langle\mathcal A(\xi),\xi\rangle=H(\xi)^p\ge k_1^p|\xi|^p$ and $|\mathcal A(\xi)|\le Mk_2^{p-1}|\xi|^{p-1}$. By scaling, the constants below depend only on $N$, $p$, $s$, $k_1$, $k_2$, $M$, and an upper bound for the indicated scale-invariant quantity.

\begin{lemma}\label{lem:serrin}
Assume $(H)$, $1<p\le N$, and $s>\frac Np$.
\begin{enumerate}[label=(\roman*)]
\item If $u\in W^{1,p}_{\rm loc}(\Omega)$ is a weak solution of $-\Delta_p^Hu=d(x)\,\varphi_p(u)+f(x)$ in an open set $\Omega$, with $d,f\in L^s_{\rm loc}(\Omega)$, then $u$ is locally bounded and H\"older continuous in $\Omega$.
\item If $w\ge0$ is a weak solution of $\Delta_p^Hw=d(x)w^{p-1}$ in $B_{2R}(y)$ with $d\in L^s(B_{2R}(y))$, then $\sup_{B_R(y)}w\le K_*\inf_{B_R(y)}w$, where $K_*$ depends on $R^{p-\frac Ns}\|d\|_{L^s(B_{2R}(y))}$.
\item If $w\in W^{1,p}(B_{2R}(y))$, $w\ge0$, satisfies $-\Delta_p^Hw\le a(x)w^{p-1}$ weakly in $B_{2R}(y)$ with $0\le a\in L^s(B_{2R}(y))$, then
\[
\sup_{B_R(y)}w\le K_* R^{-\frac Np}\|w\|_{L^p(B_{2R}(y))},
\]
where $K_*$ depends on $R^{p-\frac Ns}\|a\|_{L^s(B_{2R}(y))}$.
\item If $z\ge0$ is a weak solution of $-\Delta_p^Hz=f$ in $B_{2\rho}(y)$ with $|f|\le F$, then $\sup_{B_\rho(y)}z\le K_*\big(\inf_{B_\rho(y)}z+1\big)$, where $K_*$ depends on $\rho$ and $F$.
\end{enumerate}
\end{lemma}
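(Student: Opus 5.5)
The plan is to reduce each item to the general theory of Serrin \cite{Ser64} for quasilinear equations $\operatorname{div}\mathcal A(x,u,\nabla u)=\mathcal B(x,u,\nabla u)$ and of Trudinger \cite{Tru67} for the Harnack inequality, after checking the structure conditions. For $\mathcal A(\xi)=H(\xi)^{p-1}\nabla H(\xi)$ we have $\langle\mathcal A(\xi),\xi\rangle=H(\xi)^p\ge k_1^p|\xi|^p$ by \eqref{eq:homog}, and $|\mathcal A(\xi)|\le Mk_2^{p-1}|\xi|^{p-1}$. These are exactly Serrin's conditions with $a_0=k_1^p$, $a=Mk_2^{p-1}$ and all lower-order coefficients of $\mathcal A$ equal to zero. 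The right-hand sides have the form $|\mathcal B|\le b_1(x)|u|^{p-1}+b_2(x)$, with $b_1=|d|$ or $a$, and $b_2=|f|$ or $F$. Serrin's hypotheses require $b_1,b_2\in L^{s}$ with $s>N/p$, which is our assumption. The restriction $p\le N$ is the range covered by Serrin; for $p>N$ everything is trivial by Morrey's embedding anyway.

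Item (i) then follows directly from Serrin's Theorems 1 and 8, giving local boundedness and H\"older continuity. For item (iii), I would use Serrin's local sup bound (his Theorem 1 in subsolution form, which only uses test functions $\eta^p w^\beta$ truncated and Moser iteration). Since $w\ge0$, $a\ge0$, the iteration only needs the one-sided inequality. This gives $\sup_{B_R}w\le K(R^{-N/p}\|w\|_{L^p(B_{2R})}+\text{inhomogeneous term})$, and here the inhomogeneous term vanishes. Item (ii) is Serrin's Harnack inequality (Theorem 5) with zero inhomogeneous term, so the additive constant disappears; for anisotropic $\mathcal A$, Trudinger's version also applies. Item (iv) is the Harnack inequality with a bounded inhomogeneity $f$, in which the term $\|f\|^{1/(p-1)}$ times a power of $\rho$ is absorbed into $K_*(\inf z+1)$.

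The dependence of the constants is handled by scaling. Set $w_R(x)=w(y+Rx)$ on $B_2(0)$; then $w_R$ solves the same type of equation with the same $\mathcal A$, which is homogeneous of degree $p-1$, and coefficient $R^pd(y+Rx)$. We have $\|R^pd(y+R\cdot)\|_{L^s(B_2)}=R^{p-N/s}\|d\|_{L^s(B_{2R})}$. Thus Serrin's constants on the unit scale, which depend only on $N,p,s$, the structure constants $k_1,k_2,M$, and the $L^s$ norm of the coefficient, become the stated scale-invariant quantities. In (iii), rescaling back also produces the factor $R^{-N/p}$. In (iv) no scale invariance is claimed, so the constant may depend on $\rho$ and $F$ directly.

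The main obstacle is bookkeeping rather than mathematics. Serrin's theorems are stated for solutions, and for (iii) one must confirm that the Moser iteration uses only the subsolution inequality with nonnegative test functions and $a\ge0$. One must also ensure the constants do not depend on any further property of $H$, for instance on the uniform convexity constant, which is not needed since only the growth and coercivity bounds enter. I would check this by following the first half of Serrin's proof, the sup estimate, verbatim with $\operatorname{div}\mathcal A(\nabla w)\ge -a w^{p-1}$.
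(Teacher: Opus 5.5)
Your proposal is correct and follows the same route as the paper. You check the coercivity and growth bounds for $\mathcal A(\xi)=H(\xi)^{p-1}\nabla H(\xi)$, invoke Serrin's local boundedness, Harnack and H\"older theorems (with Trudinger for the subsolution bound in (iii)), and get scale-invariant constants by rescaling. The paper adds two pieces of bookkeeping: Serrin's Harnack theorems pass from $B_{3R}$ to $B_R$, so a covering/chaining argument is needed for the radii $2R$ and $R$; and bounded, compactly supported $W^{1,p}$ test functions are admissible in the weak formulation by density.
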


In (i)--(iv) the weak formulations are used with bounded, compactly supported test functions in $W^{1,p}$, which is admissible by density. Parts (ii) and (iv) are \cite[Theorems~5 and~6]{Ser64}. These theorems are stated for a solution in a ball of radius $3R$ with the conclusion in the concentric ball of radius $R$, and the radii $2R$ and $R$ are obtained by a covering argument. Local boundedness in (i) and (iii) is proved in \cite{Ser64} for solutions and in \cite{Tru67} for subsolutions, and H\"older continuity in (i) follows from the Harnack inequality by the classical oscillation argument \cite{Ser64}.

\section{Integral estimates}\label{sec_integral}

Both equations are of the form $-\Delta_B^Hu=g(u)$ with $g'>0$. We prove one estimate for this class and then specialize it. Let $\mathcal O\subset\mathbb{R}$ be an open interval and $g\in C^1(\mathcal O)$ with $g'>0$. A weak solution of $-\Delta_B^Hu=g(u)$ is a function $u\in W^{1,p}_{\rm loc}(\mathbb{R}^N)$ with values in $\mathcal O$ a.e., $g(u)\in L^1_{\rm loc}(\mathbb{R}^N)$, and
\begin{equation}\label{eq:weak}
\int_{\mathbb{R}^N}B'(H(\nabla u))\langle\nabla H(\nabla u),\nabla\varphi\rangle=\int_{\mathbb{R}^N}g(u)\varphi\qquad\text{for all }\varphi\in C^1_c(\mathbb{R}^N).
\end{equation}
It is stable outside a closed set $K$ if $Q_u(\varphi)\ge\int g'(u)\varphi^2$ for all $\varphi\in C^1_c(\mathbb{R}^N\setminus K)$. For a fixed $\delta>0$ we set $\Omega_1:=\{x:H(\nabla u(x))<\delta\}$ if $\kappa>0$, $\Omega_1:=\emptyset$ if $\kappa=0$, and $\Omega_2:=\mathbb{R}^N\setminus\Omega_1$. These are sets of points $x$ defined through $\nabla u$, and the bounds \eqref{eq:small} and \eqref{eq:large} hold at $t=H(\nabla u(x))$ on $\Omega_1$ and $\Omega_2$, respectively.

\begin{lemma}\label{lem:key}
Assume that $(H)$ and $(B)$ hold with $p\ge2$. Let $u$ be a weak solution of $-\Delta_B^Hu=g(u)$ that is stable outside a closed set $K$. Let $a,b\in C^1(\mathcal O)$ satisfy the following conditions:
\begin{enumerate}[label=(\roman*)]
\item $a'$ and $b'$ are bounded on $\mathcal O$, $b\ge0$, and $|a(t)|+b(t)\le C(1+|t|)$ for $t\in\mathcal O$;
\item $b'$ has constant sign and does not vanish on $\mathcal O$;
\item there are $\mu,\nu>0$ with $\lambda\mu<\nu$ such that, for $t\in\mathcal O$,
\[
a'(t)^2=\mu|b'(t)|\qquad\text{and}\qquad |g(t)|\,b(t)\le\nu^{-1}g'(t)a(t)^2 .
\]
\end{enumerate}
Put $\Phi:=a^2+b^2|b'|^{-1}$ and $\Psi:=|a|^p|a'|^{2-p}+b^p|b'|^{1-p}$. Then there is $C>0$, depending only on $p,\gamma,\Gamma,\kappa,\delta,M,M_1,\mu$, and $\nu-\lambda\mu$, such that
\begin{equation}\label{eq:key}
\begin{aligned}
\int_{\mathbb{R}^N}g'(u)a(u)^2\psi^p+\int_{\mathbb{R}^N}B'(H(\nabla u))H(\nabla u)|b'(u)|\psi^p\\
\le C\int_{\Omega_1}\Phi(u)\psi^{p-2}|\nabla\psi|^2+C\int_{\Omega_2}\Psi(u)|\nabla\psi|^p
\end{aligned}
\end{equation}
for every nonnegative $\psi\in C^1_c(\mathbb{R}^N\setminus K)$.
\end{lemma}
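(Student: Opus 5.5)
We follow the Farina / Damascelli--Farina--Sciunzi--Valdinoci scheme, which has two tests. We test the stability inequality with $\varphi=a(u)\psi^{p/2}$ and the equation \eqref{eq:weak} with $\varphi=b(u)\psi^p$. The condition $a'^2=\mu|b'|$ is what lets the two gradient terms be compared.

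\textbf{Truncation.} To keep all integrals finite, every step is carried out first at a truncated level. Fix $k$ and let $T_k$ be the truncation of $u$ to the interval $\mathcal O\cap[-k,k]$, shrunk slightly inside $\mathcal O$ when an endpoint of $\mathcal O$ is finite (e.g. $[1/k,k]$ for $\mathcal O=(0,\infty)$). Replace $a(u),b(u)$ by $a(T_ku),b(T_ku)$. These compositions are bounded and lie in $W^{1,p}$ with compact support. By density (via mollification and the local integrability of $g(u)$ on the support), they are admissible test functions. Their gradients are $a'(u)\nabla u\,\chi_{\{|u|<k\}}$ and $b'(u)\nabla u\,\chi_{\{|u|<k\}}$.

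\textbf{The two tests.} The equation tested with $b(T_ku)\psi^p$ gives
\[
\int_{\{|u|<k\}}B'(H)H\,|b'(u)|\psi^p=\pm\Big(\int g(u)b(T_ku)\psi^p-p\int b(T_ku)\psi^{p-1}B'(H)\langle\nabla H,\nabla\psi\rangle\Big),
\]
where $H=H(\nabla u)$ and the sign is that of $b'$. We used $\langle\nabla H(\xi),\xi\rangle=H(\xi)$ from \eqref{eq:homog}.

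Stability tested with $a(T_ku)\psi^{p/2}$ gives
\[
\int g'(u)a(T_ku)^2\psi^p\le\int_{\{|u|<k\}}a'^2\psi^p\langle\mathcal M\nabla u,\nabla u\rangle+\text{cross}+\text{rest}.
\]
By \eqref{eq:homog}, $D^2H(\nabla u)\nabla u=0$, so $\langle\mathcal M\nabla u,\nabla u\rangle=B''(H)H^2\le\lambda B'(H)H$. Using $a'^2=\mu|b'|$, the principal term is therefore at most $\lambda\mu$ times the left side of the equation identity.

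Next we bound $|g(u)|b(T_ku)\le\nu^{-1}g'(u)a(T_ku)^2$. On $\{|u|<k\}$ this is hypothesis (iii). Off that set, $a,b$ are frozen at their values at the truncation level; I expect to handle this by (iii) at the level together with monotonicity of $g$, or else by choosing the truncation so that the leftover term has a favorable sign. This is a delicate point.

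Combining the two tests yields
\[
\Big(1-\tfrac{\lambda\mu}{\nu}\Big)\int g'(u)a(T_ku)^2\psi^p+(\cdots)\int_{\{|u|<k\}}B'(H)H|b'(u)|\psi^p\le\text{cross terms}.
\]

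\textbf{Cross terms and absorption.} The cross terms are of two kinds:
\[
\int|a||a'||\mathcal M||\nabla u||\nabla\psi|\psi^{p-1},\qquad \int|a|^2|\mathcal M||\nabla\psi|^2\psi^{p-2},\qquad \int b\,B'(H)|\nabla\psi|\psi^{p-1}.
\]
We split each over $\Omega_1$ and $\Omega_2$.

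On $\Omega_1$, \eqref{eq:small} gives $|\mathcal M|\le C$ and $B'(H)\le C_1H$, while $B'(H)H\ge c_1H^2$. Young's inequality, with a small multiple of $|b'|B'(H)H\psi^p$ absorbed, leaves terms $a^2\psi^{p-2}|\nabla\psi|^2$ and $b^2|b'|^{-1}\psi^{p-2}|\nabla\psi|^2$, i.e. $\Phi$.

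On $\Omega_2$, \eqref{eq:large} gives $|\mathcal M|\le CH^{p-2}$ and $B'\sim H^{p-1}$. Young's inequality with exponents $p/(p-1)$ and $p$ (for the $b$ term) and $p/(p-2)$ and $p/2$ (for the $a^2$ term) produces $b^p|b'|^{1-p}|\nabla\psi|^p$ and $|a|^p|a'|^{2-p}|\nabla\psi|^p$, i.e. $\Psi$. Here $|a'|^2H^p\psi^p=\mu|b'|H^p\psi^p$ is absorbed on the left.

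All quantities are finite at level $k$ because of the truncation and the compact support of $\psi$, so the absorption is legitimate. Since $\nu-\lambda\mu>0$, a small Young parameter fixes $C$ independently of $k$.

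\textbf{Removing the truncation.} Finally let $k\to\infty$. The left side increases by monotone convergence. The right side, with truncated $a,b$ inside $\Phi,\Psi$, is bounded by the untruncated $\Phi(u),\Psi(u)$, provided $\Phi,\Psi$ are monotone in the relevant sense; otherwise we bound by sup-type values using the linear growth in (i). If the right side is infinite, there is nothing to prove.

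\textbf{Main obstacle.} I expect the hardest step to be the handling of the region $\{|u|\ge k\}$: there the tests produce $g$-terms with frozen $a,b$ whose sign must be controlled through (iii) and $g'>0$.
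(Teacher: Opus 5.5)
Your overall scheme (stability tested with $a(u)\psi^{p/2}$, the equation tested with $b(u)\psi^p$, the principal term bounded via $B''(H)H^2\le\lambda B'(H)H$ and $a'^2=\mu|b'|$, and Young's inequality split over $\Omega_1$ and $\Omega_2$) is the paper's scheme. However, the truncation of $u$ you introduce creates a gap that you leave open, and the remedies you suggest do not close it.

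Freezing $a,b$ at the truncation level destroys hypothesis (iii) on the cut-off set. There you need $|g(u)|b(k)\le\nu^{-1}g'(u)a(k)^2$ for $u\ge k$. To deduce this from (iii) at $t=k$ you would need $|g|/g'$ to be nonincreasing, and monotonicity of $g$ says nothing about that. Take $g(t)=-t^{-q}$, $\nu=q$ and $a^2=tb$, which is the relation satisfied by the pair in Proposition~\ref{prop:est_sing} away from $0$. On $\{u>k\}$ the required inequality reads $u\,b(k)\le k\,b(k)$, which is false. Nor is there a favorable sign: after multiplying by the sign of $b'$, the term enters the bound for $\int B'(H)H|b'|\psi^p$ as $+\int u^{-q}b(T_ku)\psi^p$.

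Moreover, on $\{|u|\ge k\}$ the gradients of $a(T_ku)$ and $b(T_ku)$ vanish. The cross terms there, $a(k)^2\psi^{p-2}\langle\mathcal M(\nabla u)\nabla\psi,\nabla\psi\rangle$ and $b(k)B'(H(\nabla u))\psi^{p-1}|\nabla\psi|$, therefore cannot be absorbed by Young's inequality, since the left side carries no gradient term over that set. They are also not of the form $\Phi(u)$ or $\Psi(u)$, so your step ``removing the truncation'' does not apply to them.

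The missing observation is that inside the lemma no truncation of $u$ is needed, because hypothesis (i) already plays that role. It gives $a(u),b(u)\in W^{1,p}_{\rm loc}$, so $\phi=a(u)\psi^{p/2}$ lies in $W^{1,p}$ with compact support in $\mathbb{R}^N\setminus K$. Mollifying, $Q_u(\phi_j)\to Q_u(\phi)$ because $|\mathcal M(\nabla u)|\le C(\kappa+|\nabla u|)^{p-2}\in L^{p/(p-2)}_{\rm loc}$. Fatou's lemma on the potential term then gives $\int g'(u)a(u)^2\psi^p\le Q_u(\phi)<\infty$. By (iii), $|g(u)|b(u)\psi^p$ is integrable, so the equation may be tested with $\min\{b(u),n\}\psi^p$ (truncating the value of $b(u)$, not $u$), and $n\to\infty$ is handled by dominated convergence. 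Also $\int B'(H(\nabla u))H(\nabla u)|b'(u)|\psi^p<\infty$ simply because $b'$ is bounded. With both left-hand integrals finite, every absorption is legitimate.

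The truncation that copes with large or small values of $u$ is done one level up, in Propositions~\ref{prop:est_sing} and~\ref{prop:est_exp}. There $a,b$ are replaced by tangent-line extensions $a_k,b_k$, which preserve $b_k'\ne0$, $a_k'^2=\mu|b_k'|$ and (iii), exactly the properties that freezing loses. Your version could be repaired by showing that all leftover integrals over $\{|u|\ge k\}$ tend to $0$. For the $g$-term, however, this needs the integrability of $g'(u)a(u)^2\psi^p$ first, and at that point the truncation of $u$ is superfluous.
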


Note that $a'\ne0$ by (ii) and (iii), so $\Psi$ is well defined.

\begin{proof}
Fix $\psi$ and denote the right-hand side of \eqref{eq:key} without the constant by $\mathcal E(\psi)$. We may assume $\mathcal E(\psi)<\infty$. Put
\[
\mathcal I(\psi):=\int_{\mathbb{R}^N}g'(u)a(u)^2\psi^p,\qquad \mathcal J(\psi):=\int_{\mathbb{R}^N}B'(H(\nabla u))H(\nabla u)|b'(u)|\psi^p .
\]
Since $b'$ is bounded and $B'(t)t\le C(t^2+t^p)$, $\mathcal J(\psi)<\infty$.

\medskip\noindent\textit{Step 1: admissibility.} Let $\phi:=a(u)\psi^{p/2}$. By (i), $a(u)\in L^p_{\rm loc}$ and $\nabla a(u)=a'(u)\nabla u\in L^p_{\rm loc}$, and since $p\ge2$ the function $\psi^{p/2}$ is of class $C^1$. Thus $\phi\in W^{1,p}(\mathbb{R}^N)$ has compact support in $\mathbb{R}^N\setminus K$. Mollifying, we find $\phi_j\in C^1_c(\mathbb{R}^N\setminus K)$, supported in a fixed compact set, with $\phi_j\to\phi$ in $W^{1,p}$ and a.e. By \eqref{eq:Mbound}, $|\mathcal M(\nabla u)|\le C(\kappa+|\nabla u|)^{p-2}\in L^{p/(p-2)}_{\rm loc}$, so $Q_u(\phi_j)\to Q_u(\phi)<\infty$, while Fatou's lemma gives $\int g'(u)\phi^2\le\liminf_j\int g'(u)\phi_j^2$. Therefore
\begin{equation}\label{eq:stab_phi}
\mathcal I(\psi)=\int_{\mathbb{R}^N}g'(u)\phi^2\le Q_u(\phi)<\infty .
\end{equation}
Next we show that \eqref{eq:weak} holds with $\varphi=b(u)\psi^p$. Let $T_n(s)=\min\{s,n\}$ and $\varphi_n:=T_n(b(u))\psi^p$. This function is bounded, belongs to $W^{1,p}$, and has compact support, so by mollification and \eqref{eq:Mbound} it is admissible in \eqref{eq:weak}. Since $\nabla\varphi_n=\chi_{\{b(u)<n\}}b'(u)\psi^p\nabla u+pT_n(b(u))\psi^{p-1}\nabla\psi$ and $\langle\nabla H(\nabla u),\nabla u\rangle=H(\nabla u)$, we get
\begin{align*}
\int_{\{b(u)<n\}}B'(H(\nabla u))H(\nabla u)b'(u)\psi^p+p\int B'(H(\nabla u))\langle\nabla H(\nabla u),\nabla\psi\rangle T_n(b(u))\psi^{p-1}\\
=\int g(u)T_n(b(u))\psi^p .
\end{align*}
The three integrands are dominated, respectively, by
\[
B'(H(\nabla u))H(\nabla u)|b'(u)|\psi^p,\qquad CB'(H(\nabla u))b(u)|\nabla\psi|,\qquad |g(u)|b(u)\psi^p,
\]
which are integrable: the first because $\mathcal J(\psi)<\infty$, the second by \eqref{eq:Mbound} and $b(u)\in L^p_{\rm loc}$, and the third because $|g(u)|b(u)\le\nu^{-1}g'(u)a(u)^2$ and \eqref{eq:stab_phi}. Letting $n\to\infty$ and multiplying by the sign of $b'$, we obtain
\begin{equation}\label{eq:test_b}
\mathcal J(\psi)\le pM\int_{\mathbb{R}^N}B'(H(\nabla u))b(u)\psi^{p-1}|\nabla\psi|+\int_{\mathbb{R}^N}|g(u)|b(u)\psi^p .
\end{equation}

\medskip\noindent\textit{Step 2: the stability inequality.} We have $\nabla\phi=a'(u)\psi^{p/2}\nabla u+\frac p2a(u)\psi^{\frac{p-2}2}\nabla\psi$. By \eqref{eq:homog},
\[
\langle\nabla H(\nabla u),\nabla\phi\rangle=a'(u)H(\nabla u)\psi^{p/2}+\tfrac p2a(u)\psi^{\frac{p-2}2}\langle\nabla H(\nabla u),\nabla\psi\rangle,
\]
and, since $D^2H(\nabla u)\nabla u=0$,
\[
\langle D^2H(\nabla u)\nabla\phi,\nabla\phi\rangle=\tfrac{p^2}4a(u)^2\psi^{p-2}\langle D^2H(\nabla u)\nabla\psi,\nabla\psi\rangle .
\]
On $\{\nabla u=0\}$ the integrand of $Q_u(\phi)$ is $\langle\mathcal M(0)\nabla\phi,\nabla\phi\rangle$ with $\nabla\phi=\frac p2a(u)\psi^{\frac{p-2}2}\nabla\psi$. It vanishes if $\kappa=0$ and $p>2$. Otherwise it is at most $Ca(u)^2\psi^{p-2}|\nabla\psi|^2$, and either $\kappa>0$, in which case $\{\nabla u=0\}\subset\Omega_1$ and $a^2\le\Phi$, or $\kappa=0$ and $p=2$, in which case $\Omega_2=\mathbb{R}^N$ and $a^2\le\Psi$. Its contribution is therefore at most $C\mathcal E(\psi)$, and in the integrals below we integrate over $\{\nabla u\ne0\}$. Writing $H=H(\nabla u)$, $B'=B'(H(\nabla u))$, $B''=B''(H(\nabla u))$, and using $|\nabla H|\le M$ and $|D^2H(\nabla u)|\le M_1/H$, we get from \eqref{eq:stab_phi}
\[
\mathcal I(\psi)\le I_1+I_2+I_3+I_4+C\mathcal E(\psi),
\]
where
\begin{gather*}
I_1=\int B''H^2a'(u)^2\psi^p,\qquad I_2=\frac{p^2M^2}4\int B''a(u)^2\psi^{p-2}|\nabla\psi|^2,\\
I_3=pM\int B''H|a'(u)||a(u)|\psi^{p-1}|\nabla\psi|,\qquad I_4=\frac{p^2M_1}4\int\frac{B'}{H}a(u)^2\psi^{p-2}|\nabla\psi|^2 .
\end{gather*}
Let $\varepsilon\in(0,1)$. On $\Omega_1$, \eqref{eq:small} gives $B''\le C_3$, $B'/H\le C_1$, and $c_3H^2\le B''H^2$, and Young's inequality gives
\[
H|a'||a|\psi^{p-1}|\nabla\psi|\le\varepsilon' H^2a'^2\psi^p+C_{\varepsilon'}a^2\psi^{p-2}|\nabla\psi|^2 .
\]
On $\Omega_2$, \eqref{eq:large} gives $B''\le C_4H^{p-2}$, $B'/H\le C_2H^{p-2}$, and $c_4H^p\le B''H^2$. If $p>2$, Young's inequality with exponents $\frac p{p-2}$ and $\frac p2$ gives
\[
H^{p-2}a^2\psi^{p-2}|\nabla\psi|^2=\big(H^pa'^2\psi^p\big)^{\frac{p-2}p}\big(|a|^p|a'|^{2-p}|\nabla\psi|^p\big)^{\frac2p}\le\varepsilon'H^pa'^2\psi^p+C_{\varepsilon'}|a|^p|a'|^{2-p}|\nabla\psi|^p,
\]
and this is an identity with $\varepsilon'=0$, $C_{\varepsilon'}=1$ if $p=2$. In the same way,
\[
H^{p-1}|a'||a|\psi^{p-1}|\nabla\psi|\le\varepsilon'H^pa'^2\psi^p+C_{\varepsilon'}H^{p-2}a^2\psi^{p-2}|\nabla\psi|^2 .
\]
Since $a^2\le\Phi$ and $|a|^p|a'|^{2-p}\le\Psi$, choosing $\varepsilon'$ small in terms of $\varepsilon$ we obtain $I_j\le\varepsilon I_1+C_\varepsilon\mathcal E(\psi)$ for $j=2,3,4$. Finally $B''H^2\le\lambda B'H$ and $a'^2=\mu|b'|$, so $I_1\le\lambda\mu\mathcal J(\psi)$. Hence
\begin{equation}\label{eq:step1}
\mathcal I(\psi)\le(1+3\varepsilon)\lambda\mu\mathcal J(\psi)+C_\varepsilon\mathcal E(\psi).
\end{equation}

\medskip\noindent\textit{Step 3: the equation.} We estimate the first term on the right of \eqref{eq:test_b}. On $\Omega_1$, by \eqref{eq:small} and Young's inequality,
\[
pMB'b\psi^{p-1}|\nabla\psi|\le pMC_1Hb\psi^{p-1}|\nabla\psi|\le\varepsilon c_1H^2|b'|\psi^p+C_\varepsilon b^2|b'|^{-1}\psi^{p-2}|\nabla\psi|^2,
\]
and $c_1H^2|b'|\le B'H|b'|$. On $\Omega_2$, by \eqref{eq:large} and Young's inequality with exponents $\frac p{p-1}$ and $p$,
\[
pMB'b\psi^{p-1}|\nabla\psi|\le pMC_2H^{p-1}b\psi^{p-1}|\nabla\psi|\le\varepsilon c_2H^p|b'|\psi^p+C_\varepsilon b^p|b'|^{1-p}|\nabla\psi|^p,
\]
and $c_2H^p|b'|\le B'H|b'|$. Inserting these bounds in \eqref{eq:test_b}, absorbing $\varepsilon\mathcal J(\psi)<\infty$, and using $|g|b\le\nu^{-1}g'a^2$, we get
\begin{equation}\label{eq:step2}
(1-\varepsilon)\mathcal J(\psi)\le C_\varepsilon\mathcal E(\psi)+\nu^{-1}\mathcal I(\psi).
\end{equation}

\medskip\noindent\textit{Step 4: conclusion.} From \eqref{eq:step1} and \eqref{eq:step2},
\[
\Big(1-\frac{1+3\varepsilon}{1-\varepsilon}\cdot\frac{\lambda\mu}{\nu}\Big)\mathcal I(\psi)\le C_\varepsilon\mathcal E(\psi).
\]
Since $\lambda\mu<\nu$, we can fix $\varepsilon$, depending on $\lambda\mu/\nu$, so that the coefficient is positive. As $\mathcal I(\psi)<\infty$ by \eqref{eq:stab_phi}, we obtain $\mathcal I(\psi)\le C\mathcal E(\psi)$, and then $\mathcal J(\psi)\le C\mathcal E(\psi)$ by \eqref{eq:step2}.
\end{proof}

We now choose the truncations. For \eqref{eq:sing} we have $\mathcal O=(0,\infty)$ and $g(t)=-t^{-q}$, so $g'(t)=qt^{-q-1}$.

\begin{proposition}\label{prop:est_sing}
Assume that $(H)$ and $(B)$ hold with $p\ge2$, let $q>0$, and let $u$ be a weak solution of \eqref{eq:sing} that is stable outside a closed set $K$. Assume $\alpha_0<1-p$ and let $\alpha\in(\alpha_0,1-p]$. Then
\begin{equation}\label{eq:est_sing}
\int_{\mathbb{R}^N}\Big(B'(H(\nabla u))H(\nabla u)u^{\alpha-1}+u^{\alpha-q}\Big)\eta^{pm}
\le C\int_{\Omega_1}|\nabla\eta|^{\frac{2(q-\alpha)}{q+1}}+C\int_{\Omega_2}|\nabla\eta|^{\frac{p(q-\alpha)}{q+p-1}}
\end{equation}
for every $\eta\in C^1_c(\mathbb{R}^N\setminus K)$ with $0\le\eta\le1$ and every
\begin{equation}\label{eq:m_sing}
m\ge\max\Big\{\frac{q-\alpha}{q+p-1},\ \frac{2(q-\alpha)}{p(q+1)}\Big\},
\end{equation}
with $C$ depending on $m$ but not on $\eta$. When $p=2$ and $\alpha=-1$, the first term on the left must be omitted.
\end{proposition}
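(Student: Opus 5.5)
The plan is to apply Lemma~\ref{lem:key} with $g(t)=-t^{-q}$, $\mathcal O=(0,\infty)$, $\psi=\eta^m$, and power-type functions $a,b$. Because the lemma needs $a',b'$ bounded, these powers have to be truncated near $t=0$. The absorption of the right-hand side into the left will be done while the truncation is still in place, and only then will the truncation be removed.

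\textbf{Choice of $a,b$.} The model choice is
\[
a(t)=t^{\frac{\alpha+1}2},\qquad b(t)=\frac{t^{\alpha}}{|\alpha|},\qquad b'(t)=-t^{\alpha-1}.
\]
Since $\alpha\le1-p<0$, we have $b\ge0$ and $b'<0$. These give:
\begin{itemize}
\item $g'(t)a^2=q\,t^{\alpha-q}$;
\item $a'^2=\mu|b'|$ with $\mu=\frac{(\alpha+1)^2}4$;
\item $|g|b=\frac{t^{\alpha-q}}{|\alpha|}=\nu^{-1}g'a^2$ with $\nu=q|\alpha|$.
\end{itemize}
The condition $\lambda\mu<\nu$ reads $\lambda(\alpha+1)^2+4q\alpha<0$. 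This holds exactly for $\alpha$ strictly between the two roots $-1-\frac{2q\pm2\sqrt{q(q+\lambda)}}{\lambda}$. The smaller root is $\alpha_0$ and the larger exceeds $-1$, so every $\alpha\in(\alpha_0,1-p]$ qualifies provided $\alpha\ne-1$.

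Then $\Phi\sim t^{\alpha+1}$ and $\Psi\sim t^{\alpha+p-1}$. The left side of \eqref{eq:key} is $q\int u^{\alpha-q}\psi^p+\int B'HU^{\alpha-1}\psi^p$ with $U=u$, as desired.

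\textbf{The case $p=2$, $\alpha=-1$.} Here $\mu=0$, so Lemma~\ref{lem:key} does not apply. Instead I test stability directly with $\phi=\psi$ (that is, $a\equiv1$), which gives $q\int u^{-1-q}\psi^2\le Q_u(\psi)\le C\int|\nabla\psi|^2$. There is no $\mathcal J$ term, which is why the first term on the left is omitted in this case.

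\textbf{Truncation.} For $k\ge1$, put $s_k(t)=\max\{t,1/k\}$ and define
\[
b_k'(t)=-s_k(t)^{\alpha-1},\qquad b_k(t)=\int_t^\infty|b_k'|,\qquad a_k'(t)=-\sqrt\mu\, s_k(t)^{\frac{\alpha-1}2},\qquad a_k(t)=\int_t^\infty|a_k'|.
\]
For $t\ge1/k$ these coincide with the model functions. Then $a_k',b_k'$ are bounded, $a_k,b_k$ grow at most linearly as $t\to0$, and they decay at infinity, so (i) and (ii) hold. The identity $a_k'^2=\mu|b_k'|$ holds by construction.

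The point to check is the inequality $t^{-q}b_k\le\nu_1^{-1}qt^{-q-1}a_k^2$ on $(0,1/k)$, uniformly in $k$, for some $\nu_1$ with $\lambda\mu<\nu_1<\nu$. For $t<1/k$ this is $t\,b_k(t)\le(q/\nu_1)\,a_k(t)^2$, where $b_k$ is linear in $(1/k-t)$ and $a_k^2$ is quadratic in it. After scaling $t=s/k$ this becomes a $k$-independent elementary inequality on $s\in(0,1)$. I expect it to hold, using the slack $\nu_1<\nu$ at $s=1$ and the decrease of the left side relative to the right as $s\downarrow0$. If it fails, I would adjust the slope of $a_k$ below $1/k$, since only $a_k'^2=\mu|b_k'|$ has to be preserved. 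This verification is the step I expect to be the main obstacle.

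I also need the pointwise bounds, uniform in $k$,
\[
\Phi_k\le C\,(t^{-q-1}a_k^2)^{\theta'}\quad\text{and}\quad\Psi_k\le C\,(t^{-q-1}a_k^2)^{\theta},\qquad\theta=\frac{\alpha+p-1}{\alpha-q},\quad\theta'=\frac{\alpha+1}{\alpha-q}.
\]
For $t\ge1/k$ these are equalities up to constants. For $t<1/k$ they follow from $a_k\ge c\,k^{-(\alpha+1)/2}$, $|a_k'|\sim k^{(1-\alpha)/2}$, and the fact that $\theta,\theta'\in[0,1)$.

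\textbf{Absorption.} Apply \eqref{eq:key} with $\psi=\eta^m$, so $|\nabla\psi|\le m\eta^{m-1}|\nabla\eta|$. On $\Omega_2$, Young's inequality with exponents $1/\theta$ and $1/(1-\theta)$ gives
\[
\Psi_k(u)\eta^{(m-1)p}|\nabla\eta|^p\le\varepsilon\, g'(u)a_k(u)^2\eta^{mp}+C_\varepsilon\,\eta^{\,mp-\frac{p}{1-\theta}}|\nabla\eta|^{\frac{p}{1-\theta}}.
\]
Here $\frac{p}{1-\theta}=\frac{p(q-\alpha)}{q+p-1}$, and $\eta^{\,mp-p/(1-\theta)}\le1$ precisely when $m\ge\frac{q-\alpha}{q+p-1}$. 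The $\Omega_1$ term is treated the same way with $\psi^{p-2}|\nabla\psi|^2$: it produces $|\nabla\eta|^{\frac{2(q-\alpha)}{q+1}}$ and requires $m\ge\frac{2(q-\alpha)}{p(q+1)}$. Together these are exactly \eqref{eq:m_sing}.

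The absorbed quantity $\int g'(u)a_k(u)^2\eta^{mp}$ is finite by \eqref{eq:stab_phi}, so absorbing it is legitimate at level $k$. This yields \eqref{eq:est_sing} with $u^{\alpha-q}$ and $u^{\alpha-1}$ replaced by $t^{-q-1}a_k^2$ and $|b_k'|$ evaluated at $u$, with $C$ independent of $k$. These truncated integrands increase to $q\,u^{\alpha-q}$ and $u^{\alpha-1}$ as $k\to\infty$, so monotone convergence finishes the proof.
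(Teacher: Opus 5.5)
Your proposal is correct and follows the paper's proof: capping the derivatives at $s_k(t)=\max\{t,1/k\}$ gives exactly the paper's tangent-line continuation of $a,b$ at $t=1/k$ (up to your normalization $b=t^\alpha/|\alpha|$, which rescales $\mu,\nu$ by $|\alpha|$). The special case $p=2$, $\alpha=-1$, the bounds $\Phi_k\le CF_k^{\theta'}$ and $\Psi_k\le CF_k^{\theta}$, and the absorption at level $k$ before letting $k\to\infty$ are also handled as in the paper. The step you flag as the main obstacle is closed by the identity $a_k(t)^2-|\alpha|\,t\,b_k(t)=\frac{(\alpha-1)^2}{4}k^{-(\alpha+1)}(1-kt)^2\ge0$ for $0<t<\frac1k$, so hypothesis (iii) of Lemma~\ref{lem:key} holds with $\nu=q|\alpha|$ itself and no slack $\nu_1<\nu$ is needed.
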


\begin{proof}
If $p=2$ and $\alpha=-1$, both exponents on the right equal $2$ and $m\ge1$. The stability inequality with $\varphi=\eta^m$ and \eqref{eq:Mbound} give $q\int u^{-q-1}\eta^{2m}\le C\int m^2\eta^{2m-2}|\nabla\eta|^2\le C\int|\nabla\eta|^2$, which is \eqref{eq:est_sing}. So let $\alpha\ne-1$. Then $\alpha<-1$, and we use
\[
\mu:=\frac{(\alpha+1)^2}{4|\alpha|},\qquad \theta_1:=\frac{-1-\alpha}{q-\alpha},\qquad \theta_2:=\frac{1-p-\alpha}{q-\alpha}.
\]
Both $\theta_1$ and $\theta_2$ lie in $[0,1)$.

The choice of $\alpha$ gives $\lambda\mu<q$. Indeed, with $s=|\alpha|$, the inequality $\lambda(\alpha+1)^2<4q|\alpha|$ reads $\lambda s^2-2(\lambda+2q)s+\lambda<0$, that is, $s_-<s<s_+$, where $s_\pm$ are the roots. Since $s_+=-\alpha_0$ and $s_-s_+=1$, we have $s_-<1<s$ and $s<s_+$.

For $k\in\mathbb{N}$, let $a_k$ and $b_k$ be equal to $t^{\frac{\alpha+1}2}$ and $t^\alpha$ for $t\ge\frac1k$, and to their tangent lines at $t=\frac1k$ for $0<t<\frac1k$:
\[
a_k(t)=k^{-\frac{\alpha+1}2}+\tfrac{\alpha+1}2k^{\frac{1-\alpha}2}\big(t-\tfrac1k\big),\qquad
b_k(t)=k^{-\alpha}+\alpha k^{1-\alpha}\big(t-\tfrac1k\big)\qquad(0<t<\tfrac1k).
\]
These functions are positive, decreasing, of class $C^1$, bounded with bounded derivatives, and $a_k'^2=\mu|b_k'|$. A direct computation gives
\[
a_k(t)^2-tb_k(t)=\frac{(\alpha-1)^2}{4k^{\alpha+1}}(1-kt)^2\qquad\text{for }0<t<\tfrac1k,
\]
and $a_k^2=tb_k$ for $t\ge\frac1k$. Hence $t^{-q}b_k\le t^{-q-1}a_k^2=q^{-1}g'a_k^2$, and Lemma~\ref{lem:key} applies with $\nu=q$. Put $F_k(t):=t^{-q-1}a_k(t)^2$ and let $\Phi_k$ and $\Psi_k$ be the functions $\Phi$ and $\Psi$ of Lemma~\ref{lem:key} for $a_k,b_k$. We claim that
\begin{equation}\label{eq:trunc_sing}
\Phi_k\le C F_k^{\theta_1},\qquad \Psi_k\le CF_k^{\theta_2}\qquad\text{on }(0,\infty),
\end{equation}
with $C=C(p,q,\alpha)$. For $t\ge\frac1k$, $F_k(t)=t^{\alpha-q}$, $\Phi_k(t)=(1+|\alpha|^{-1})t^{\alpha+1}$, and $\Psi_k(t)=\big((\frac{|\alpha+1|}2)^{2-p}+|\alpha|^{1-p}\big)t^{\alpha+p-1}$, while $(\alpha-q)\theta_1=\alpha+1$ and $(\alpha-q)\theta_2=\alpha+p-1$. For $0<t<\frac1k$, the bounds $k^{-\frac{\alpha+1}2}\le a_k\le\frac{1-\alpha}2k^{-\frac{\alpha+1}2}$, $b_k\le(1-\alpha)k^{-\alpha}$, $|a_k'|=\frac{|\alpha+1|}2k^{\frac{1-\alpha}2}$, and $|b_k'|=|\alpha|k^{1-\alpha}$ give $F_k\ge k^{q-\alpha}$, $\Phi_k\le Ck^{-1-\alpha}$, and $\Psi_k\le Ck^{1-p-\alpha}$, which is \eqref{eq:trunc_sing}.

Apply Lemma~\ref{lem:key} with $\psi=\eta^m$. Note that $m\ge1$ by \eqref{eq:m_sing}, since $q-\alpha\ge q+p-1$. By \eqref{eq:trunc_sing} and Young's inequality with exponents $\frac1{\theta_i}$ and $\frac1{1-\theta_i}$,
\begin{align*}
Cm^2\Phi_k(u)\eta^{pm-2}|\nabla\eta|^2&\le\tfrac q4F_k(u)\eta^{\frac{pm-2}{\theta_1}}+C|\nabla\eta|^{\frac{2(q-\alpha)}{q+1}},\\
Cm^p\Psi_k(u)\eta^{p(m-1)}|\nabla\eta|^p&\le\tfrac q4F_k(u)\eta^{\frac{p(m-1)}{\theta_2}}+C|\nabla\eta|^{\frac{p(q-\alpha)}{q+p-1}}
\end{align*}
(if $\theta_i=0$ the corresponding bound holds without the first term). Condition \eqref{eq:m_sing} is equivalent to $\frac{pm-2}{\theta_1}\ge pm$ together with $\frac{p(m-1)}{\theta_2}\ge pm$, so both powers of $\eta$ are at most $\eta^{pm}$. The left-hand side of \eqref{eq:key} contains $\int g'(u)a_k(u)^2\eta^{pm}=q\int F_k(u)\eta^{pm}$, which is finite by \eqref{eq:stab_phi}. Absorbing half of it, we obtain
\[
\int_{\mathbb{R}^N}\Big(B'(H(\nabla u))H(\nabla u)|b_k'(u)|+F_k(u)\Big)\eta^{pm}\le C\int_{\Omega_1}|\nabla\eta|^{\frac{2(q-\alpha)}{q+1}}+C\int_{\Omega_2}|\nabla\eta|^{\frac{p(q-\alpha)}{q+p-1}}
\]
with $C$ independent of $k$. For each $t>0$ we have $F_k(t)=t^{\alpha-q}$ and $|b_k'(t)|=|\alpha|t^{\alpha-1}$ as soon as $k\ge1/t$. Fatou's lemma gives \eqref{eq:est_sing}.
\end{proof}

For \eqref{eq:exp} we have $\mathcal O=\mathbb{R}$ and $g(t)=g'(t)=e^t$.

\begin{proposition}\label{prop:est_exp}
Assume that $(H)$ and $(B)$ hold with $p\ge2$, and let $u$ be a weak solution of \eqref{eq:exp} that is stable outside a closed set $K$. Let $\alpha\in(0,\frac4\lambda)$. Then
\begin{equation}\label{eq:est_exp}
\int_{\mathbb{R}^N}\Big(B'(H(\nabla u))H(\nabla u)e^{\alpha u}+e^{(\alpha+1)u}\Big)\eta^{pm}
\le C\int_{\Omega_1}|\nabla\eta|^{2(\alpha+1)}+C\int_{\Omega_2}|\nabla\eta|^{p(\alpha+1)}
\end{equation}
for every $\eta\in C^1_c(\mathbb{R}^N\setminus K)$ with $0\le\eta\le1$ and every $m\ge\alpha+1$, with $C$ depending on $m$ but not on $\eta$.
\end{proposition}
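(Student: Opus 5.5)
We follow the proof of Proposition~\ref{prop:est_sing}, with truncations that make Lemma~\ref{lem:key} applicable on $\mathcal O=\mathbb{R}$. The untruncated choice would be $a(t)=e^{\frac\alpha2t}$ and $b(t)=e^{\alpha t}$. Then $a'^2=\frac{\alpha^2}4e^{\alpha t}=\mu b'$ with $\mu=\frac\alpha4$, and $|g|b=e^{(\alpha+1)t}=g'a^2$, so $\nu=1$. The condition $\lambda\mu<\nu$ is exactly $\alpha<\frac4\lambda$. These functions, however, violate the linear growth bound (i) as $t\to+\infty$. For $k\in\mathbb{N}$ we therefore let $a_k,b_k$ equal $e^{\frac\alpha2t}$ and $e^{\alpha t}$ for $t\le k$, and equal their tangent lines at $t=k$ for $t>k$. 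Then $a_k,b_k$ are $C^1$, increasing, positive on $(-\infty,k]$, and have bounded derivatives. For $t>k$ we have $a_k'=\frac\alpha2e^{\frac\alpha2k}$ and $b_k'=\alpha e^{\alpha k}$, so $a_k'^2=\mu b_k'$ still holds. Both are bounded as $t\to-\infty$, so (i) holds, and (ii) is clear.

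For (iii) with $\nu=1$ we need $b_k\le a_k^2$ for $t>k$. Write $s=t-k\ge0$. Then $a_k^2=e^{\alpha k}(1+\frac\alpha2s)^2=e^{\alpha k}(1+\alpha s+\frac{\alpha^2}4s^2)\ge e^{\alpha k}(1+\alpha s)=b_k$. So Lemma~\ref{lem:key} applies with $g'=e^t$, and we set $F_k:=e^ta_k^2$.

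Next we bound $\Phi_k$ and $\Psi_k$ by powers of $F_k$. Let $\theta_1:=\frac{\alpha}{\alpha+1}$ and $\theta_2:=\frac{\alpha+p-1}{\alpha+1}$ (for $\theta_2$, see the caveat below). For $t\le k$ we have $F_k=e^{(\alpha+1)t}$, $\Phi_k=(1+\alpha^{-1})e^{\alpha t}=CF_k^{\theta_1}$, and $\Psi_k=Ce^{\alpha t}$. This shows that the exponent relevant for $\Psi$ is also $\alpha$, not $\alpha+p-1$: indeed $|a|^p|a'|^{2-p}=e^{\frac{\alpha p}2t}e^{\frac{\alpha(2-p)}2t}=e^{\alpha t}$ and $b^p|b'|^{1-p}=e^{\alpha t}$, up to constants. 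Hence both $\Phi_k$ and $\Psi_k$ are $\le CF_k^{\theta}$ with $\theta=\frac\alpha{\alpha+1}$. For $t>k$, with $s=t-k$, we have $a_k\ge e^{\frac\alpha2k}$, $b_k\le e^{\alpha k}(1+\alpha s)\le C e^{\alpha k}e^{s}$, and $|a_k'|,|b_k'|$ are comparable to $e^{\frac\alpha2k}$ and $e^{\alpha k}$. This gives $\Phi_k\le Ce^{\alpha k}(1+s)^2$ and $\Psi_k\le Ce^{\alpha k}(1+s)^p$. On the other side, $F_k\ge e^{t}e^{\alpha k}=e^{(\alpha+1)k}e^{s}$, so $F_k^\theta\ge e^{\alpha k}e^{\theta s}$. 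Since $(1+s)^p\le C_\theta e^{\theta s}$, we get $\Phi_k+\Psi_k\le CF_k^\theta$ uniformly in $k$. Checking this tail comparison carefully is the one delicate point: the polynomial growth of the tangent lines must be absorbed by the exponential factor $e^u$ inside $F_k$.

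We then apply Lemma~\ref{lem:key} with $\psi=\eta^m$, where $m\ge\alpha+1\ge1$. Young's inequality with exponents $\frac1\theta=\frac{\alpha+1}\alpha$ and $\frac1{1-\theta}=\alpha+1$ gives
\[
\Phi_k(u)\eta^{pm-2}|\nabla\eta|^2\le\varepsilon F_k(u)\eta^{\frac{pm-2}{\theta}}+C|\nabla\eta|^{2(\alpha+1)},\qquad
\Psi_k(u)\eta^{p(m-1)}|\nabla\eta|^p\le\varepsilon F_k(u)\eta^{\frac{p(m-1)}\theta}+C|\nabla\eta|^{p(\alpha+1)}.
\]
The condition $m\ge\alpha+1$ gives $\frac{p(m-1)}\theta\ge pm$, which is equivalent to $m\ge\alpha+1$. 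It also gives $\frac{pm-2}\theta\ge pm$, which is equivalent to $pm\ge2(\alpha+1)$ and is implied. Hence both powers of $\eta$ are $\le\eta^{pm}$. The term $\int F_k(u)\eta^{pm}$ is finite by \eqref{eq:stab_phi}, so we absorb it at the truncated level and obtain a bound for $\int(B'H|b_k'(u)|+F_k(u))\eta^{pm}$ with constant independent of $k$. Finally, for each fixed $t$ we have $F_k(t)=e^{(\alpha+1)t}$ and $b_k'(t)=\alpha e^{\alpha t}$ once $k\ge t$, and Fatou's lemma yields \eqref{eq:est_exp}.
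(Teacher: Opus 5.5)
Your proposal is correct and follows the paper's proof essentially step for step. It uses the same tangent-line truncations of $e^{\alpha t/2}$ and $e^{\alpha t}$ at $t=k$, Lemma~\ref{lem:key} with $\mu=\frac\alpha4$ and $\nu=1$, the uniform bounds $\Phi_k,\Psi_k\le CF_k^{\theta}$ with $\theta=\frac\alpha{\alpha+1}$ (the factor $e^u$ in $F_k$ absorbs the polynomial growth of the tangent lines), Young's inequality, absorption at the truncated level, and Fatou's lemma. The provisional exponent $\theta_2=\frac{\alpha+p-1}{\alpha+1}$ is never used and can be deleted, since, as you observe yourself, $\Psi$ also scales like $e^{\alpha t}$.
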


\begin{proof}
For $k\in\mathbb{N}$, let $a_k(t)=e^{\alpha t/2}$ and $b_k(t)=e^{\alpha t}$ for $t<k$, continued by their tangent lines at $t=k$:
\[
a_k(t)=e^{\alpha k/2}(1+\sigma),\qquad b_k(t)=e^{\alpha k}(1+2\sigma),\qquad \sigma:=\tfrac\alpha2(t-k)\qquad(t\ge k).
\]
Then $a_k,b_k\in C^1(\mathbb{R})$ are positive and increasing, have bounded derivatives and at most linear growth, and satisfy $a_k'^2=\frac\alpha4b_k'$. Since $(1+\sigma)^2\ge1+2\sigma$, we have $a_k^2\ge b_k$, so $e^tb_k\le e^ta_k^2$. Thus Lemma~\ref{lem:key} applies with $\mu=\frac\alpha4$ and $\nu=1$, and $\lambda\mu<\nu$ because $\alpha<\frac4\lambda$. Put $F_k(t):=e^ta_k(t)^2$ and $\theta:=\frac{\alpha}{\alpha+1}$. We claim that
\begin{equation}
\Phi_k\le CF_k^\theta,\qquad\Psi_k\le CF_k^\theta\qquad\text{on }\mathbb{R},
\end{equation}
with $C=C(p,\alpha)$. For $t<k$ we have $F_k=e^{(\alpha+1)t}$, $\Phi_k=(1+\frac1\alpha)e^{\alpha t}$, and $\Psi_k=\big((\frac\alpha2)^{2-p}+\alpha^{1-p}\big)e^{\alpha t}$. For $t\ge k$,
\[
F_k^\theta=e^{\alpha k}e^{\frac{2\sigma}{\alpha+1}}(1+\sigma)^{\frac{2\alpha}{\alpha+1}},\qquad
\Phi_k\le\big(1+\tfrac4\alpha\big)e^{\alpha k}(1+\sigma)^2,\qquad
\Psi_k\le\big((\tfrac\alpha2)^{2-p}+2^p\alpha^{1-p}\big)e^{\alpha k}(1+\sigma)^p,
\]
so $\Phi_k/F_k^\theta$ and $\Psi_k/F_k^\theta$ are bounded by $C(1+\sigma)^{p}e^{-\frac{2\sigma}{\alpha+1}}\le C(p,\alpha)$ for $\sigma\ge0$.

We apply Lemma~\ref{lem:key} with $\psi=\eta^m$ and argue as in the proof of Proposition~\ref{prop:est_sing}. Young's inequality with exponents $\frac{\alpha+1}\alpha$ and $\alpha+1$ produces the powers $|\nabla\eta|^{2(\alpha+1)}$ and $|\nabla\eta|^{p(\alpha+1)}$, and the powers of $\eta$ are at most $\eta^{pm}$ provided $pm\ge2(\alpha+1)$ and $m\ge\alpha+1$. Both hold since $m\ge\alpha+1$ and $p\ge2$. After absorption, $F_k(t)=e^{(\alpha+1)t}$ and $b_k'(t)=\alpha e^{\alpha t}$ as soon as $k>t$, and Fatou's lemma gives \eqref{eq:est_exp}.
\end{proof}

Fix $\chi\in C^\infty(\mathbb{R})$ with $0\le\chi\le1$, $\chi=1$ on $(-\infty,1]$, $\chi=0$ on $[2,\infty)$, and $|\chi'|\le2$, and put $\eta_R(x)=\chi(|x|/R)$. Then $|\nabla\eta_R|\le2/R$, and $\nabla\eta_R$ is supported in $B_{2R}\setminus B_R$. For $2\le N<p$ put $q_*(N,p):=\frac{(p-2)^2}4$, and for $N\ge p$ put $q_*(N,p):=q_c(N,p)$.

\begin{corollary}\label{cor:stable}
Assume that $(H)$ and $(B)$ hold with $p\ge2$.
\begin{enumerate}[label=(\alph*)]
\item Let $q>0$ and assume that
\begin{equation}\label{eq:cond_sing}
\alpha_0<1-p\qquad\text{and}\qquad N<\frac{p(q-\alpha_0)}{q+p-1},
\end{equation}
and, if $\kappa>0$, also $N<\frac{2(q-\alpha_0)}{q+1}$. Then \eqref{eq:sing} has no stable weak solution.
\item If $2\le N<\frac{p(p+3)}{p-1}$ and $q>q_*(N,p)$, then $\Delta_p^Hu=u^{-q}$ has no stable weak solution. Indeed, for $\Delta_p^H$ and $2\le N<\frac{p(p+3)}{p-1}$, condition \eqref{eq:cond_sing} is equivalent to $q>q_*(N,p)$.
\item If $N<p(1+\frac4\lambda)$ and, when $\kappa>0$, also $N<2(1+\frac4\lambda)$, then \eqref{eq:exp} has no stable weak solution. For $\Delta_p^H$ this condition reads $N<\frac{p(p+3)}{p-1}$.
\end{enumerate}
\end{corollary}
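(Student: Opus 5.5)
The approach is the standard one: test Propositions~\ref{prop:est_sing} and \ref{prop:est_exp} with $K=\emptyset$ and $\eta=\eta_R$, let $R\to\infty$, and show that the right-hand side tends to zero.

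For (a), fix $\alpha\in(\alpha_0,1-p]$ close to $\alpha_0$ so that $N<\frac{p(q-\alpha)}{q+p-1}$, and also $N<\frac{2(q-\alpha)}{q+1}$ when $\kappa>0$. This is possible because both inequalities are strict at $\alpha_0$ and the bounds are continuous in $\alpha$. Take $m$ as in \eqref{eq:m_sing}. Since $|\nabla\eta_R|\le2/R$ on $B_{2R}\setminus B_R$, the right-hand side of \eqref{eq:est_sing} is at most $CR^{N-\frac{p(q-\alpha)}{q+p-1}}+CR^{N-\frac{2(q-\alpha)}{q+1}}$, where the second term appears only if $\kappa>0$, because $\Omega_1=\emptyset$ when $\kappa=0$. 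Both exponents are negative. Monotone convergence then gives $\int u^{\alpha-q}=0$, which contradicts $u>0$ a.e.

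Part (c) is identical. Choose $\alpha<\frac4\lambda$ close to $\frac4\lambda$ so that $N<p(\alpha+1)$, and $N<2(\alpha+1)$ if $\kappa>0$. This gives $\int e^{(\alpha+1)u}=0$, which is impossible. For $\Delta_p^H$ we have $B=t^p/p$, $\kappa=0$ and $\gamma=\Gamma=1$, so $\lambda=p-1$, and the condition $N<p(1+\frac4{p-1})$ is exactly $N<\frac{p(p+3)}{p-1}$.

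The main work is the equivalence in (b). Here $\kappa=0$ and $\lambda=p-1$, so only $\alpha_0<1-p$ and $N<\frac{p(q-\alpha_0)}{q+p-1}=\tau(q-\alpha_0)$ matter.

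First, $\alpha_0<1-p$ means $2q+2\sqrt{q(q+p-1)}>(p-1)(p-2)$. Since the left side is increasing in $q$ and equals $(p-1)(p-2)$ at $q=\frac{(p-2)^2}4$, this is equivalent to $q>\frac{(p-2)^2}4$. For $p=2$ it holds for every $q>0$.

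Second, write $X:=-\alpha_0-1=\frac{2q+2\sqrt{q(q+\lambda)}}{\lambda}$, the larger root of $\lambda X^2-4qX-4q=0$. Then $N<\tau(q+1+X)$ is equivalent to $X>\frac{N}{\tau}-q-1$. If the right-hand side is negative this holds trivially. Otherwise it holds exactly when the quadratic is negative at $Y:=\frac N\tau-q-1$, that is, $(p-1)Y^2<4q(Y+1)$. Multiplying by $\tau^2$ gives $\tau Y=N-(q+1)\tau$ and $\tau^2(Y+1)=\tau(N-q\tau)$. Using $q\tau=p-(p-1)\tau$, the condition becomes
\[
(p-1)\big[N-(q+1)\tau\big]^2<4\,q\tau\,(N-q\tau),
\]
which should reduce to the negation of \eqref{eq:qc}, namely $[N-(q-1)\tau]^2<\frac{4p}{p-1}(N-q\tau)$.

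I expect the main obstacle to be this algebra, together with matching it to $q_c$. The plan is to set $s=q\tau$, so that $\tau=\frac{p-s}{p-1}$ and $s\in(0,p)$ increases with $q$. Expanding, both sides should become the same quadratic inequality in $s$. Uniqueness of $q_c$ and the sign at $q\to\infty$ (where $s\to p$ and the right side stays positive for $N<\frac{p(p+3)}{p-1}$, together with the $\alpha_0$ constraint) then give equivalence with $q>q_c(N,p)$ for $N\ge p$.

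For $N<p$ I would check that the second condition is implied by the first, so that the threshold is $q_*=\frac{(p-2)^2}4$. For $N=p$ the threshold $\frac{(p-2)^2}4$ coincides with $q_c(p,p)$.

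If the direct expansion is messy, the fallback is to verify equality at $q=q_c$ (where both boundaries meet) and use monotonicity in $q$ of $\tau(q-\alpha_0)-N$ to fix the sign. Then (b) follows from (a).
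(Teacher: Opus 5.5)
Your proposal is correct and follows the paper's proof. Parts (a) and (c) test Propositions~\ref{prop:est_sing} and \ref{prop:est_exp} with $\eta_R$ and take $\alpha$ near the endpoint. Part (b) uses the same split: $\alpha_0<1-p\iff q>\frac{(p-2)^2}4$; for $N\le p$ the first condition in \eqref{eq:cond_sing} implies the second; for $N>p$ the second reduces to $q>q_c(N,p)$.

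Your reduction of $N<\tau(q-\alpha_0)$ to $[N-(q-1)\tau]^2<\frac{4p}{p-1}(N-q\tau)$ is right and spells out the \emph{direct computation} the paper leaves implicit. With $A=N-q\tau$ and $\tau(q+p-1)=p$, both inequalities read $(p-1)(A^2+\tau^2)<(4p-2(p-1)\tau)A$.

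To finish, state two small points. For $N\ge p$ the second inequality in \eqref{eq:cond_sing} already forces $\alpha_0<1-p$. Also, uniqueness of $q_c$ plus the sign as $q\to\infty$ does not rule out a tangential zero, so you need the sign below $q_c$ as well: either $D\to(N-\frac p{p-1})^2>0$ as $q\to0^+$, or the paper's observation that $D$ is a quadratic in $\tau$ with exactly one positive root.
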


\begin{proof}
(a) Let $u$ be a stable weak solution. Every $\alpha\in(\alpha_0,1-p]$ close to $\alpha_0$ satisfies $\alpha<-1$, and Proposition~\ref{prop:est_sing} with $\eta=\eta_R$ and $K=\emptyset$ gives
\[
\int_{B_R}u^{\alpha-q}\le C\big(R^{N-\frac{2(q-\alpha)}{q+1}}+R^{N-\frac{p(q-\alpha)}{q+p-1}}\big),
\]
the first term being present only if $\kappa>0$. Both exponents are negative for $\alpha$ close to $\alpha_0$, so $\int_{\mathbb{R}^N}u^{\alpha-q}=0$, which is impossible.

(b) Here $\lambda=p-1$. The function $q\mapsto2q+2\sqrt{q(q+p-1)}$ is increasing and equals $(p-1)(p-2)$ at $q=\frac{(p-2)^2}4$, since then $q+p-1=\frac{p^2}4$. Hence $\alpha_0<1-p$ if and only if $q>\frac{(p-2)^2}4$. If $N\le p$, this implies $p(q-\alpha_0)>p(q+p-1)\ge N(q+p-1)$, and $\frac{(p-2)^2}4=q_*(N,p)$. If $p<N<\frac{p(p+3)}{p-1}$, the second inequality in \eqref{eq:cond_sing} implies the first, and it is equivalent to $N+\tau(\alpha_0-q)<0$. A direct computation shows that this holds if and only if $q>q_c(N,p)$. This is the relation between $q_c$ and $\alpha_0$ (denoted $\gamma_0(q)$ there) used in \cite[Section~3]{GM16}. The nonexistence statement follows from (a).

(c) Proposition~\ref{prop:est_exp} with $\eta=\eta_R$ gives $\int_{B_R}e^{(\alpha+1)u}\le C\big(R^{N-2(\alpha+1)}+R^{N-p(\alpha+1)}\big)$, the first term being present only if $\kappa>0$, and we choose $\alpha<\frac4\lambda$ close to $\frac4\lambda$.
\end{proof}

\begin{remark}\label{rem:HL1}
Part (a) contains \cite[Theorem~1]{HL26a}, except when $\kappa=0$, $N<p$, and $\alpha_0\ge1-p$. Hypothesis (i) there is $\kappa>0$ and $N<\frac{2(q-\alpha_0)}{q+p-1}$. Together with $N\ge2$ it gives $\alpha_0<1-p$, and since $p\ge2$ the number $\frac{2}{q+p-1}$ is at most $\frac{2}{q+1}$ and at most $\frac{p}{q+p-1}$. Hypothesis (ii) there is $\kappa=0$ and the second inequality in \eqref{eq:cond_sing}. It implies $\alpha_0<1-p$ when $N\ge p$ but not when $N<p$, and without $\alpha_0<1-p$ the range of $\alpha$ in the integral estimate is empty. In \cite[Corollary~2]{HL26a} the convention $q_c(N,p)=0$ for $N\le p$ was used. With that convention the statement is false for $N=p>2$ and $0<q\le\frac{(p-2)^2}4$, by Theorem~\ref{thm:sharp}(a), and for $N<p$ its proof requires $q>\frac{(p-2)^2}4$. Part (b) gives the corrected statement. The exponent of $|\nabla\eta|$ on $\Omega_1$ in \eqref{eq:est_sing} differs from the one in \cite[Proposition~6]{HL26a}: the bound $u^{\alpha+1}\le Cu^{\alpha+p-1}$ used there fails for $u<1$ when $p>2$. Part (c) is \cite[Theorem~1 and Corollary~2]{HL26b}.
\end{remark}

\section{Exterior solutions}\label{sec_exterior}

In this section we prove Theorem~\ref{thm:exterior}. Both parts follow from a comparison lemma for subsolutions of $\Delta_p^Hv=h(v)$ with $h$ nonincreasing. For $R_0\ge0$ let $E=\{H_0>R_0\}$.

\begin{lemma}\label{lem:barrier}
Assume that $(H)$ holds and $p\ge2$. Let $\mathcal O\subset\mathbb{R}$ be an open interval and $h:\mathcal O\to(0,\infty)$ continuous and nonincreasing. Let $v\in W^{1,p}_{\rm loc}(E)\cap C(E)$ take values in $\mathcal O$ and satisfy $\Delta_p^Hv\le h(v)$ weakly in $E$, that is,
\[
\int_E\langle\mathcal A(\nabla v),\nabla\varphi\rangle\ge-\int_Eh(v)\varphi\qquad\text{for all }0\le\varphi\in C^1_c(E).
\]
For $r>R_0$ put $m(r):=\min_{\{H_0=r\}}v$, and for $R_1>R_0$ and $r,S>R_1$ put
\[
F(r):=\int_{R_1}^rs^{N-1}h(m(s))\,ds,\qquad \Lambda(S):=\int_{R_1}^S\big(r^{1-N}F(r)\big)^{\frac1{p-1}}dr,\qquad \mathcal L(S):=\int_{R_1}^Sr^{-\frac{N-1}{p-1}}dr .
\]
Then, for every $R_1>R_0$,
\[
\limsup_{S\to\infty}\frac{m(S)-\Lambda(S)}{\mathcal L(S)}<+\infty .
\]
\end{lemma}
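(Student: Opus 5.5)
The plan is to compare $v$ on the Wulff annulus $\Omega_S:=\{R_1<H_0<S\}$ with an explicit radial subsolution whose right-hand side is frozen at the sphere minima. Put $f(x):=h(m(H_0(x)))$. Since $v(x)\ge m(H_0(x))$ and $h$ is nonincreasing, $h(v(x))\le f(x)$, so $-\Delta_p^Hv\ge -h(v)\ge -f$ weakly in $E$. Thus $v$ is a supersolution of a problem with a fixed right-hand side, and Lemma~\ref{lem:comparison} applies with $v_2=v$, $f_2=-f$.

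First I record regularity. Because $v\in C(E)$ and the Wulff spheres vary continuously, $m$ is continuous on $(R_0,\infty)$. Hence $f$ is continuous and bounded on $\overline{\Omega_S}$, and $F\in C^1$. For a constant $c\ge0$ define
\[
V_c(r):=m(R_1)+\int_{R_1}^r\big(s^{1-N}(F(s)+c)\big)^{\frac1{p-1}}ds,\qquad W_c(x):=V_c(H_0(x)).
\]
Then $r^{N-1}\varphi_p(V_c')=F+c$ is $C^1$, and Lemma~\ref{lem:radial} gives $\Delta_p^HW_c=r^{1-N}F'(r)=h(m(r))=f$ pointwise and weakly. Equivalently, $-\Delta_p^HW_c=-f$, so $W_c$ is admissible as $v_1$ with $f_1=-f$. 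The map $c\mapsto V_c(S)$ is continuous, strictly increasing and tends to $\infty$.

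Fix $S$ and suppose $m(S)-m(R_1)\ge\Lambda(S)$; otherwise the quotient in question is controlled trivially. Then $V_0(S)=m(R_1)+\Lambda(S)\le m(S)$, so we can choose $c=c_S\ge0$ with $V_{c}(S)=m(S)$. On $\{H_0=R_1\}$ we have $W_c=m(R_1)\le v$, and on $\{H_0=S\}$ we have $W_c=m(S)\le v$. Lemma~\ref{lem:comparison} then gives $W_c\le v$ in $\Omega_S$, and taking minima over spheres yields $m(r)\ge V_c(r)$ for $R_1<r<S$.

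Next I bound $c_S$ in both directions. For the lower bound I use the subadditivity of $s\mapsto s^{1/(p-1)}$ (this is where $p\ge2$ enters): $(F+c)^{1/(p-1)}\le F^{1/(p-1)}+c^{1/(p-1)}$, so
\[
m(S)=V_c(S)\le m(R_1)+\Lambda(S)+c_S^{\frac1{p-1}}\mathcal L(S).
\]
For the upper bound, fix $r_2:=R_1+1$ once and for all. Since $F\ge0$, for every $S>r_2$ we get $m(r_2)\ge V_c(r_2)\ge m(R_1)+c_S^{1/(p-1)}\mathcal L(r_2)$. Hence $c_S^{1/(p-1)}\le (m(r_2)-m(R_1))/\mathcal L(r_2)=:A$, a constant independent of $S$. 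Combining the two bounds gives
\[
\frac{m(S)-\Lambda(S)}{\mathcal L(S)}\le A+\frac{m(R_1)}{\mathcal L(S)}
\]
in this case. In the other case the quotient is at most $m(R_1)/\mathcal L(S)$. Since $\mathcal L$ is increasing, the right-hand sides stay bounded as $S\to\infty$, which proves the $\limsup$ bound.

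The step I expect to need the most care is verifying that $W_c$ meets the hypotheses of Lemma~\ref{lem:comparison}. Specifically, $W_c\in W^{1,p}(\Omega_S)\cap C(\overline{\Omega_S})$ and $v\in W^{1,p}(\Omega_S)$ hold on a compact subset of $E$ because $R_1>R_0$. The weak inequality for $v$ must also extend from $C^1_c$ test functions to $(W_c-v-\varepsilon)^+$, which follows by density since this function is bounded and compactly supported in $\Omega_S$. The continuity of $m$, needed for $F\in C^1$ in Lemma~\ref{lem:radial}, also has to be checked, using the continuity of $v$ and of $H_0$.
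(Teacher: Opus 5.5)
Your proof is correct and follows the paper's argument: the same radial barriers with right-hand side frozen at $h(m(r))$ and matched to $m(S)$ on $\{H_0=S\}$, comparison via Lemma~\ref{lem:comparison}, subadditivity of $s\mapsto s^{1/(p-1)}$, and evaluation at the fixed radius $R_1+1$ to bound the shooting parameter. The only difference is organizational. The paper argues by contradiction and lets the parameter range over $\mathbb{R}$, using $t^+$. You instead restrict to $c\ge0$ through the case split $m(S)-m(R_1)\ge\Lambda(S)$ and argue directly, which yields an explicit upper bound on the $\limsup$.
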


\begin{proof}
The function $m$ is continuous on $(R_0,\infty)$, since $v$ is uniformly continuous on compact subsets of $E$ and $m(r)=\min_{\{H_0=1\}}v(r\,\cdot)$. Put $f(r):=h(m(r))$, a continuous positive function. Fix $R_1>R_0$ and suppose, to reach a contradiction, that there are $S_j\to\infty$ with
\begin{equation}\label{eq:ratio}
\frac{m(S_j)-\Lambda(S_j)}{\mathcal L(S_j)}\to+\infty .
\end{equation}

\medskip\noindent\textit{The barriers.} For $S>R_1$ and $t\in\mathbb{R}$ let
\[
V_t(r):=m(R_1)+\int_{R_1}^r\varphi_p^{-1}\big(s^{1-N}(t+F(s))\big)ds,\qquad R_1\le r\le S .
\]
Then $V_t\in C^1([R_1,S])$, and $r^{N-1}\varphi_p(V_t'(r))=t+F(r)$ is of class $C^1$ with derivative $r^{N-1}f(r)$. The value $V_t(S)$ is a continuous and strictly increasing function of $t$ that tends to $\pm\infty$ as $t\to\pm\infty$. Let $t_S$ be the value for which $V_{t_S}(S)=m(S)$, and put $W(x):=V_{t_S}(H_0(x))$ on $A_S:=\{R_1<H_0<S\}$. By Lemma~\ref{lem:radial}, $\Delta_p^HW=f(H_0(x))$ in $A_S$. Since $v\ge m(H_0(x))$ and $h$ is nonincreasing, $h(v(x))\le f(H_0(x))$, hence
\[
-\Delta_p^HW=-f(H_0)\le-h(v)\le-\Delta_p^Hv\qquad\text{weakly in }A_S .
\]
On $\{H_0=R_1\}$ we have $W=m(R_1)\le v$, and on $\{H_0=S\}$ we have $W=m(S)\le v$. Both right-hand sides are bounded on $\overline{A_S}$ because $v$ is continuous there. Lemma~\ref{lem:comparison} gives $W\le v$ in $A_S$.

\medskip\noindent\textit{The slopes.} Since $p\ge2$, $s\mapsto s^{\frac1{p-1}}$ is subadditive on $[0,\infty)$, and $\varphi_p^{-1}$ is increasing. With $t^+=\max\{t,0\}$ and $F\ge0$,
\[
V_t'(r)\le\varphi_p^{-1}\big(r^{1-N}(t^++F(r))\big)\le(t^+)^{\frac1{p-1}}r^{-\frac{N-1}{p-1}}+\big(r^{1-N}F(r)\big)^{\frac1{p-1}} .
\]
Integrating from $R_1$ to $S$ and using $V_{t_S}(S)=m(S)$, we get
\[
m(S)\le m(R_1)+(t_S^+)^{\frac1{p-1}}\mathcal L(S)+\Lambda(S).
\]
Since $\mathcal L(S)\ge\mathcal L(R_1+1)>0$ for $S\ge R_1+1$, \eqref{eq:ratio} gives $t_{S_j}\to+\infty$.

\medskip\noindent\textit{Conclusion.} Fix $x_0$ with $H_0(x_0)=r_0:=R_1+1$. For $S_j>r_0$ with $t_{S_j}>0$, since $F\ge0$,
\[
v(x_0)\ge V_{t_{S_j}}(r_0)\ge m(R_1)+t_{S_j}^{\frac1{p-1}}\int_{R_1}^{r_0}r^{-\frac{N-1}{p-1}}dr\longrightarrow+\infty,
\]
which is absurd.
\end{proof}

\begin{proof}[Proof of Theorem~\ref{thm:exterior}]
(a) Apply Lemma~\ref{lem:barrier} with $v=u$, $\mathcal O=(0,\infty)$, and $h(t)=t^{-q}$; the equation $\Delta_p^Hu=u^{-q}$ gives the required inequality. Suppose that $\liminf_{r\to\infty}r^{-\tau}m(r)>c_s$, and choose $c'>c_s$ and $R_1>R_0$ with $m(r)\ge c'r^\tau$ for $r\ge R_1$. Since $N-q\tau>0$,
\[
F(r)\le c'^{-q}\int_{R_1}^rs^{N-1-q\tau}ds\le\frac{c'^{-q}}{N-q\tau}\,r^{N-q\tau}.
\]
Because $\frac{1-q\tau}{p-1}=\tau-1$, this gives $\Lambda(S)\le c''S^\tau$ with $c'':=\tau^{-1}\big(c'^{q}(N-q\tau)\big)^{-\frac1{p-1}}$. The inequality $c''<c'$ is equivalent to $c'^{q+p-1}>\big(\tau^{p-1}(N-q\tau)\big)^{-1}=c_s^{q+p-1}$, which holds. Hence $m(S)-\Lambda(S)\ge(c'-c'')S^\tau$. On the other hand, $\mathcal L(S)$ is bounded if $N>p$, equals $\log(S/R_1)$ if $N=p$, and is at most $\frac{p-1}{p-N}S^{\frac{p-N}{p-1}}$ if $N<p$. Since $\tau-\frac{p-N}{p-1}=\frac{N-q\tau}{p-1}>0$, in all cases $\mathcal L(S)=o(S^\tau)$, and the ratio in Lemma~\ref{lem:barrier} tends to $+\infty$, a contradiction.

(b) Let $v=-u$. Since $\mathcal A(-\xi)=-\mathcal A(\xi)$, we have $\Delta_p^Hv=-\Delta_p^Hu=e^u=e^{-v}$ weakly in $E$. Apply Lemma~\ref{lem:barrier} with $\mathcal O=\mathbb{R}$ and $h(t)=e^{-t}$. Here $m(r)=-\max_{\{H_0=r\}}u$ and $h(m(r))=\max_{\{H_0=r\}}e^u$. Suppose that the $\limsup$ in (b) is smaller than $p^{p-1}(N-p)$, and choose $\varepsilon<p^{p-1}(N-p)$ and $R_1>R_0$ with $h(m(r))\le\varepsilon r^{-p}$ for $r\ge R_1$. Then $F(r)\le\frac{\varepsilon}{N-p}r^{N-p}$, so
\[
\Lambda(S)\le\rho\log\frac S{R_1},\qquad\rho:=\Big(\frac{\varepsilon}{N-p}\Big)^{\frac1{p-1}}<p .
\]
Also $m(S)\ge p\log S-\log\varepsilon$, and $\mathcal L(S)\le\mathcal L(\infty)<\infty$ because $N>p$. The ratio in Lemma~\ref{lem:barrier} is at least $\big((p-\rho)\log S-C\big)/\mathcal L(\infty)$, which tends to $+\infty$, a contradiction.

By Lemma~\ref{lem:radial}, $U_q$ and $U_e$ solve the two equations in $\mathbb{R}^N\setminus\{0\}$. For the first, $(r^{N-1}\varphi_p((c_sr^\tau)'))'=c_s^{p-1}\tau^{p-1}(N-q\tau)r^{N-1-q\tau}$ and $c_s^{p-1}\tau^{p-1}(N-q\tau)=c_s^{-q}$. They give equality in (a) and (b).
\end{proof}

\section{Solutions stable outside a compact set}\label{sec_outside}

In view of Theorem~\ref{thm:exterior}, Theorem~\ref{thm:outside} follows once we show that a solution stable outside a compact set lies above $U_q$, respectively that $e^u$ lies below $e^{U_e}$, at infinity by an arbitrarily large factor. This is where stability is used. Throughout this section $B(t)=t^p/p$, so $\kappa=0$, $\gamma=\Gamma=1$, $\lambda=p-1$, $\Omega_1=\emptyset$, and $\Omega_2=\mathbb{R}^N$. We fix $R_0>0$ so that $K\subset B_{R_0}$, and we use a cutoff function $\zeta_R\in C^1_c(\mathbb{R}^N\setminus\overline{B_{R_0}})$ equal to $0$ in $B_{R_0+1}$, to $1$ in $B_R\setminus B_{R_0+2}$, and to $0$ outside $B_{2R}$, with $|\nabla\zeta_R|\le C$ in $B_{R_0+2}\setminus B_{R_0+1}$ and $|\nabla\zeta_R|\le C/R$ in $B_{2R}\setminus B_R$, where $C$ does not depend on $R>R_0+3$.

\begin{lemma}\label{lem:decay_sing}
Assume that $(H)$ holds, $p\ge2$, $p\le N<\frac{p(p+3)}{p-1}$, and $q>q_c(N,p)$. Let $u$ be a positive weak solution of $\Delta_p^Hu=u^{-q}$ in $\mathbb{R}^N$ that is stable outside a compact set $K$. Then $u$ is continuous in $\mathbb{R}^N\setminus K$ and
\[
\lim_{|x|\to\infty}|x|^{-\tau}u(x)=+\infty .
\]
\end{lemma}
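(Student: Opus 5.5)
Outside $K$ we test Proposition~\ref{prop:est_sing} with cutoffs localized to annuli, which gives smallness of $u^{\alpha-q}$ (a negative power of $u$) on balls near infinity. A Harnack inequality for $w=u^{-1}$-type quantities then turns this into a pointwise bound. Continuity comes first: writing $\Delta_p^Hu=u^{-q}$ as $-\Delta_p^H u=d(x)\varphi_p(u)$ with $d=-u^{-q-p+1}$, the integral estimate with $\alpha$ close to $\alpha_0$ gives $u^{\alpha-q}\in L^1_{\rm loc}(\mathbb{R}^N\setminus K)$. Since $q>q_c(N,p)$ (equivalently, by the proof of Corollary~\ref{cor:stable}(b), $N+\tau(\alpha_0-q)<0$ when $N>p$, and $\alpha_0<1-p$ with $q-\alpha_0>q+p-1$ when $N=p$), we have $q-\alpha_0>N(q+p-1)/p$. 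Hence $d\in L^s_{\rm loc}$ with $s=(q-\alpha)/(q+p-1)>N/p$ for $\alpha$ near $\alpha_0$. Lemma~\ref{lem:serrin}(i) then gives local boundedness and continuity of $u$ in $\mathbb{R}^N\setminus K$.

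For the growth, fix $\alpha\in(\alpha_0,1-p]$ with $N+\tau(\alpha-q)<0$; this is possible for $N>p$ by the above, and for $N=p$ because $q-\alpha>q+p-1$. For $y$ with $|y|=\rho$ large, apply \eqref{eq:est_sing} with $\Omega_1=\emptyset$ and $\eta$ a cutoff equal to $1$ on $B_{\rho/2}(y)$, supported in $B_{3\rho/4}(y)$, with $|\nabla\eta|\le C/\rho$. This gives
\[
\int_{B_{\rho/2}(y)}u^{\alpha-q}\le C\rho^{N-\frac{p(q-\alpha)}{q+p-1}}=C\rho^{N+\tau(\alpha-q)} .
\]
Now set $w=u^{-1}$, or more conveniently work with $u$ itself via the weak Harnack structure. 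Since $\Delta_p^Hu=d\,u^{p-1}$ with $d=u^{-q-p+1}\ge0$, Lemma~\ref{lem:serrin}(ii) applies on $B_{\rho/4}(y)$. Its constant depends on $\rho^{p-N/s}\|d\|_{L^s(B_{\rho/2}(y))}$ with $s=\frac{q-\alpha}{q+p-1}$, and this quantity equals
\[
\rho^{p-N/s}\Big(\int u^{\alpha-q}\Big)^{1/s}\le C\rho^{p-N/s}\rho^{(N+\tau(\alpha-q))/s}=C\rho^{p+\tau(\alpha-q)/s}=C\rho^{p-p}=C,
\]
because $\tau/s\cdot(q-\alpha)=p$. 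So Harnack holds with a uniform constant: $\sup_{B_{\rho/4}(y)}u\le K_*\inf_{B_{\rho/4}(y)}u$. Then
\[
(\inf_{B_{\rho/4}}u)^{\alpha-q}\rho^N\le C\,(\sup u)^{\alpha-q}\rho^N\le C\int_{B_{\rho/4}(y)}u^{\alpha-q}\le C\rho^{N+\tau(\alpha-q)},
\]
which yields $u(y)\ge c\rho^\tau$: a bound of the right order, but not yet the required divergence.

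The divergence $|x|^{-\tau}u\to\infty$ is the main obstacle. I would obtain it by sharpening the integral bound to $o$: take $\eta=\zeta_R$, so that $\mathbb{R}^N\setminus B_{R_0}$ is covered, with the inner gradient part fixed. This gives $\int_{B_R\setminus B_{R_0+2}}u^{\alpha-q}\le C+CR^{N+\tau(\alpha-q)}$, which is bounded since the exponent is negative. Hence $\int_{\{|x|>R_0+2\}}u^{\alpha-q}<\infty$, and the tail $\int_{B_{\rho/2}(y)}u^{\alpha-q}\to0$ as $\rho\to\infty$. To exploit this at the right scale, use the slightly larger exponent $\alpha'$ instead: pick $\alpha<\alpha'$ both admissible. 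Finiteness of the total integral for $\alpha$ gives
\[
\int_{B_{\rho/2}(y)}u^{\alpha-q}=o(1),
\]
and the Harnack argument then gives $(\inf u)^{\alpha-q}\rho^N=o(1)$. So $u(y)\ge \omega(\rho)\rho^{N/(q-\alpha)}$ with $\omega\to\infty$, and $N/(q-\alpha)>\tau$ exactly because $N+\tau(\alpha-q)>\ldots$. I need to check the sign here: $N/(q-\alpha)\ge\tau$ is equivalent to $N\ge\tau(q-\alpha)$, which is false. So instead I would use the $o(1)$ with the Harnack scale directly. The Harnack constant stays bounded, since its parameter tends to zero, and combining
\[
u(y)^{\alpha-q}\rho^N\le C\int_{B_{\rho/2}(y)}u^{\alpha-q}\le\varepsilon(\rho)\,\rho^{N+\tau(\alpha-q)}
\]
requires a scale-invariant smallness. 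I would obtain that from the annular estimate by choosing $\eta=\zeta_R$ with support restricted to $\{|x|>\rho/4\}$. This gives $\int_{B_{\rho/2}(y)}u^{\alpha-q}\le C\int_{\rho/8<|x|<\rho/4}|\nabla\eta|^{\frac{p(q-\alpha)}{q+p-1}}$ over an annulus where $u$ is already large. Iterating this bound with the already-obtained lower bound $u\ge c|x|^\tau$ is how I would try to upgrade $c$ to $\infty$; failing that, I would use the finite total mass together with the two exponents $\alpha<\alpha'$ and interpolate.
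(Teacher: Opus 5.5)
Your continuity argument, the uniform Harnack constant obtained from an exponent with $s=\frac{q-\alpha}{q+p-1}>\frac Np$, and the resulting bound $u(y)\ge c|y|^\tau$ are correct; they match Steps~1 and~3 of the paper. But the lemma asks for $|x|^{-\tau}u\to\infty$, and a fixed constant is not enough: Theorem~\ref{thm:exterior}(a) only excludes $\liminf_{r\to\infty} r^{-\tau}\min_{\{H_0=r\}}u>c_s$. At this point your proposal has a genuine gap.

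Every exponent you use satisfies $N+\tau(\alpha-q)<0$ strictly, and for such $\alpha$ the finiteness of $\int_{\{|x|>R_0+2\}}u^{\alpha-q}$ is not scale invariant. If $u\approx c|x|^\tau$, then $\int_{B_\rho(y)}u^{\alpha-q}\approx c^{\alpha-q}\rho^{N+\tau(\alpha-q)}\to0$ for every $c$, so tail smallness says nothing about $c$. This is why you ended up with the wrong power $\rho^{N/(q-\alpha)}$.

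Your fallbacks do not fix this. The right-hand side of \eqref{eq:est_sing} is $C\int|\nabla\eta|^{\tau(q-\alpha)}$ and does not involve $u$, so largeness of $u$ on the annulus cannot be fed back. Even the $u$-dependent right-hand side $\int\Psi(u)|\nabla\psi|^p$ of Lemma~\ref{lem:key}, with $\Psi(u)\approx u^{\alpha+p-1}$, only gives $c_{\rm new}=C'c^{\theta}$ with $\theta=\frac{1-p-\alpha}{q-\alpha}<1$, which stays bounded under iteration. Interpolating between two subcritical exponents also stays subcritical.

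The missing idea is to use the critical exponent itself. Put $\beta:=\frac{N(q+p-1)}p=\frac N\tau$ and $\alpha_*:=q-\beta$, so that $N+\tau(\alpha_*-q)=0$. This exponent is admissible in Proposition~\ref{prop:est_sing}: $\alpha_*\le1-p$ because $N\ge p$, and $\alpha_*>\alpha_0$ is precisely the inequality $q-\alpha_0>\frac{N(q+p-1)}p$ that you already derived from $q>q_c(N,p)$.

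With $\eta=\zeta_R$ the outer term becomes $CR^{N-\tau(q-\alpha_*)}=C$. Hence $\int_{B_R\setminus B_{R_0+2}}u^{-\beta}\le 2C$ uniformly in $R$, and $u^{-\beta}\in L^1(\{|x|>R_0+2\})$. This quantity is invariant at the $|x|^\tau$ scale, since $R^N(R^\tau)^{-\beta}=1$. So if $\int_{\{|x|>R_\vartheta\}}u^{-\beta}\le\vartheta$ and $|y|=8R\ge8R_\vartheta$, then $|B_1|R^N(\sup_{B_R(y)}u)^{-\beta}\le\vartheta$, that is, $\sup_{B_R(y)}u\ge(|B_1|/\vartheta)^{1/\beta}R^\tau$.

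Your uniform Harnack inequality transfers this bound to $u(y)$. It must still be run with a strictly smaller exponent, such as the paper's $\alpha_\varepsilon$, because $\alpha_*$ corresponds to the borderline $s=N/p$ excluded in Lemma~\ref{lem:serrin}(ii). Since $\vartheta$ is arbitrary, $|y|^{-\tau}u(y)\to\infty$.
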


\begin{proof}
Set $\beta:=\frac{N(q+p-1)}p$, so that $\frac N\beta=\tau$, and $\alpha_*:=q-\beta$. Then $\alpha_*\le1-p$ because $N\ge p$. By Corollary~\ref{cor:stable}(b), the assumption $q>q_c(N,p)$ means that $\alpha_0<1-p$ and $N(q+p-1)<p(q-\alpha_0)$, so $\alpha_*>\alpha_0$. For $\varepsilon\in(0,p)$ put $s_\varepsilon:=\frac N{p-\varepsilon}$ and $\alpha_\varepsilon:=q-s_\varepsilon(q+p-1)$. Then $\alpha_\varepsilon<\alpha_*$ and $\alpha_\varepsilon\to\alpha_*$ as $\varepsilon\to0$, and we fix $\varepsilon$ so small that $\alpha_\varepsilon>\alpha_0$.

\medskip\noindent\textit{Step 1: continuity.} Write the equation as $\Delta_p^Hu=d(x)u^{p-1}$ with $d:=u^{-(q+p-1)}$, so that $d^{s_\varepsilon}=u^{\alpha_\varepsilon-q}$. For $\bar x\notin K$ and a ball $B_{2\rho}(\bar x)\subset\mathbb{R}^N\setminus K$, Proposition~\ref{prop:est_sing} with $\alpha=\alpha_\varepsilon$ and a cutoff function supported in $B_{2\rho}(\bar x)$ shows that $d\in L^{s_\varepsilon}(B_\rho(\bar x))$. Since $s_\varepsilon>\frac Np$, Lemma~\ref{lem:serrin}(i) shows that $u$ is locally bounded and continuous in $\mathbb{R}^N\setminus K$, and Lemma~\ref{lem:serrin}(ii) shows that $u$ is locally bounded away from $0$ there.

\medskip\noindent\textit{Step 2: an integral bound at infinity.} Proposition~\ref{prop:est_sing} with $\alpha=\alpha_*$ and $\eta=\zeta_R$ gives, for $R>R_0+3$,
\[
\int_{B_R\setminus B_{R_0+2}}u^{-\beta}\le C\int_{B_{R_0+2}\setminus B_{R_0+1}}|\nabla\zeta_R|^{\frac{p(q-\alpha_*)}{q+p-1}}+C\int_{B_{2R}\setminus B_R}|\nabla\zeta_R|^{\frac{p(q-\alpha_*)}{q+p-1}}\le C+CR^{N-\frac{p\beta}{q+p-1}}=2C,
\]
since $\frac{p\beta}{q+p-1}=N$. Hence $u^{-\beta}\in L^1(\mathbb{R}^N\setminus B_{R_0+2})$. If $p=N=2$, then $\alpha_*=-1$, which is allowed in Proposition~\ref{prop:est_sing}.

\medskip\noindent\textit{Step 3: a Harnack inequality at infinity.} Let $R\ge R_0+2$ and $|y|=8R$. Then $B_{4R}(y)\subset\{4R<|x|<12R\}$, so $u$ is stable in $B_{4R}(y)$. Proposition~\ref{prop:est_sing} with $\eta(x)=\eta_{2R}(x-y)$, which equals $1$ on $B_{2R}(y)$ and satisfies $|\nabla\eta|\le1/R$, gives
\[
\int_{B_{2R}(y)}u^{\alpha_\varepsilon-q}\le CR^{N-\frac{p(q-\alpha_\varepsilon)}{q+p-1}}=CR^{N-ps_\varepsilon},
\]
with $C$ independent of $R$ and $y$. Therefore
\[
R^{p-\frac N{s_\varepsilon}}\|d\|_{L^{s_\varepsilon}(B_{2R}(y))}=R^{\varepsilon}\Big(\int_{B_{2R}(y)}u^{\alpha_\varepsilon-q}\Big)^{\frac{p-\varepsilon}N}\le C,
\]
and Lemma~\ref{lem:serrin}(ii) gives $\sup_{B_R(y)}u\le K_*\inf_{B_R(y)}u$ with $K_*$ independent of $R$ and $y$.

\medskip\noindent\textit{Step 4: conclusion.} Let $\vartheta>0$. By Step~2 there is $R_\vartheta\ge R_0+2$ with $\int_{\{|x|>R_\vartheta\}}u^{-\beta}\le\vartheta$. For $R\ge R_\vartheta$ and $|y|=8R$ we have $B_R(y)\subset\{|x|>7R\}$, so
\[
|B_1|R^N\Big(\sup_{B_R(y)}u\Big)^{-\beta}\le\int_{B_R(y)}u^{-\beta}\le\vartheta,\qquad\text{hence}\qquad\sup_{B_R(y)}u\ge\Big(\frac{|B_1|}\vartheta\Big)^{\frac1\beta}R^\tau .
\]
By Step~3, $u(y)\ge K_*^{-1}(|B_1|/\vartheta)^{1/\beta}(|y|/8)^\tau$ for $|y|\ge8R_\vartheta$. Since $\vartheta$ is arbitrary, the lemma follows.
\end{proof}

\begin{lemma}\label{lem:decay_exp}
Assume that $(H)$ holds, $p\ge2$, and $p<N<\frac{p(p+3)}{p-1}$. Let $u$ be a weak solution of $-\Delta_p^Hu=e^u$ in $\mathbb{R}^N$ that is stable outside a compact set $K$. Then $u$ is continuous in $\mathbb{R}^N\setminus K$ and
\[
\lim_{|x|\to\infty}|x|^pe^{u(x)}=0 .
\]
\end{lemma}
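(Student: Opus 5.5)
We follow the proof of Lemma~\ref{lem:decay_sing}, now using Proposition~\ref{prop:est_exp} with $B(t)=t^p/p$, so $\lambda=p-1$, $\Omega_2=\mathbb{R}^N$, and the admissible range is $\alpha\in(0,\frac4{p-1})$. Since $p<N<\frac{p(p+3)}{p-1}=p(1+\frac4{p-1})$, the critical exponent $\alpha_*:=\frac Np-1$ satisfies $0<\alpha_*<\frac4{p-1}$. We fix $\varepsilon\in(0,p)$ small and put $s_\varepsilon:=\frac N{p-\varepsilon}>\frac Np$ and $\alpha_\varepsilon:=s_\varepsilon-1$, so that $\alpha_\varepsilon\in(\alpha_*,\frac4{p-1})$ for $\varepsilon$ small.

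\emph{Continuity.} We write the equation as $-\Delta_p^Hu=e^u$, that is, (i) of Lemma~\ref{lem:serrin} with $d=0$ and $f=e^u$. Proposition~\ref{prop:est_exp} with $\alpha=\alpha_\varepsilon$ and a cutoff supported in a ball $B_{2\rho}(\bar x)\subset\mathbb{R}^N\setminus K$ gives $e^u\in L^{s_\varepsilon}(B_\rho(\bar x))$. Since $s_\varepsilon>\frac Np$, Lemma~\ref{lem:serrin}(i) shows that $u$ is locally bounded and H\"older continuous outside $K$.

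\emph{Integral bound at infinity.} Proposition~\ref{prop:est_exp} with $\alpha=\alpha_*$ and $\eta=\zeta_R$ gives
\[
\int_{B_R\setminus B_{R_0+2}}e^{\frac Np u}\le C+CR^{N-p(\alpha_*+1)}=2C,
\]
so $e^{Nu/p}\in L^1(\{|x|>R_0+2\})$.

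\emph{Scale-invariant local sup bound.} For $|y|=8R$ with $R\ge R_0+2$, Proposition~\ref{prop:est_exp} with $\alpha=\alpha_\varepsilon$ and $\eta=\eta_{2R}(\cdot-y)$ gives $\int_{B_{2R}(y)}e^{s_\varepsilon u}\le CR^{N-ps_\varepsilon}$. Hence
\[
R^{p-N/s_\varepsilon}\|e^u\|_{L^{s_\varepsilon}(B_{2R}(y))}\le C
\]
uniformly in $R$ and $y$. This step replaces the Harnack inequality, since there is no positive quantity with a multiplicative Harnack inequality for $e^u$.

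We use the subsolution estimate (iii) of Lemma~\ref{lem:serrin} applied to $w:=e^{u/(p-1)}$, or more simply to $w=e^{\sigma u}$ for a suitable $\sigma>0$. We need to check that such a $w$ is a subsolution $-\Delta_p^Hw\le a(x)w^{p-1}$ with $a=Ce^u$. Formally, for $w=e^{\sigma u}$ one has
\[
\Delta_p^Hw=\sigma^{p-1}e^{(p-1)\sigma u}\big(\Delta_p^Hu+(p-1)\sigma H(\nabla u)^p\big)\ge-\sigma^{p-1}e^u w^{p-1},
\]
so $-\Delta_p^Hw\le\sigma^{p-1}e^uw^{p-1}$. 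We would justify this weakly by testing with $e^{(p-1)\sigma u}\varphi$, which is admissible since $u$ is locally bounded; $w\in W^{1,p}_{\rm loc}$ for the same reason. Then, taking $\sigma p=N/p$, that is $\sigma=\frac N{p^2}$, Lemma~\ref{lem:serrin}(iii) gives
\[
\sup_{B_R(y)}e^{\sigma u}\le K_*R^{-N/p}\Big(\int_{B_{2R}(y)}e^{\frac Np u}\Big)^{1/p},
\]
that is,
\[
\sup_{B_R(y)}e^{u}\le K_*^{1/\sigma}R^{-p}\Big(\int_{B_{2R}(y)}e^{\frac Npu}\Big)^{p/N}.
\]
The constant $K_*$ is uniform by the previous bound, because $a=\sigma^{p-1}e^u$.

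\emph{Conclusion.} Since $B_{2R}(y)\subset\{|x|>6R\}$, the integral bound at infinity makes $\int_{B_{2R}(y)}e^{Nu/p}\to0$ as $R\to\infty$. Therefore $|y|^pe^{u(y)}\le C\big(\int_{\{|x|>6R\}}e^{Nu/p}\big)^{p/N}\to0$ as $|y|\to\infty$.

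The main obstacle is the rigorous weak derivation of the subsolution inequality for $e^{\sigma u}$, together with the precise dependence of the constant in Lemma~\ref{lem:serrin}(iii) on the scale-invariant norm of $a$. If the direct chain-rule computation is troublesome, we would instead apply (iii) with $p$ replaced by a general exponent via Moser iteration on $u^+$-type test functions $e^{ku}\eta^p$, which are admissible by local boundedness.
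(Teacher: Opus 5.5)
Your proposal is correct and follows essentially the same route as the paper. Continuity comes from Proposition~\ref{prop:est_exp} with $\alpha_\varepsilon$ and Lemma~\ref{lem:serrin}(i), and the bound $e^{Nu/p}\in L^1$ outside $B_{R_0+2}$ comes from $\alpha_*=\frac Np-1$. The decay then follows from Lemma~\ref{lem:serrin}(iii) applied to $w=e^{Nu/p^2}$, whose subsolution inequality you justify by testing with $w^{p-1}\varphi$ once $u$ is locally bounded, so the Moser-iteration fallback is not needed.
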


\begin{proof}
Put $\alpha_*:=\frac Np-1$. The assumption on $N$ means exactly that $0<\alpha_*<\frac4{p-1}=\frac4\lambda$. Fix $\varepsilon\in(0,p)$ so small that $\alpha_\varepsilon:=\frac N{p-\varepsilon}-1<\frac4{p-1}$, and put $s_\varepsilon:=\alpha_\varepsilon+1=\frac N{p-\varepsilon}$.

\medskip\noindent\textit{Step 1: continuity.} By Proposition~\ref{prop:est_exp} with $\alpha=\alpha_\varepsilon$ and cutoff functions supported in small balls, $e^u\in L^{s_\varepsilon}_{\rm loc}(\mathbb{R}^N\setminus K)$. Since $s_\varepsilon>\frac Np$, Lemma~\ref{lem:serrin}(i) shows that $u$ is locally bounded and continuous in $\mathbb{R}^N\setminus K$.

\medskip\noindent\textit{Step 2: an integral bound at infinity.} Proposition~\ref{prop:est_exp} with $\alpha=\alpha_*$, $m=\alpha_*+1$, and $\eta=\zeta_R$ gives
\[
\int_{B_R\setminus B_{R_0+2}}e^{\frac Npu}\le C+CR^{N-p(\alpha_*+1)}=2C,
\]
so $e^{\frac Npu}\in L^1(\mathbb{R}^N\setminus B_{R_0+2})$.

\medskip\noindent\textit{Step 3: a local maximum principle at infinity.} Let $\theta>0$ and $w:=e^{\theta u}$. Since $u$ is locally bounded in $\mathbb{R}^N\setminus K$, $w\in W^{1,p}_{\rm loc}(\mathbb{R}^N\setminus K)$ and $\nabla w=\theta w\nabla u$. By the homogeneity of $\mathcal A$, $\mathcal A(\nabla w)=\theta^{p-1}w^{p-1}\mathcal A(\nabla u)$. For $0\le\varphi\in C^1_c(\mathbb{R}^N\setminus K)$, the function $w^{p-1}\varphi$ is bounded, belongs to $W^{1,p}$, and has compact support, so it is admissible in the equation, and $\langle\mathcal A(\nabla u),\nabla u\rangle=H(\nabla u)^p\ge0$. Hence
\[
\int\langle\mathcal A(\nabla w),\nabla\varphi\rangle=\theta^{p-1}\int\langle\mathcal A(\nabla u),\nabla(w^{p-1}\varphi)\rangle-(p-1)\theta^p\int w^{p-1}H(\nabla u)^p\varphi\le\theta^{p-1}\int e^uw^{p-1}\varphi .
\]
That is, $-\Delta_p^Hw\le a(x)w^{p-1}$ with $a=\theta^{p-1}e^u$. Now take $\theta:=\frac N{p^2}$, so that $w^p=e^{\frac Npu}$. Let $R\ge R_0+2$ and $|y|=8R$. As in the proof of Lemma~\ref{lem:decay_sing}, Proposition~\ref{prop:est_exp} with $\alpha=\alpha_\varepsilon$ and $\eta(x)=\eta_{2R}(x-y)$ gives $\int_{B_{2R}(y)}e^{s_\varepsilon u}\le CR^{N-ps_\varepsilon}$, hence
\[
R^{p-\frac N{s_\varepsilon}}\|a\|_{L^{s_\varepsilon}(B_{2R}(y))}\le CR^{\varepsilon}\big(R^{N-ps_\varepsilon}\big)^{\frac{p-\varepsilon}N}=C .
\]
Lemma~\ref{lem:serrin}(iii) gives $\sup_{B_R(y)}w\le K_*R^{-\frac Np}\|w\|_{L^p(B_{2R}(y))}$ with $K_*$ independent of $R$ and $y$. Since $w^{1/\theta}=e^u$, this reads
\begin{equation}\label{eq:locmax}
\sup_{B_R(y)}e^u\le K_*^{\frac1\theta}R^{-p}\Big(\int_{B_{2R}(y)}e^{\frac Npu}\Big)^{\frac pN}.
\end{equation}

\medskip\noindent\textit{Step 4: conclusion.} Let $\vartheta>0$ and choose $R_\vartheta\ge R_0+2$ with $\int_{\{|x|>R_\vartheta\}}e^{\frac Npu}\le\vartheta$. For $|y|\ge8R_\vartheta$ let $R=|y|/8$. Then $B_{2R}(y)\subset\{|x|>6R\}$, and \eqref{eq:locmax} gives $|y|^pe^{u(y)}\le8^pK_*^{1/\theta}\vartheta^{p/N}$. Since $\vartheta$ is arbitrary, the lemma follows.
\end{proof}

In \cite{HL26b} the argument of Step~3 was applied to $e^u$ without knowing that $e^u\in W^{1,p}_{\rm loc}$. That fact follows from Step~1, which uses the full range $N<\frac{p(p+3)}{p-1}$.

\begin{proof}[Proof of Theorem~\ref{thm:outside}]
(a) Let $u$ be a positive weak solution of $\Delta_p^Hu=u^{-q}$ that is stable outside a compact set $K$, and choose $R_0'$ with $K\subset\{H_0\le R_0'\}$. By Lemma~\ref{lem:decay_sing}, $u$ is continuous in $\{H_0>R_0'\}$, and since $H_0$ is equivalent to the Euclidean norm, $r^{-\tau}\min_{\{H_0=r\}}u\to+\infty$ as $r\to\infty$. This contradicts Theorem~\ref{thm:exterior}(a), applied with $R_0'$ in place of $R_0$.

(b) Similarly, Lemma~\ref{lem:decay_exp} gives $r^p\max_{\{H_0=r\}}e^u\to0$, which contradicts Theorem~\ref{thm:exterior}(b), since $N>p$.

The statement on finite Morse index follows from the observation after the definition of stability.
\end{proof}

\begin{remark}\label{rem:HL2}
Lemma~\ref{lem:decay_sing} follows \cite[proof of Proposition~3.2]{GM16}. It gives more than the lower bound $u\ge K_1|x|^\tau$ obtained there, and this is needed: a lower bound with a constant $K_1\le c_s$ would not contradict Theorem~\ref{thm:exterior}(a). Theorem~\ref{thm:outside}(b) has the same statement as \cite[Theorem~3(i)]{HL26b}, while \cite[Theorem~4]{HL26a} cannot hold as stated when $N=p>2$ and $q\le\frac{(p-2)^2}4$, by Theorem~\ref{thm:sharp}(a).
\end{remark}

\section{The case \texorpdfstring{$N=p$}{N=p}: classification}\label{sec_N}

The proof of Theorem~\ref{thm:Nliouville} has three steps. We first obtain the scale-invariant pointwise decay $e^u\le C|x|^{-N}$. We then combine annular energy estimates with a weighted logarithmic cutoff to prove that $e^u$ has finite mass. The classification theorem of Ciraolo and Li then identifies the solution, while a Hardy-type argument proves the converse stability statement.

In this section $B(t)=t^N/N$, so $\kappa=0$, $\Omega_1=\emptyset$, and $\mathcal M(\xi)=D^2(H^N/N)(\xi)$ for $\xi\ne0$. By \eqref{eq:homog} and \eqref{eq:Mbound}, for $\xi\neq0$,
\begin{equation}\label{eq:MN}
\langle\mathcal M(\xi)\xi,\xi\rangle=(N-1)H(\xi)^N,\qquad |\mathcal M(\xi)|\le\Lambda_1H(\xi)^{N-2},
\end{equation}
and for $N=2$ we choose $\Lambda_1$ so large that also $|\mathcal M(0)|\le\Lambda_1$. When $N=2$, expressions involving $H(\nabla u)^{N-2}$ are understood with the convention $H(\nabla u)^0=1$. The equation $-\Delta_N^Hu=e^u$ is invariant under $u\mapsto u(R\,\cdot)+N\log R$. If $u\in W^{1,N}_{\rm loc}(\mathbb{R}^N)$ is a weak solution, then $e^{|u|}\in L^s_{\rm loc}$ for every $s<\infty$ by the Trudinger--Moser inequality \cite{Tru67b}, and Lemma~\ref{lem:serrin}(i) shows that $u$ is locally bounded and continuous. In particular the condition $e^u\in L^1_{\rm loc}$ in the definition of a weak solution is automatic, and bounded test functions such as $e^{\theta u}\varphi$ are admissible.

\begin{lemma}\label{lem:decay_N}
Assume $(H)$ and $N\ge2$, and let $u$ be a weak solution of $-\Delta_N^Hu=e^u$ that is stable outside a compact set $K\subset B_{R_0}$. There is $C_0$ such that $e^{u(x)}\le C_0|x|^{-N}$ for $|x|\ge8(R_0+2)$.
\end{lemma}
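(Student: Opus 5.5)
We follow the scheme of Lemma~\ref{lem:decay_exp}, now in the borderline case $N=p$ where $\alpha_*=\frac Np-1=0$. This means the $L^1$ integral bound at infinity is replaced by a uniform bound on dyadic balls. Only boundedness of $|x|^Ne^u$, not decay to $0$, is claimed, so a scale-invariant mass bound suffices.

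\textbf{Step 1 (local integral bound on balls at infinity).} Fix a small $\alpha\in(0,\frac4{N-1})$, for instance $\alpha=\frac1{N}$, and put $s:=1+\alpha>1=\frac NN$. Let $R\ge R_0+2$ and $|y|=8R$. Then $B_{4R}(y)\subset\{4R<|x|<12R\}$, which avoids $K$. Apply Proposition~\ref{prop:est_exp} with $B(t)=t^N/N$ (so $\kappa=0$ and $\Omega_1=\emptyset$), with $\eta(x)=\eta_{2R}(x-y)$ and $m=\alpha+1$. This gives
\[
\int_{B_{2R}(y)}e^{su}\le CR^{N-Ns},
\]
with $C$ independent of $R$ and $y$. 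Equivalently, the rescaled function $u_R(z):=u(y+Rz)+N\log R$, which again solves $-\Delta_N^Hu_R=e^{u_R}$, satisfies $\int_{B_2(0)}e^{su_R}\le C$ uniformly.

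\textbf{Step 2 (local maximum principle).} As in Step~3 of Lemma~\ref{lem:decay_exp}, set $w:=e^{\theta u}$ with $\theta>0$. Since $u$ is locally bounded and continuous by the Trudinger--Moser remark preceding the lemma, the test function $w^{N-1}\varphi$ is admissible, and we obtain $-\Delta_N^Hw\le a(x)w^{N-1}$ with $a=\theta^{N-1}e^u$. Choose $\theta=\frac sN$, so that $w^N=e^{su}$. By Step~1 and H\"older's inequality, $a\in L^{s}(B_{2R}(y))$ with $R^{N-\frac Ns}\|a\|_{L^s(B_{2R}(y))}\le CR^{N-\frac Ns}R^{\frac{N-Ns}{s}}=C$. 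Lemma~\ref{lem:serrin}(iii) then gives
\[
\sup_{B_R(y)}e^{su}\le K_*^NR^{-N}\int_{B_{2R}(y)}e^{su}\le CR^{-Ns}.
\]
Taking the $s$-th root yields $e^{u(y)}\le CR^{-N}=C8^N|y|^{-N}$ for all $|y|\ge8(R_0+2)$.

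\textbf{Main obstacle.} The delicate point is checking that Proposition~\ref{prop:est_exp} applies with constants uniform in $R$ and $y$. The cutoff $\eta_{2R}(\cdot-y)$ is supported in $B_{4R}(y)$, where stability holds, and the right side $C\int|\nabla\eta|^{N(\alpha+1)}\le CR^{N-N(1+\alpha)}$ has a constant depending only on $\alpha$ and $m$. The hypothesis $\alpha<\frac4\lambda=\frac4{N-1}$ holds for small $\alpha$, and since we need only $s>1=N/p$ (no $L^1$ bound at infinity), any small $\alpha>0$ works. In particular no restriction on $N$ is needed. The scaling of the Harnack-type constant in Lemma~\ref{lem:serrin}(iii) through $R^{N-N/s}\|a\|_{L^s}$ then closes the argument.
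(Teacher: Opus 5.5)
Your proposal is correct and follows the paper's proof essentially step by step. The paper uses the same ingredients: Proposition~\ref{prop:est_exp} on $B_{2R}(y)$ with $|y|=8R$, then $w=e^{\theta u}$ with $\theta=\frac{1+\alpha}N$ as a subsolution, then Lemma~\ref{lem:serrin}(iii) with the scale-invariant bound on $a=\theta^{N-1}e^u$ in $L^{1+\alpha}$. The only difference is cosmetic: the paper raises $\sup w$ directly to the power $1/\theta$, while you raise to the power $N$ and then take the $s$-th root.
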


\begin{proof}
Fix $\alpha\in(0,\frac4{N-1})$, and let $R\ge R_0+2$ and $|y|=8R$. As in Step~3 of the proof of Lemma~\ref{lem:decay_exp}, Proposition~\ref{prop:est_exp} with $p=N$ and $\eta(x)=\eta_{2R}(x-y)$ gives
\begin{equation}\label{eq:locintN}
\int_{B_{2R}(y)}e^{(1+\alpha)u}\le CR^{N-N(1+\alpha)}=CR^{-N\alpha},
\end{equation}
with $C$ independent of $R$ and $y$. Let $\theta:=\frac{1+\alpha}N$ and $w:=e^{\theta u}$. The computation in Step~3 of the proof of Lemma~\ref{lem:decay_exp}, which is valid for every $\theta>0$, shows that $-\Delta_N^Hw\le a(x)w^{N-1}$ with $a=\theta^{N-1}e^u$. With $s=1+\alpha>1$, \eqref{eq:locintN} gives $R^{N-\frac Ns}\|a\|_{L^s(B_{2R}(y))}\le CR^{N-\frac Ns}R^{-\frac{N\alpha}s}=C$, and Lemma~\ref{lem:serrin}(iii) gives
\[
\sup_{B_R(y)}e^{\theta u}\le K_*R^{-1}\Big(\int_{B_{2R}(y)}e^{(1+\alpha)u}\Big)^{\frac1N}\le CR^{-1-\alpha}.
\]
Raising to the power $\frac1\theta=\frac N{1+\alpha}$ gives $e^{u(y)}\le CR^{-N}=8^NC|y|^{-N}$.
\end{proof}

Let $u$ be as in Lemma~\ref{lem:decay_N}, and fix an integer $j_0$ with $2^{j_0-2}\ge8(R_0+2)$. Consider the annuli
\[
\widetilde A:=\{\tfrac14<|z|<8\},\qquad A':=\{\tfrac12<|z|<4\},\qquad A'':=\{1<|z|<2\},\qquad A_j:=\{2^j<|x|<2^{j+1}\},
\]
and, for $j\ge j_0$, the rescaled functions $u_j(z):=u(2^jz)+Nj\log2$, $z\in\widetilde A$. Each $u_j$ solves the equation in $\widetilde A$, and by Lemma~\ref{lem:decay_N}, $e^{u_j(z)}\le C_0|z|^{-N}\le4^NC_0$ there. Put $L:=\log(4^NC_0)$, so that $u_j\le L$ in $\widetilde A$, and
\[
s_j:=L-\sup_{A'}u_j\ \ge0 .
\]
This number measures how far $e^u$ stays below the critical size $|x|^{-N}$ near $A_j$.

\begin{lemma}\label{lem:annulus}
There is $A_*$, independent of $j\ge j_0$, such that
\[
\mu_j:=\int_{A_j}e^u\le A_*e^{-s_j}\qquad\text{and}\qquad W_j:=\int_{A_j}\frac{H(\nabla u)^{N-2}}{|x|^2}\le A_*(1+s_j)^{N-2}.
\]
\end{lemma}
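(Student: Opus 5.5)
The plan is to rescale each annulus $A_j$ to the fixed annulus $A''$, where both quantities become integrals of $u_j$. Then the mass bound is immediate from the definition of $s_j$, and the gradient bound follows from a Harnack inequality for $L-u_j$ combined with a Caccioppoli estimate.

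\emph{Scaling.} With $x=2^jz$ we have $dx=2^{jN}dz$, $e^{u(x)}=2^{-jN}e^{u_j(z)}$ and $\nabla u(x)=2^{-j}\nabla u_j(z)$. Hence
\[
\mu_j=\int_{A''}e^{u_j}\,dz,\qquad W_j=\int_{A''}\frac{H(\nabla u_j)^{N-2}}{|z|^2}\,dz\le\int_{A''}H(\nabla u_j)^{N-2}\,dz ,
\]
since the powers of $2^j$ cancel exactly ($-j(N-2)-2j+jN=0$ in the second integral).

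\emph{Mass bound.} Since $A''\subset A'$, we have $u_j\le\sup_{A'}u_j=L-s_j$ on $A''$. Therefore
\[
\mu_j\le|A''|\,e^{L}e^{-s_j}.
\]

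\emph{Gradient bound, Harnack step.} For $N=2$ the weight is $H(\nabla u)^0=1$, so $W_j\le|A''|$ and there is nothing to prove. For $N\ge3$, set $w:=L-u_j\ge0$ on $\widetilde A$; it is continuous, since $u$ is continuous by Lemma~\ref{lem:serrin}(i). Because $\mathcal A$ is odd, $-\Delta_N^Hw=f$ with $f:=-e^{u_j}$, and $|f|\le e^L$ on $\widetilde A$, a bound independent of $j$. Fix $\rho=\frac1{16}$, so that every ball $B_{2\rho}(y)$ with $y\in\overline{A'}$ lies in $\widetilde A$. Cover the connected compact set $\overline{A'}$ by finitely many such balls $B_\rho(y_i)$ forming a chain. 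By continuity, $\inf_{A'}w=s_j$, and this infimum is approached in some ball of the chain. Applying Lemma~\ref{lem:serrin}(iv) along the chain, with $K_*$ depending only on $\rho$ and $e^L$, gives
\[
\sup_{A'}w\le C(1+s_j),
\]
with $C$ independent of $j$.

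\emph{Gradient bound, Caccioppoli step.} Take $\zeta\in C^1_c(A')$ with $\zeta=1$ on $A''$, and test the equation for $w$ with $w\zeta^N$, which is admissible since $w$ is bounded there. Using $\langle\mathcal A(\nabla w),\nabla w\rangle=H(\nabla w)^N$, the bound $|\mathcal A(\xi)|\le CH(\xi)^{N-1}$, and Young's inequality to absorb the cross term, we get
\[
\int H(\nabla w)^N\zeta^N\le C\int w^N|\nabla\zeta|^N+\int e^{u_j}w\zeta^N\le C(1+s_j)^N .
\]
Since $\nabla w=-\nabla u_j$ and $H$ is even, Hölder's inequality then gives
\[
\int_{A''}H(\nabla u_j)^{N-2}\le|A''|^{2/N}\Big(\int_{A''}H(\nabla u_j)^N\Big)^{\frac{N-2}N}\le C(1+s_j)^{N-2}.
\]

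The only delicate point is the Harnack step. It must produce a bound that is linear in $s_j$ with constants independent of $j$. This works because, after rescaling, the right-hand side is uniformly bounded by $e^L$ thanks to Lemma~\ref{lem:decay_N}, and the geometry of $\widetilde A\supset A'$ is fixed. We also need the chain covering of $\overline{A'}$, which uses that $A'$ is connected for $N\ge2$.
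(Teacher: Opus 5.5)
Your proposal is correct and follows the paper's proof essentially step for step. You use the same rescaling to $A''$, the same Harnack chain for $L-u_j$ via Lemma~\ref{lem:serrin}(iv), and the same Caccioppoli and H\"older argument; the only difference is that you test with $(L-u_j)\zeta^N$ instead of $(u_j-\inf_{A'}u_j)\zeta^N$, which is immaterial (with your choice the term $-\int e^{u_j}w\zeta^N$ is nonpositive and could simply be dropped).
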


\begin{proof}
Both quantities are invariant under the scaling, so $\mu_j=\int_{A''}e^{u_j}$ and $W_j=\int_{A''}H(\nabla u_j)^{N-2}|z|^{-2}$. The first bound follows from $u_j\le L-s_j$ on $A''\subset A'$.

For the second, let $Z_j:=L-u_j\ge0$ in $\widetilde A$. Since $\mathcal A$ is odd, $-\Delta_N^HZ_j=\Delta_N^Hu_j=-e^{u_j}$, and $e^{u_j}\le e^L$. Covering $A'$ by a fixed finite chain of balls $B_\rho(y)$ with $B_{2\rho}(y)\subset\widetilde A$ and applying Lemma~\ref{lem:serrin}(iv), we get $\sup_{A'}Z_j\le K_*(\inf_{A'}Z_j+1)=K_*(1+s_j)$ with $K_*$ independent of $j$. Hence $\operatorname{osc}_{A'}u_j\le K_*(1+s_j)$. Let $k:=\inf_{A'}u_j$ and let $\zeta\in C^1_c(A')$ satisfy $0\le\zeta\le1$, $\zeta=1$ on $A''$, and $|\nabla\zeta|\le8$. Testing the equation of $u_j$ with $\zeta^N(u_j-k)\ge0$, we get
\[
\int\zeta^NH(\nabla u_j)^N=\int e^{u_j}\zeta^N(u_j-k)-N\int\zeta^{N-1}(u_j-k)\langle\mathcal A(\nabla u_j),\nabla\zeta\rangle .
\]
Since $|\mathcal A(\xi)|\le MH(\xi)^{N-1}$, Young's inequality gives $\int_{A''}H(\nabla u_j)^N\le C\int_{A'}(u_j-k)^N+Ce^L\int_{A'}(u_j-k)\le C(1+s_j)^N$. H\"older's inequality and $|z|\ge1$ on $A''$ give $W_j\le|A''|^{\frac2N}\big(\int_{A''}H(\nabla u_j)^N\big)^{\frac{N-2}N}\le C(1+s_j)^{N-2}$.
\end{proof}

\begin{proposition}\label{prop:mass}
Assume $(H)$ and $N\ge2$. If $u$ is a weak solution of $-\Delta_N^Hu=e^u$ that is stable outside a compact set, then $\int_{\mathbb{R}^N}e^u<\infty$.
\end{proposition}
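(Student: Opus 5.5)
We plan to test the stability inequality with a weighted logarithmic cutoff, following the strategy announced in the introduction. Since $\mu_j\le A_*e^{-s_j}$ by Lemma~\ref{lem:annulus}, it suffices to show $\sum_{j\ge j_0}\mu_j<\infty$; the mass in the bounded region $\{|x|<2^{j_0}\}$ is finite because $u$ is locally bounded. We split the indices into ``large gap'' annuli, where $s_j$ is large, and ``small gap'' annuli, where $s_j$ is bounded. The mass of a large-gap annulus is small, $\mu_j\le A_*e^{-s_j}$. The cost of that annulus in the stability estimate is at most $W_j\le A_*(1+s_j)^{N-2}$, which is only polynomial in $s_j$. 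This mismatch between exponential gain and polynomial cost is what makes the argument work.

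Concretely, we take $\varphi(x)=\psi(\log_2|x|)$ with $\psi$ piecewise linear. It is supported in $j_0+1\le t\le J$, where $J$ is large, and it is equal to $1$ on a long stretch. Stability outside $K$ together with \eqref{eq:MN} gives
\[
\int e^u\varphi^2\le Q_u(\varphi)\le\Lambda_1\int H(\nabla u)^{N-2}|\nabla\varphi|^2\le C\sum_j\psi'_j{}^2\,W_j,
\]
where $\psi'_j$ denotes the slope of $\psi$ on $[j,j+1]$ and $|\nabla\varphi|\approx|\psi'|/(|x|\log2)$. Using Lemma~\ref{lem:annulus}, the right-hand side is bounded by $C\sum_j\psi_j'^2(1+s_j)^{N-2}$. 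We choose the slopes to minimize this sum subject to $\sum_j\psi'_j=1$ over the decay region $[M,J]$. The minimum equals $\big(\sum_{j=M}^{J}(1+s_j)^{2-N}\big)^{-1}$, with $\psi'_j$ proportional to $(1+s_j)^{2-N}$; this is the weighted logarithmic cutoff.

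This leads to two cases. If $\sum_j(1+s_j)^{2-N}=\infty$, which holds for instance when $s_j$ stays bounded along infinitely many $j$ (``gaps are small often enough''), then letting $J\to\infty$ makes the right-hand side tend to zero. Hence $\int_{\{2^{j_0+1}<|x|<2^M\}}e^u\le C$ uniformly in $M$ (for $N=2$, as well as for general $N$, we keep a fixed inner cutoff near $2^{j_0}$ whose cost is a constant). This gives finite mass. If instead $\sum_j(1+s_j)^{2-N}<\infty$, then $s_j\to\infty$. When $N\ge3$ this summability alone does not force $\sum e^{-s_j}<\infty$ directly, but the exponential weight does: we have $e^{-s_j}\le C_k(1+s_j)^{-k}$ for every $k$. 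So we must show that $s_j$ grows at least like $\log j$, or split differently: the set of $j$ with $s_j\le 2\log j$ gives a divergent weighted sum unless it is sparse.

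The main obstacle is making this dichotomy rigorous. We would first try the following. Set $G=\{j: s_j\le 2\log j\}$. Off $G$ we have $\mu_j\le A_*j^{-2}$, which is summable. On $G$, the weights satisfy $(1+s_j)^{2-N}\ge(1+2\log j)^{2-N}$. If $G$ is infinite with $\sum_{j\in G}(\log j)^{2-N}=\infty$, the logarithmic cutoff argument gives finite mass. Otherwise, the contribution of $G$ is $\sum_{j\in G}\mu_j\le A_*\#(G\cap[\ldots])$, and this requires refining the threshold, for example taking $s_j\le\sigma$ with a fixed constant and a second layer in between. If that bookkeeping fails, the fallback is to apply the cutoff argument on blocks of consecutive indices and bound the mass of each block by $C\big(\sum(1+s_j)^{2-N}\big)^{-1}$ over an adjacent block, then sum.
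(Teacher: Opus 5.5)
Your setup and your first alternative match the paper's argument. You use the cutoff $\psi(\log_2|x|)$ with a fixed inner ramp, choose slopes proportional to $(1+s_j)^{2-N}$ via Cauchy--Schwarz, and let $J\to\infty$ when $\sum_j(1+s_j)^{2-N}=\infty$. You should also note that the stability inequality extends to these Lipschitz test functions by the density argument of Step~1 in Lemma~\ref{lem:key}.

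\textbf{The gap is in the second alternative, which you leave open.} You assert that $\sum_j(1+s_j)^{2-N}<\infty$ ``does not force $\sum_je^{-s_j}<\infty$ directly''. That assertion is false, and the inequality you write right after it refutes it. Take $k=N-2$ in $e^{-s}\le C_k(1+s)^{-k}$, which holds for $s\ge0$ because $(1+s)^{N-2}e^{-s}$ is bounded on $[0,\infty)$. This gives, term by term, $\mu_j\le A_*e^{-s_j}\le A_*C_N(1+s_j)^{2-N}$. The comparison test then yields $\sum_{j\ge j_0}\mu_j<\infty$ at once.

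No growth rate such as $s_j\gtrsim\log j$ is needed, and this one line is exactly how the paper concludes. The detour through $G=\{j:s_j\le2\log j\}$ is unnecessary. Within the second alternative, your subcase $\sum_{j\in G}(\log j)^{2-N}=\infty$ cannot even occur, since $(1+s_j)^{2-N}\ge(1+2\log j)^{2-N}$ on $G$ would make $\sum_j(1+s_j)^{2-N}$ diverge. Your remaining fallbacks (``refining the threshold'', cutoffs on blocks) are never carried out. As written, then, the proposal does not complete the proof, although the missing step is elementary and already implicit in your text.
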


\begin{proof}
Put $w_j:=(1+s_j)^{N-2}$. For $N=2$, $w_j=1$. Fix integers $j_0<J<J'$, and let $\eta(x)=\omega(\log_2|x|)$, where $\omega$ is continuous and piecewise linear, $\omega=0$ on $(-\infty,j_0-1]$, $\omega$ increases linearly to $1$ on $[j_0-1,j_0]$, $\omega=1$ on $[j_0,J]$, $\omega=0$ on $[J',\infty)$, and on $[j,j+1]$, $J\le j<J'$, $\omega$ has slope $-c_j$ with $c_j\ge0$ and $\sum_{j=J}^{J'-1}c_j=1$. Then $\eta$ is Lipschitz with compact support in $\{|x|>2^{j_0-1}\}\subset\mathbb{R}^N\setminus K$, and $|\nabla\eta|\le c_j/(|x|\log2)$ on $A_j$ for $J\le j<J'$. The stability inequality, which holds for such $\eta$ by the density argument of Step~1 in the proof of Lemma~\ref{lem:key}, together with \eqref{eq:MN} and Lemma~\ref{lem:annulus}, gives
\[
\sum_{j=j_0}^{J-1}\mu_j\le\int e^u\eta^2\le\Lambda_1\int H(\nabla u)^{N-2}|\nabla\eta|^2\le D_0+C\sum_{j=J}^{J'-1}c_j^2w_j ,
\]
where
\[
D_0:=\Lambda_1\int_{A_{j_0-1}}H(\nabla u)^{N-2}|\nabla\eta|^2
\]
is finite and independent of $J$ and $J'$. The choice $c_j=w_j^{-1}\big(\sum_{i=J}^{J'-1}w_i^{-1}\big)^{-1}$ minimizes the last sum, and we get
\[
\sum_{j=j_0}^{J-1}\mu_j\le D_0+C\Big(\sum_{j=J}^{J'-1}w_j^{-1}\Big)^{-1}\qquad(j_0<J<J').
\]
If $\sum_jw_j^{-1}=\infty$, we let $J'\to\infty$ and obtain $\sum_{j\ge j_0}\mu_j\le D_0$. If $\sum_jw_j^{-1}<\infty$, Lemma~\ref{lem:annulus} and the inequality $e^{-s}\le C_N(1+s)^{-(N-2)}$ for $s\ge0$ give $\sum_{j\ge j_0}\mu_j\le A_*C_N\sum_jw_j^{-1}<\infty$. In both cases $\int_{\{|x|>2^{j_0}\}}e^u<\infty$, and $e^u$ is locally integrable.
\end{proof}

For $N=2$ only the first alternative occurs, and the argument is the logarithmic cutoff argument of \cite{Far07,DF09}. The integral estimate alone does not give finite mass when $N\ge3$: by \eqref{eq:locintN}, each dyadic annulus carries mass at most $C$, which is the scale-invariant bound and is not summable. For $N\ge4$ the functions \eqref{eq:bubbles} themselves fall into the second alternative, since $e^u$ decays like $|x|^{-N^2/(N-1)}$ and $s_j$ grows linearly in $j$.

\begin{proof}[Proof of Theorem~\ref{thm:Nliouville}]
If $u$ is stable outside a compact set, then $e^u\in L^1(\mathbb{R}^N)$ by Proposition~\ref{prop:mass}, and $u$ is one of the functions \eqref{eq:bubbles} by \cite[Theorem~1.1]{CL24}.

Conversely, let $u=U_{\ell,x_0}$. By translation invariance we may assume $x_0=0$. Put $r=H_0(x)$ and $\beta=\frac N{N-1}$. By Lemma~\ref{lem:dual},
\[
H(\nabla u(x))=\frac{N\beta\ell^\beta r^{\beta-1}}{1+\ell^\beta r^\beta}\ \ge\ \frac c{|x|}\qquad\text{and}\qquad e^{u(x)}\le C|x|^{-N\beta}\qquad(|x|\ge1).
\]
Since $D^2(H^2/2)\ge\lambda_0I$ and $\mathcal M(\xi)=H(\xi)^{N-2}\big[(N-2)\nabla H(\xi)\otimes\nabla H(\xi)+D^2(H^2/2)(\xi)\big]$, we have $\langle\mathcal M(\xi)\eta,\eta\rangle\ge\lambda_0H(\xi)^{N-2}|\eta|^2$ for $\xi\neq0$ and $\eta\in\mathbb{R}^N$. Choose $R_1>1$ such that
\[
C|x|^{-N\beta}\le\frac{\lambda_0c^{N-2}}4\,|x|^{-N}(\log|x|)^{-2}\qquad\text{for }|x|\ge R_1 .
\]
The function $v=(\log|x|)^{1/2}$ satisfies $-\operatorname{div}(|x|^{2-N}\nabla v)=\frac{v}{4|x|^N(\log|x|)^2}$ in $\{|x|>1\}$, so for $\varphi=v\psi\in C^1_c(\{|x|>R_1\})$,
\[
\int\frac{|\nabla\varphi|^2}{|x|^{N-2}}=\int\frac{\varphi^2}{4|x|^N(\log|x|)^2}+\int\frac{v^2|\nabla\psi|^2}{|x|^{N-2}}\ \ge\ \int\frac{\varphi^2}{4|x|^N(\log|x|)^2}.
\]
Hence $Q_u(\varphi)\ge\lambda_0c^{N-2}\int|x|^{2-N}|\nabla\varphi|^2\ge\int e^u\varphi^2$, and $u$ is stable outside $\overline{B_{R_1}}$.
\end{proof}

\section{Sharpness}\label{sec_sharp}

In this section $B(t)=t^p/p$, so for $\xi\neq0$
\[
\mathcal M(\xi)=D^2(H^p/p)(\xi)=(p-1)H(\xi)^{p-2}\nabla H(\xi)\otimes\nabla H(\xi)+H(\xi)^{p-1}D^2H(\xi).
\]
 This matrix is symmetric and positive semidefinite since $H^p$ is convex. All three parts of Theorem~\ref{thm:sharp} rest on the same transfer of stability from a radial function to the anisotropic problem.

\begin{lemma}\label{lem:picone}
Assume that $(H)$ holds and $p\ge2$. Let $\Omega\subset\mathbb{R}^N$ be open, let $u\in C^1(\Omega)$ with $\nabla u\ne0$ in $\Omega$, and let $0\le V\in L^1_{\rm loc}(\Omega)$. Suppose that $W\in C^1(\Omega)$ is positive and satisfies
\[
\int_\Omega\langle\mathcal M(\nabla u)\nabla W,\nabla\zeta\rangle\ge\int_\Omega VW\zeta
\]
for every nonnegative Lipschitz function $\zeta$ with compact support in $\Omega$. Then $\int_\Omega\langle\mathcal M(\nabla u)\nabla\varphi,\nabla\varphi\rangle\ge\int_\Omega V\varphi^2$ for every $\varphi\in C^1_c(\Omega)$.
\end{lemma}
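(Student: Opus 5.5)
This is a Picone-type argument. Fix $\varphi\in C^1_c(\Omega)$ and write $A(x):=\mathcal M(\nabla u(x))$. Since $u\in C^1(\Omega)$ and $\nabla u\ne0$, the matrix $A$ is continuous, symmetric and positive semidefinite on $\Omega$, and it is bounded on the support of $\varphi$. The plan is to test the hypothesis with $\zeta:=\varphi^2/W$. This $\zeta$ is nonnegative, has compact support in $\Omega$, and is Lipschitz, indeed $C^1$: $W$ is $C^1$ and bounded below by a positive constant on $\operatorname{supp}\varphi$. Its gradient is
\[
\nabla\zeta=\frac{2\varphi}{W}\nabla\varphi-\frac{\varphi^2}{W^2}\nabla W .
\]
The hypothesis then gives
\[
\int_\Omega V\varphi^2=\int_\Omega VW\zeta\le\int_\Omega\Big(\frac{2\varphi}{W}\langle A\nabla W,\nabla\varphi\rangle-\frac{\varphi^2}{W^2}\langle A\nabla W,\nabla W\rangle\Big).
\]
Here we used the symmetry of $A$ to write $\langle A\nabla W,\nabla\varphi\rangle=\langle A\nabla\varphi,\nabla W\rangle$.

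The key pointwise step is the Cauchy--Schwarz inequality for the semidefinite form $A$. Set $X=\nabla\varphi$ and $Y=(\varphi/W)\nabla W$. Then
\[
0\le\langle A(X-Y),X-Y\rangle=\langle AX,X\rangle-2\langle AX,Y\rangle+\langle AY,Y\rangle .
\]
This says exactly that the integrand on the right is at most $\langle A\nabla\varphi,\nabla\varphi\rangle$. Integrating gives $\int V\varphi^2\le\int\langle\mathcal M(\nabla u)\nabla\varphi,\nabla\varphi\rangle$, which is the claim. All integrals are finite, since the integrands are continuous with compact support, apart from $V\varphi^2$, which is in $L^1$ because $V\in L^1_{\rm loc}$.

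The only point needing care is admissibility of $\zeta$: it must be Lipschitz with compact support and nonnegative. This follows from the positivity and continuity of $W$ on the compact set $\operatorname{supp}\varphi$. I do not expect a real obstacle. The assumption $\nabla u\ne0$ is used only to make $\mathcal M(\nabla u)$ well defined and continuous, so that the convention at $\xi=0$ plays no role.
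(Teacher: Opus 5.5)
Your proof is correct and is essentially the paper's argument. The paper also tests with $\zeta=W\psi^2=\varphi^2/W$, where $\psi=\varphi/W$, and its identity $\langle\mathcal M\nabla\varphi,\nabla\varphi\rangle=\langle\mathcal M\nabla W,\nabla(W\psi^2)\rangle+W^2\langle\mathcal M\nabla\psi,\nabla\psi\rangle$ is exactly your completed square, since $X-Y=\nabla\varphi-\psi\nabla W=W\nabla\psi$.
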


\begin{proof}
Write $\mathcal M=\mathcal M(\nabla u)$, let $\psi:=\varphi/W\in C^1_c(\Omega)$, and take $\zeta=W\psi^2$. Since $\nabla\varphi=\psi\nabla W+W\nabla\psi$ and $\mathcal M$ is symmetric,
\[
\langle\mathcal M\nabla\varphi,\nabla\varphi\rangle=\langle\mathcal M\nabla W,\nabla(W\psi^2)\rangle+W^2\langle\mathcal M\nabla\psi,\nabla\psi\rangle .
\]
Integrating, and using the hypothesis and $\mathcal M\ge0$, we obtain the claim.
\end{proof}

For part (a) we need the sign of $\tau v-rv'$ along radial solutions. This was proved in \cite[proof of Theorem~2.10]{GM16} for $p<N$. In that proof the lower bound \eqref{eq:Phi} below is obtained by multiplying linear lower bounds for the two factors of $\Phi$. The product of these bounds contains a negative quadratic term, which is discarded, and for $p>2$ this does not give a lower bound. We therefore give the argument in full. It follows \cite{GM16}, except that \eqref{eq:Phi} is derived from convexity, and it also covers $N=p$.

\begin{lemma}\label{lem:z}
Let $p\ge2$, $q>0$, and $N\ge2$, and assume that
\begin{equation}\label{eq:zcond}
N-(q+1)\tau>0\qquad\text{and}\qquad D:=\big[N-(q-1)\tau\big]^2-\frac{4p}{p-1}(N-q\tau)\ge0 .
\end{equation}
Let $a>0$, and let $v$ be the solution of $(r^{N-1}\varphi_p(v'))'=r^{N-1}v^{-q}$ on $(0,\infty)$ with $v(0)=a$ and $v'(0)=0$. Then $z(r):=\tau v(r)-rv'(r)>0$ for all $r\ge0$, and $v(r)>c_sr^\tau$ for all $r>0$.
\end{lemma}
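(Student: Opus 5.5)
The plan is to use the Emden--Fowler change of variables, which turns the radial equation into an autonomous second-order ODE whose equilibrium is the singular solution. Put $t=\log r$ and $v(r)=r^\tau w(t)$. A direct computation gives
\[
v'(r)=r^{\tau-1}\big(\tau w+\dot w\big),\qquad z(r)=\tau v-rv'=-r^\tau\dot w(t).
\]
Using $\frac{1-q\tau}{p-1}=\tau-1$, i.e. $(\tau-1)(p-1)=1-q\tau$, the equation $(r^{N-1}\varphi_p(v'))'=r^{N-1}v^{-q}$ becomes
\[
\frac{d}{dt}\varphi_p(\tau w+\dot w)+(N-q\tau)\,\varphi_p(\tau w+\dot w)=w^{-q}.
\]
This is autonomous, and its only equilibrium is $w\equiv c_s$, because $\tau^{p-1}c_s^{p-1}(N-q\tau)=c_s^{-q}$. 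So both claims of the lemma reduce to one statement about the trajectory: $\dot w<0$ for all $t$, and $w>c_s$ for all $t$. As $t\to-\infty$ we have $w=a e^{-\tau t}(1+o(1))\to+\infty$, and $\dot w\approx-\tau w<0$ there, since $v'(r)\sim (r/(Na^q))^{1/(p-1)}$ gives $z(0)=\tau a>0$. So the trajectory starts in the quadrant $\{w>c_s,\ \dot w<0\}$.

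Next, linearize at $(c_s,0)$. Writing $w=c_s+\epsilon$ and keeping first-order terms, the factor $(p-1)(\tau c_s)^{p-2}$ multiplies $\ddot\epsilon+\tau\dot\epsilon+(N-q\tau)(\dot\epsilon+\tau\epsilon)$, while the right-hand side contributes $-qc_s^{-q-1}\epsilon$. The characteristic polynomial is $\lambda^2+[N-(q-1)\tau]\lambda+\frac{p}{p-1}\tau(N-q\tau)$ up to normalization. Its discriminant has the sign of $D$, so $D\ge0$ gives two real eigenvalues $-k_1\le -k_2<0$, and $N-(q+1)\tau>0$ fixes the sign of the damping. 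The aim is an invariant wedge
\[
\mathcal W:=\{(w,\dot w):\ w>c_s,\ -k(w-c_s)<\dot w<0\},
\]
where $k$ is a root of the characteristic polynomial, so that the boundary line $\dot w=-k(w-c_s)$ is invariant for the linearization. I would then show three things in order. First, the trajectory lies in $\mathcal W$ for $t$ near $-\infty$; this follows from the asymptotics when $k>\tau$, or else I would use the larger root. Second, it cannot exit through $\{\dot w=0,\ w>c_s\}$: at such a point $\ddot w$ has the sign of $w^{-q}-(N-q\tau)\tau^{p-1}w^{p-1}<0$, so $\dot w$ cannot rise to $0$. Third, it cannot exit through the slanted line, which is where the nonlinearity matters. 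Exiting through $w=c_s$ is then impossible, because the wedge closes at the equilibrium and uniqueness for the ODE forbids reaching $(c_s,0)$ in finite time. Staying in $\mathcal W$ for all $t$ gives $\dot w<0$ and $w>c_s$, that is, $z>0$ and $v>c_sr^\tau$.

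The main obstacle is the third point, invariance across $\dot w=-k(w-c_s)$ for the full nonlinear equation. There one must bound $\frac{d}{dt}\varphi_p(\tau w+\dot w)$ from below on the line, i.e. obtain the lower bound the paper calls \eqref{eq:Phi}. As the remark before the lemma notes, multiplying linear lower bounds for the two factors fails for $p>2$. I would instead use convexity directly: $s\mapsto s^{p-1}$ is convex for $p\ge2$, and $s\mapsto s^{-q}$ is convex. On the line, the sign of the vector field reduces to comparing $(\tau w-k(w-c_s))^{p-1}$ and $w^{-q}$ with their tangent lines at $w=c_s$. Convexity of each function puts it above its tangent, and the tangent-line quantities vanish exactly because $k$ is a characteristic root. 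I would check which of the two convex terms enters with the favorable sign, possibly after changing the choice of $k$ or rewriting the inequality in terms of $\Phi=r^{N-1}\varphi_p(v')$, and I expect this sign bookkeeping to be the delicate part. For $N=p$ the same argument applies, since only \eqref{eq:zcond} is used.
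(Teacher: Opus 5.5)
Your architecture is the same as the paper's. You work in the phase plane of $X=r^{-\tau}v$ (your $w$), trap the trajectory in a wedge between $\dot w=0$ and a line through $(c_s,0)$ whose slope is a root of the characteristic quadratic, and get a contradiction at a crossing of that line. Your treatment of the edge $\dot w=0$ and of the vertex is fine.

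Two smaller points. First, your characteristic polynomial has a spurious factor $\tau$. The correct one is $\lambda^2+[N-(q-1)\tau]\lambda+\frac p{p-1}(N-q\tau)$, and only this one has discriminant $D$. Second, the hypothesis $N-(q+1)\tau>0$ is not about the sign of the damping, which is automatic. Together with $\tau<\frac p{p-1}$, it is exactly what makes both roots exceed $\tau$.

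The genuine gap is the slanted edge. It is the whole content of the lemma, you leave it open, and the mechanism you propose fails. Put $\sigma=w-c_s>0$, $\dot w=-k\sigma$ and $s:=\tau w+\dot w=\tau c_s-(k-\tau)\sigma>0$. On the line the equation gives
\[
(p-1)s^{p-2}(\ddot w+k\dot w)=(c_s+\sigma)^{-q}-(N-q\tau)s^{p-1}-(p-1)k(k-\tau)\sigma s^{p-2}.
\]
Here $s^{p-1}$ enters with a minus sign, so its convexity gives an upper bound rather than a lower one. There is also a third nonlinear term $\sigma s^{p-2}$ that you do not account for. Comparing $s^{p-1}$ and $w^{-q}$ separately with tangent lines therefore cannot give $\ddot w+k\dot w>0$ when $p>2$.

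The missing idea is to divide by $(p-1)s^{p-2}$ before using convexity:
\[
\ddot w+k\dot w=-k(k-\tau)\sigma+\frac1{p-1}\Big[\Phi(\sigma)-(N-q\tau)\big(\tau c_s-(k-\tau)\sigma\big)\Big],\qquad \Phi(\sigma):=s^{2-p}(c_s+\sigma)^{-q}.
\]
Now $\Phi$ is the only nonlinear term, and it enters with a plus sign. Its logarithm $(2-p)\log s-q\log(c_s+\sigma)$ is strictly convex in $\sigma$, because $p\ge2$ and $\log s$ is concave. Hence $\Phi(\sigma)>\Phi(0)+\Phi'(0)\sigma$, with $\Phi(0)=(N-q\tau)\tau c_s$ and $\Phi'(0)=(N-q\tau)[(p-2)(k-\tau)-q\tau]$; this is \eqref{eq:Phi}. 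Substituting, the constants cancel and
\[
\ddot w+k\dot w>-\sigma\Big(k^2-\big[N-(q-1)\tau\big]k+\frac p{p-1}(N-q\tau)\Big)=0
\]
when $k$ is a root. So the trajectory cannot leave through the slanted edge.

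With this step supplied, your invariant-wedge argument is a time-parametrized version of the paper's proof. The paper writes $X'=h(X)$ and uses $h'(\xi_\mu)\ge-\mu$ at the smallest zero of $h(\xi)+\mu(\xi-L)$. It contradicts this with the same convexity bound, taking $\mu=\mu_-$.
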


\begin{proof}
Integrating the equation, $v'>0$ on $(0,\infty)$ and $v'(r)=(a^{-q}r/N)^{\frac1{p-1}}(1+o(1))$ as $r\to0$. Put $s=\log r$,
\[
X(s):=r^{-\tau}v(r),\qquad Y(s):=\big(r^{1-\tau}v'(r)\big)^{p-1}>0 .
\]
Using $\tau(q+p-1)=p$, a direct computation gives
\[
X'=Y^{\frac1{p-1}}-\tau X,\qquad Y'=X^{-q}-(N-q\tau)Y,
\]
and $z=-r^\tau X'$. Note that $N-q\tau>N-(q+1)\tau>0$. The only equilibrium is $(L,(\tau L)^{p-1})$, where $L=c_s$, that is, $L^{-q}=(N-q\tau)(\tau L)^{p-1}$. As $s\to-\infty$, $X\to+\infty$, and $Y\to0$ because $\tau<\frac p{p-1}$. Moreover $X'+\tau X=Y^{1/(p-1)}\to0$.

\medskip\noindent\textit{Step 1: $X>L$ on $\mathbb{R}$.} Suppose not, and let $s_0$ be the first point with $X(s_0)=L$. On $(-\infty,s_0)$ we have $X>L$, hence $\big(e^{(N-q\tau)s}Y\big)'<e^{(N-q\tau)s}L^{-q}$. Integrating from $-\infty$ gives $Y<(\tau L)^{p-1}$ on $(-\infty,s_0]$, and therefore $X'=Y^{1/(p-1)}-\tau X<0$ there, including at $s_0$. So $X$ decreases strictly from $+\infty$ to $L$ on $(-\infty,s_0]$, and we can write $X'=h(X)$ with $h\in C^1([L,\infty))$ and $h<0$. Differentiating $X'=Y^{1/(p-1)}-\tau X$ and using the equation for $Y$, we find that
\[
h(\xi)h'(\xi)=-\tau h(\xi)+\frac1{p-1}\big(h(\xi)+\tau\xi\big)^{2-p}\xi^{-q}-\frac{N-q\tau}{p-1}\big(h(\xi)+\tau\xi\big),\qquad \xi\ge L,
\]
where $h(\xi)+\tau\xi=Y^{1/(p-1)}>0$, and $h(\xi)+\tau\xi\to0$ as $\xi\to\infty$.

Fix $\mu>\tau$. The function $\xi\mapsto h(\xi)-\mu(L-\xi)$ is negative at $\xi=L$ and equals $(\mu-\tau)\xi-\mu L+o(1)$ as $\xi\to\infty$. Let $\xi_\mu>L$ be its smallest zero. Then $h'(\xi_\mu)\ge-\mu$, and with $\sigma:=\xi_\mu-L>0$ we have $h(\xi_\mu)=-\mu\sigma$ and $h(\xi_\mu)+\tau\xi_\mu=\tau L-(\mu-\tau)\sigma>0$. Put
\[
\Phi(t):=\big(\tau L-(\mu-\tau)t\big)^{2-p}(L+t)^{-q},\qquad 0\le t<\frac{\tau L}{\mu-\tau}.
\]
Both $\log(\tau L-(\mu-\tau)t)^{2-p}$ and $\log(L+t)^{-q}$ are convex, the second strictly, so $\Phi$ is strictly convex and
\begin{equation}\label{eq:Phi}
\Phi(\sigma)>\Phi(0)+\Phi'(0)\sigma=(N-q\tau)\tau L+(N-q\tau)\big[(p-2)(\mu-\tau)-q\tau\big]\sigma,
\end{equation}
where we used $L^{-q}=(N-q\tau)(\tau L)^{p-1}$. Dividing the equation for $h$ at $\xi_\mu$ by $h(\xi_\mu)=-\mu\sigma<0$ and using \eqref{eq:Phi}, we obtain
\[
h'(\xi_\mu)=-\tau-\frac{\Phi(\sigma)}{(p-1)\mu\sigma}+\frac{(N-q\tau)\big(\tau L-(\mu-\tau)\sigma\big)}{(p-1)\mu\sigma}<-\tau+\frac{N-q\tau}{(p-1)\mu}\big[q\tau-(p-1)(\mu-\tau)\big].
\]
Combining this with $h'(\xi_\mu)\ge-\mu$, multiplying by $(p-1)\mu$, and using $\tau(q+p-1)=p$, we get
\begin{equation}\label{eq:quad}
\mu^2-\big[N-(q-1)\tau\big]\mu+\frac{p}{p-1}(N-q\tau)>0\qquad\text{for every }\mu>\tau .
\end{equation}
By \eqref{eq:zcond}, the left-hand side has real roots $\mu_-\le\mu_+$. Since $N-(q+1)\tau>0$, the inequality $\mu_->\tau$ is equivalent to $\big(N-(q+1)\tau\big)^2>D$, that is, to $-4\tau(N-q\tau)>-\frac{4p}{p-1}(N-q\tau)$, which holds because $\tau<\frac p{p-1}$. Taking $\mu=\mu_-$ in \eqref{eq:quad} gives a contradiction. Hence $X>L$ on $\mathbb{R}$.

\medskip\noindent\textit{Step 2: $X'<0$ on $\mathbb{R}$.} Since $X>L$ everywhere, the integration in Step~1 gives $Y<(\tau L)^{p-1}$ on $\mathbb{R}$, so $X'=Y^{1/(p-1)}-\tau X<\tau L-\tau L=0$. Thus $z=-r^\tau X'>0$ for $r>0$, and $z(0)=\tau a>0$.
\end{proof}

The conditions \eqref{eq:zcond} hold in the cases of Theorem~\ref{thm:sharp}(a) with $q\le q_c(N,p)$. If $N>p$, then $(q+1)\tau\le p<N$ since $p\ge2$, and, with $m:=N-p>0$, we have $N-q\tau=m+(p-1)\tau$, $N-(q-1)\tau=m+p\tau$, and hence $D=p^2\tau^2+2p(m-2)\tau+m\big(m-\frac{4p}{p-1}\big)$. If $m\ge\frac{4p}{p-1}$, all coefficients are nonnegative and $D>0$. Otherwise the constant term is negative, so $D$ has exactly one positive root in $\tau$, and $D\ge0$ if and only if $\tau$ is at least this root. Since $\tau$ decreases in $q$, this means $q\le q_c(N,p)$ by \eqref{eq:qc}. If $N=p>2$, then $N-q\tau=\tau(p-1)$, $N-(q+1)\tau=\tau(p-2)>0$, and $D=p\tau(p\tau-4)$, which is nonnegative if and only if $q\le\frac{(p-2)^2}4$.

\begin{proof}[Proof of Theorem~\ref{thm:sharp}(a)]
Let $v=v_a$ be the solution of $(r^{N-1}\varphi_p(v'))'=r^{N-1}v^{-q}$ on $(0,\infty)$ with $v(0)=a$ and $v'(0)=0$. It exists for all $r>0$, is positive and unique \cite[Theorem~2.1]{GM16}, and $v'>0$ on $(0,\infty)$. By Lemma~\ref{lem:radial}, $u(x)=v(H_0(x))$ solves $\Delta_p^Hu=u^{-q}$ in $\mathbb{R}^N\setminus\{0\}$. Integrating the ODE from $0$ to $r$ and using $v(r)\to a$ gives $v'(r)=(a^{-q}r/N)^{1/(p-1)}(1+o(1))$ as $r\to0$. Since $\nabla H_0$ is bounded on $\mathbb{R}^N\setminus\{0\}$ by homogeneity, $\nabla u(x)\to0$ as $x\to0$, so $u\in C^1(\mathbb{R}^N)$. Moreover, $\mathcal A(\nabla u)=\varphi_p(v'(r))x/r$ extends continuously by $0$ at the origin. Removing a small Wulff ball and letting its radius tend to zero therefore shows that $u$ is a weak solution in all of $\mathbb{R}^N$. Conversely, let $u=v(H_0(x))$ be a positive weak solution of class $C^1$. Then $v'(0)=0$, because $H_0$ is even and $u$ is differentiable at the origin. Testing the equation with $\psi(H_0(x))$, $\psi\in C^1_c((0,\infty))$, and using Lemma~\ref{lem:dual} and the coarea formula for $H_0$, we get $(r^{N-1}\varphi_p(v'))'=r^{N-1}v^{-q}$ in the sense of distributions on $(0,\infty)$. Hence $r^{N-1}\varphi_p(v')\in C^1((0,\infty))$ and $v$ solves the initial value problem, which proves uniqueness.

Let $z(r):=\tau v(r)-rv'(r)$. Since $\omega^{-\tau}v(\omega r)$ is a solution of the same ordinary differential equation for every $\omega>0$, differentiation in $\omega$ at $\omega=1$ gives
\begin{equation}\label{eq:lin}
(p-1)\big(r^{N-1}|v'|^{p-2}z'\big)'=-q\,r^{N-1}v^{-q-1}z\quad\text{on }(0,\infty),\qquad r^{N-1}|v'|^{p-2}z'(r)\to0\ \text{as }r\to0 .
\end{equation}
By Lemma~\ref{lem:z} and the remark after it, $z>0$ on $[0,\infty)$. Since $z\in C^1([0,\infty))$ with $z'(0)=0$, the function $W(x):=z(H_0(x))$ is of class $C^1$ and positive.

Since $\nabla W=z'(r)\nabla H_0$ is parallel to $\nabla u=v'(r)\nabla H_0$, \eqref{eq:homog} and Lemma~\ref{lem:dual} give $D^2H(\nabla u)\nabla W=0$, $\langle\nabla H(\nabla u),\nabla W\rangle=z'(r)$, and
\[
\mathcal M(\nabla u)\nabla W=(p-1)|v'(r)|^{p-2}z'(r)\,\frac{x}{r} .
\]
The function $G:=(p-1)|v'|^{p-2}z'$ satisfies $r^{N-1}G\in C^1((0,\infty))$ by \eqref{eq:lin}, so \eqref{eq:div} and \eqref{eq:lin} give $-\operatorname{div}(\mathcal M(\nabla u)\nabla W)=qu^{-q-1}W$ in $\mathbb{R}^N\setminus\{0\}$, classically and weakly. Lemma~\ref{lem:picone} with $\Omega=\mathbb{R}^N\setminus\{0\}$ and $V=qu^{-q-1}$ gives the stability inequality for $\varphi\in C^1_c(\mathbb{R}^N\setminus\{0\})$. A general $\varphi\in C^1_c(\mathbb{R}^N)$ is the limit of $\varphi_\rho:=\varphi(1-\eta_\rho)$ as $\rho\to0$. Since $|\mathcal M(\nabla u)|$ and $u^{-q-1}$ are bounded near $0$, and $N\ge3$, we have $\int|\mathcal M(\nabla u)|\varphi^2|\nabla\eta_\rho|^2\le C\rho^{N-2}\to0$. The mixed term in $Q_u(\varphi_\rho)$ tends to zero by the Cauchy--Schwarz inequality for the positive semidefinite matrix $\mathcal M(\nabla u)$, while the remaining terms converge by dominated convergence. Thus both sides of the stability inequality pass to the limit. Finally, Lemma~\ref{lem:z} gives $v(r)>c_sr^\tau$, hence $u_a(x)>U_q(x)$ for $x\ne0$.
\end{proof}

\begin{proof}[Proof of Theorem~\ref{thm:sharp}(b)]
Let $r=H_0(x)$ and $c:=p^{p-1}(N-p)$. Since $|\nabla U_e|=p|\nabla H_0|/r\le C/r$ and $N>p$, we have $U_e\in W^{1,p}_{\rm loc}(\mathbb{R}^N)$ and $e^{U_e}=cr^{-p}\in L^1_{\rm loc}(\mathbb{R}^N)$. By Lemma~\ref{lem:radial} with $V(r)=\log c-p\log r$, $\mathcal A(\nabla U_e)=-p^{p-1}r^{1-p}x/r$ and $-\Delta_p^HU_e=p^{p-1}r^{1-N}(r^{N-p})'=e^{U_e}$ in $\mathbb{R}^N\setminus\{0\}$. Since $|\mathcal A(\nabla U_e)|\le Cr^{1-p}$, the flux of $\mathcal A(\nabla U_e)$ through $\{H_0=\rho\}$ is $O(\rho^{N-p})\to0$, so $U_e$ is a weak solution in $\mathbb{R}^N$.

If $N<\frac{p(p+3)}{p-1}$, then $U_e$ is not stable by Corollary~\ref{cor:stable}(c). Let $N\ge\frac{p(p+3)}{p-1}$, put $b:=\frac{N-p}2$ and $W:=H_0^{-b}$. Here $\nabla U_e=-pr^{-1}\nabla H_0$ and $\nabla W=-br^{-b-1}\nabla H_0$ are parallel, $H(\nabla U_e)=p/r$, and $\nabla H(\nabla U_e)=-x/r$. Hence $D^2H(\nabla U_e)\nabla W=0$, $\langle\nabla H(\nabla U_e),\nabla W\rangle=br^{-b-1}$, and
\[
\mathcal M(\nabla U_e)\nabla W=-(p-1)p^{p-2}b\,r^{1-p-b}\,\frac xr .
\]
By \eqref{eq:div}, in $\mathbb{R}^N\setminus\{0\}$
\[
-\operatorname{div}\big(\mathcal M(\nabla U_e)\nabla W\big)=(p-1)p^{p-2}b\,r^{1-N}\big(r^{N-p-b}\big)'=(p-1)p^{p-2}\frac{(N-p)^2}4\,r^{-p}W .
\]
The inequality $(p-1)p^{p-2}\frac{(N-p)^2}4\ge p^{p-1}(N-p)$ is equivalent to $(p-1)(N-p)\ge4p$, that is, to $N\ge\frac{p(p+3)}{p-1}$. Thus $-\operatorname{div}(\mathcal M(\nabla U_e)\nabla W)\ge e^{U_e}W$ in $\mathbb{R}^N\setminus\{0\}$, and Lemma~\ref{lem:picone} with $\Omega=\mathbb{R}^N\setminus\{0\}$ gives $Q_{U_e}(\varphi)\ge\int e^{U_e}\varphi^2$ for $\varphi\in C^1_c(\mathbb{R}^N\setminus\{0\})$.

It remains to remove the origin. Let $\varphi\in C^1_c(\mathbb{R}^N)$ and $\varphi_\rho:=\varphi(1-\eta_\rho)$. Since $|\mathcal M(\nabla U_e)|\le Cr^{2-p}$,
\[
\int|\mathcal M(\nabla U_e)|\,\varphi^2|\nabla\eta_\rho|^2\le C\rho^{-2}\int_{B_{2\rho}}|x|^{2-p}\le C\rho^{N-p}\to0,
\]
and $r^{2-p}|\nabla\varphi|^2$ and $e^{U_e}\varphi^2$ are integrable near $0$ because $N>p$. The mixed term in $Q_{U_e}(\varphi_\rho)$ tends to zero by Cauchy--Schwarz for $\mathcal M(\nabla U_e)\ge0$. Hence $Q_{U_e}(\varphi_\rho)\to Q_{U_e}(\varphi)$ and $\int e^{U_e}\varphi_\rho^2\to\int e^{U_e}\varphi^2$, and $U_e$ is stable.
\end{proof}

\begin{proof}[Proof of Theorem~\ref{thm:sharp}(c)]
Let $r=H_0(x)$, $N=p$, and $\beta:=(\tau-1)(p-2)$. Since $\tau(q+p-1)=p$, we have $p-q\tau=\tau(p-1)$ and $\beta-2=-\tau(q+1)$. By Lemma~\ref{lem:radial} with $V(r)=c_sr^\tau$,
\[
\mathcal A(\nabla U_q)=(c_s\tau)^{p-1}r^{(\tau-1)(p-1)}\frac xr,\qquad \Delta_p^HU_q=(c_s\tau)^{p-1}(N-q\tau)r^{-q\tau}=U_q^{-q}
\]
in $\mathbb{R}^N\setminus\{0\}$, where we used $c_s^{q+p-1}\tau^{p-1}(N-q\tau)=1$. Since $N+p(\tau-1)=p\tau>0$ and $q\tau<p=N$, we have $U_q\in W^{1,p}_{\rm loc}(\mathbb{R}^N)$ and $U_q^{-q}\in L^1_{\rm loc}(\mathbb{R}^N)$, and the flux of $\mathcal A(\nabla U_q)$ through $\{H_0=\rho\}$ is $O(\rho^{N-1+(\tau-1)(p-1)})=O(\rho^{\tau(p-1)})\to0$. Hence $U_q$ is a weak solution in $\mathbb{R}^N$, and it is positive away from the origin.

If $q>\frac{(p-2)^2}4$, then $U_q$ is not stable by Corollary~\ref{cor:stable}(b). Let $q\le\frac{(p-2)^2}4$, put $b:=\frac{N-2+\beta}2=\frac{\tau(p-2)}2>0$ and $W:=H_0^{-b}$. Here $H(\nabla U_q)=c_s\tau r^{\tau-1}$, $\nabla H(\nabla U_q)=x/r$, and $\nabla W=-br^{-b-1}\nabla H_0$ is parallel to $\nabla U_q$. Hence $D^2H(\nabla U_q)\nabla W=0$ and $\mathcal M(\nabla U_q)\nabla W=-(p-1)(c_s\tau)^{p-2}b\,r^{\beta-b-1}\frac xr$, and \eqref{eq:div} gives
\[
-\operatorname{div}\big(\mathcal M(\nabla U_q)\nabla W\big)=(p-1)(c_s\tau)^{p-2}b\,(N-2+\beta-b)\,r^{\beta-b-2}=(p-1)(c_s\tau)^{p-2}b^2\,r^{-\tau(q+1)}\,W
\]
in $\mathbb{R}^N\setminus\{0\}$. On the other hand, $qU_q^{-q-1}W=qc_s^{-q-1}r^{-\tau(q+1)}W$, and $c_s^{-q-1}=c_s^{p-2}\tau^{p-1}(p-q\tau)=c_s^{p-2}\tau^p(p-1)$. Thus $-\operatorname{div}(\mathcal M(\nabla U_q)\nabla W)\ge qU_q^{-q-1}W$ if and only if
\[
(p-1)\tau^{p-2}\frac{\tau^2(p-2)^2}4\ge q\tau^p(p-1),\qquad\text{that is,}\qquad q\le\frac{(p-2)^2}4 .
\]
Lemma~\ref{lem:picone} with $\Omega=\mathbb{R}^N\setminus\{0\}$ and $V=qU_q^{-q-1}$ gives the stability inequality for $\varphi\in C^1_c(\mathbb{R}^N\setminus\{0\})$. To remove the origin, let $\varphi\in C^1_c(\mathbb{R}^N)$ and $\varphi_\rho=\varphi(1-\eta_\rho)$. Since $|\mathcal M(\nabla U_q)|\le Cr^\beta$ and $N-2+\beta=\tau(p-2)>0$,
\[
\int|\mathcal M(\nabla U_q)|\varphi^2|\nabla\eta_\rho|^2\le C\rho^{-2}\int_{B_{2\rho}}|x|^\beta\le C\rho^{\tau(p-2)}\to0,
\]
while $|x|^\beta|\nabla\varphi|^2$ and $U_q^{-q-1}\varphi^2\le C|x|^{\beta-2}\varphi^2$ are integrable near $0$. Again, the mixed term tends to zero by Cauchy--Schwarz for $\mathcal M(\nabla U_q)\ge0$. Letting $\rho\to0$, we conclude that $U_q$ is stable.
\end{proof}

\section*{Declarations}

\noindent\textbf{Conflict of interest.} The authors declare that they have no conflict of interest.

\smallskip
\noindent\textbf{Data availability.} Data sharing is not applicable to this article, as no datasets were generated or analyzed during the current study.

\smallskip
\noindent\textbf{Use of AI tools.} AI-assisted tools were used during the preparation of the manuscript. The authors reviewed and edited all AI-assisted output and takes full responsibility for the content of the work.

\end{document}